\definecolor{PTgreen}{HTML}{228833}
\definecolor{PTred}{HTML}{CC3311}
\apptocmd{\sloppy}{\hbadness 10000\relax}{}{}
\apptocmd{\sloppy}{\vbadness 10000\relax}{}{}
\numberwithin{equation}{section}
\numberwithin{figure}{section}
\newtheorem{lemma}{Lemma}[section]
\newtheorem{prop}[lemma]{Proposition}
\newtheorem{thm}[lemma]{Theorem}
\newtheorem{cor}[lemma]{Corollary}
\theoremstyle{definition}
\newtheorem{rmk}[lemma]{Remark}
\newtheorem{defn}[lemma]{Definition}
\newtheorem{eg}[lemma]{Example}
\newcommand{\N}{\mathbb{N}}
\newcommand{\R}{\mathbb{R}}
\newcommand{\sphere}{\mathbb{S}}
\newcommand{\ra}{\rightarrow}
\newcommand{\da}{\downarrow}
\newcommand{\dist}{\mathop\mathrm{dist}\nolimits}
\newcommand{\norm}[1]{\left \lVert #1 \right \rVert}
\newcommand{\abs}[1]{\left \lvert #1 \right \rvert}
\newcommand{\divv}{\mathrm{div}}
\newcommand{\ol}{\overline}
\renewcommand{\Im}{\mathrm{Im}}
\newcommand{\HCP}[1]{\mathfrak{H}_{#1}}
\newcommand{\nin}{n^+_{\mathrm{in}}}
\newcommand{\nout}{n^+_{\mathrm{out}}}
\newcommand{\nodal}{\mathcal{N}}
\renewcommand{\epsilon}{\varepsilon}
\begin{document}

\title[Nodal domains of homogeneous caloric polynomials]{On the number of nodal domains of homogeneous caloric polynomials}
\author{Matthew Badger}
\author{Cole Jeznach}
\thanks{M.~Badger was partially supported by NSF DMS grant 2154047. C.~Jeznach was partially supported by Simons Collaborations in MPS grant 563916 and NSF DMS grant 2000288.}
\subjclass[2020]{Primary: 35K05. Secondary: 26C05, 26C10, 35R35.}
\keywords{heat equation, caloric polynomial, nodal domain, free boundary regularity}
\address{Department of Mathematics, University of Connecticut, Storrs, CT 06269-1009.}
\email{matthew.badger@uconn.edu}
\address{Departament de Matem\`{a}tiques, Universitat Aut\`{o}noma de Barcelona, Bellaterra, 08193, Spain}
\email{colejeffrey.jeznach@uab.cat} 
\date{October 21, 2025}
\addtocontents{toc}{\protect\setcounter{tocdepth}{2}}

\begin{abstract}
We investigate the minimum and maximum number of nodal domains across all time-dependent homogeneous caloric polynomials of degree $d$ in $\mathbb{R}^{n}\times\mathbb{R}$ (space $\times$ time), i.e., polynomial solutions of the heat equation satisfying $\partial_t p\not\equiv 0$ and $$p(\lambda x, \lambda^2 t) = \lambda^d p(x,t)\quad\text{for all $x \in \mathbb{R}^n$, $t \in \mathbb{R}$, and $\lambda > 0$.}$$ When $n=1$, it is classically known that the number of nodal domains is precisely $2\lceil d/2\rceil$. When $n=2$, we prove that the minimum number of nodal domains is 2 if $d\not \equiv 0\pmod 4$ and is 3 if $d\equiv 0\pmod 4$. When $n\geq 3$, we prove that the minimum number of nodal domains is $2$ for all $d$. Finally, we show that the maximum number of nodal domains is $\Theta(d^n)$ as $d\rightarrow\infty$ and lies between $\lfloor \frac{d}{n}\rfloor^n$ and $\binom{n+d}{n}$ for all $n$ and $d$. As an application and motivation for counting nodal domains, we confirm existence of the singular strata in Mourgoglou and Puliatti's two-phase free boundary regularity theorem for caloric measure.
\end{abstract}

\maketitle

\vspace{-.5in}

\tableofcontents

\section{Introduction}\label{s:intro}

A general motivation for studying caloric polynomials comes from their ubiquity in the theory of parabolic PDEs as finite order solutions of the heat equation. After showing that tangent functions of solutions to a parabolic PDE are homogeneous caloric polynomials (hereafter abbreviated \emph{hcps}), one can deduce strong unique continuation principles and estimate the dimension of nodal and singular sets of solutions \cite{Chen98}. In a similar vein, it was recently shown that zero sets of hcps appear as the supports of tangent measures in non-variational free boundary problems for caloric measure \cite{MP21}. Hcps also arise in geometric contexts such as understanding the dimension of ancient caloric functions on manifolds with polynomial growth \cite{CM21}.

In this paper, with a goal of confirming existence of singular strata in the aforementioned free boundary regularity problem for caloric measure (see \S\ref{ss:2-phase}), we investigate basic topology of nodal domains of hcps in $\R^{n+1}\equiv \R^n\times \R$ (space $\times$ time). In particular, for each ambient dimension and for each parabolic degree (see \S\ref{ss:defs}), we would like to determine the minimum and maximum number of possible nodal domains. This type of question has been studied extensively for spherical harmonics (see \S\ref{ss:harmonic} for a brief survey). Also, up to scaling by a constant, for each degree $d$, there is a unique hcp in $\R^{1+1}$ of degree $d$ and the number of nodal domains is precisely $2\lceil d/2\rceil$ (see \S\ref{sec:prop}). Thus, the problem is to determine what happens for time-dependent hcps in at least two space variables. We fully determine the minimum number of domains (with detailed constructions) and establish asymptotic bounds for the maximum number of domains. Notice that by a straight-forward application of the mean-value property for caloric functions, we know that if a non-constant solution to the heat equation has a zero, then the number of nodal domains is at least two (see Remark \ref{r:meanvalue}).
\begin{thm}[minimum number of nodal domains]
\label{thm:main}
When $n\geq 2$,  the minimum number $m_{n,d}$ of nodal domains of time-dependent homogeneous caloric polynomials in $\R^{n+1}$ of degree $d\geq 2$ satisfies (see Figure \ref{fig:mathematica_graphics})
\begin{equation}m_{2,d} = \begin{cases}
2, &  \text{when }d \not \equiv 0 \pmod 4, \\
3, &  \text{when }d \equiv 0 \pmod 4.
\end{cases}\end{equation}
When $n\geq 3$, we have $m_{n,d}=2$ for all $d\geq 2$.
\end{thm}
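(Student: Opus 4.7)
The lower bound $m_{n,d} \geq 2$ is immediate, since a non-constant polynomial takes both positive and negative values which are separated by its nodal set. The baseline example $p(x,t) = |x|^2 + 2nt$, whose nodal set is a single paraboloid, gives $m_{n,2} = 2$ for all $n \geq 2$. Thus the theorem reduces to two tasks: (i) constructing time-dependent hcps realizing the claimed minimum in each remaining case, and (ii) proving the sharpened lower bound $m_{2,d} \geq 3$ when $d \equiv 0 \pmod 4$.

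\textbf{Constructions.} For the exceptional case $n = 2$, $d \equiv 0 \pmod 4$, I would use the caloric extension of the radial polynomial $|x|^d$. For $d = 4$ this is $p = |x|^4 + 16t|x|^2 + 32 t^2$; substituting $u = |x|^2$ reduces the zero set to a quadratic in $u$ with two positive roots for each $t < 0$, producing two nested parabolic loops in $\{t \leq 0\}$. A sign analysis then yields exactly three nodal domains: an inner ``bubble'' where $p > 0$ trapped at the origin (isolated because the inner loop pinches to a point as $t \to 0^-$), a connected intermediate region where $p < 0$, and the unbounded exterior where $p > 0$. For larger $d = 4k$, a similar (or suitably modified) radial construction yields the same three-domain structure. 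For the remaining cases (all $d \geq 3$ in $n \geq 3$, and $d \not\equiv 0 \pmod 4$ in $n = 2$), I would exhibit explicit time-dependent hcps with exactly $2$ nodal domains. Useful ingredients include: (a) products $u(x,t)v(x,t)$ of caloric polynomials with pointwise orthogonal gradients (which remain caloric by the Leibniz formula for the heat operator), such as $1$D Hermite heat polynomials in disjoint spatial variables; (b) perturbations of the $1$D Hermite $H_d(x_1,t)$ by lower-order caloric terms involving additional space variables, used to collapse otherwise-trapped components; and (c) in $n \geq 3$, graph-type polynomials whose zero set is a single hypersurface, with the extra spatial directions providing room to route paths avoiding the zero set that merge regions which would be forced apart in $n = 2$.

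\textbf{The hard lower bound.} The main obstacle is proving $m_{2,d} \geq 3$ for $d \equiv 0 \pmod 4$. By parabolic homogeneity, the number of nodal domains of $p$ equals the number of connected components of $\Sigma \setminus \{p = 0\}$, where $\Sigma = \{(x,t) \in \R^{2+1} : |x|^4 + t^2 = 1\}$ is topologically a $2$-sphere. Suppose for contradiction that $p$ has exactly $2$ nodal domains, and first consider the subcase where $q = p(\cdot, 0)$ is positive-definite on $\R^2$. A Fourier expansion on $S^1$ shows that $\Delta^{d/2}q$ is a positive constant (equal to a positive multiple of the mean of $q$ on $S^1$), and because $d/2$ is even when $d \equiv 0 \pmod 4$ we get $p(0,t) = t^{d/2}\Delta^{d/2}q/(d/2)! > 0$ for all $t \neq 0$. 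Hence both ``poles'' $(0, \pm 1)$ and the ``equator'' $\{t = 0, |x| = 1\}$ lie in $\{p > 0\}$, forcing the putative single component $\{p < 0\}$ to be a disc on $\Sigma$ avoiding all three. A discriminant analysis of $s \mapsto p(x, -s)$ (a quadratic in $s$ for $d = 4$, a polynomial of degree $d/2$ in general) shows that the leading discriminant $(\Delta q)^2 - 2\Delta^2 q \cdot q$ has strictly positive mean over $\theta \in S^1$, so real zeros in $s$ exist on a positive-measure set of directions and organize into an annular nodal structure in the lower hemisphere; this traps an interior component where $p > 0$ near the origin, producing a third nodal domain and contradicting the assumption. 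Cases where $q$ is sign-indefinite or vanishes on $S^1$ are handled by analogous arguments, using that the zero rays of $q$ contribute additional radial components of $\{p = 0\}$ that only increase the count. The chief technical difficulty---and where the proof will concentrate most of its effort---is making this Fourier/discriminant argument precise and uniform over all $d = 4k$ and all non-harmonic degree-$d$ polynomials $q$ on $\R^2$.
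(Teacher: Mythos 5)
Your overall skeleton (reduce to constructions plus the $m_{2,4k}\geq 3$ lower bound) matches the paper, but there are genuine gaps in both halves that prevent this from being a proof.

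\textbf{Gap 1: the radial construction fails for $d=4k$ with $k\geq 2$.} Your degree-$4$ example $p = |x|^4 + 16t|x|^2 + 32t^2$ is correct and does have exactly three nodal domains. But the caloric extension of $|x|^{4k}$ in $\R^2$ has, for every fixed $t<0$, a slice $p(\cdot,t)$ whose zero set is a union of $2k$ concentric circles (its restriction is a rescaled Laguerre-type polynomial in $|x|^2$ with $2k$ positive roots). These circles all shrink to the origin as $t\to 0^-$, so all $2k$ bounded cone-like regions between consecutive circles are pinched off at the origin and are pairwise disconnected, giving $2k+1$ nodal domains, not $3$. So the phrase ``a similar (or suitably modified) radial construction'' hides exactly the hard step: the paper instead starts from $p_{2k}(x,t)\,p_{2k}(y,t)$ (a \emph{non}-radial polynomial with many domains) and uses a Lewy-type perturbation $+\,\epsilon\, p_{2k+1}(x\cos\alpha-y\sin\alpha,t)\,p_{2k-1}(x\sin\alpha+y\cos\alpha,t)$ to merge all but three of them; controlling that perturbation near the singular crossings of the nodal set is where most of the technical work lives (Lemma~\ref{l:graph}).

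\textbf{Gap 2: the lower-bound argument covers only one subcase and is vague even there.} Your Fourier computation is correct as far as it goes: for $q=p(\cdot,0)$ homogeneous of degree $d$ in $\R^2$, $\Delta^{d/2}q$ is $(d!!)^2$ times the mean of $q$ over $S^1$, so $q>0$ on $S^1$ implies $\Delta^{d/2}q>0$ and (when $d/2$ is even) $p(0,t)>0$ for $t\neq 0$. But this only bites when $p$ actually has a $t^{d/2}$ term, i.e.\ when the harmonic leading coefficient $p_m$ is a nonzero constant. If $\deg p_m\geq 2$ then $\Delta^{d/2}q=0$ and both poles lie \emph{in} the nodal set, and your argument collapses. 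That case is not degenerate — it occurs for most $p$ — and the paper handles it by an entirely separate stereographic-projection argument (Proposition~\ref{prop:low_bd}, via Lemmas~\ref{lem:cont_comp} and~\ref{l:alternate}), giving the much stronger bound $\nodal(p)\geq 2\deg p_m\geq 4$. In the remaining case $p = t^{2k} + \cdots + p_0$, the paper uses a short topological argument that exploits the symmetry $p(x,y,t)=p(-x,-y,t)$ (each $p_i$ has even degree): if $\{p>0\}$ were connected, a path $\gamma$ from the north to south pole together with its reflection $R\gamma$ would form a Jordan curve separating any negative point $Q$ from $R(Q)$, so $\{p<0\}$ is disconnected (Proposition~\ref{prop:low_bd_2}). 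That symmetry is the key ingredient your discriminant argument does not see, and consequently you are forced into a delicate quantitative analysis (``annular nodal structure,'' positivity of a mean discriminant, trapping a component) that you do not carry out. You also defer the sign-indefinite/degenerate-$q$ cases to ``analogous arguments,'' but it is not clear these arguments exist in the form you describe, precisely because the positivity of $p$ at the poles is lost.

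\textbf{Smaller issues.} The constructions for $d\not\equiv 0\pmod 4$ in $n=2$ and for $n\geq 3$ are sketched only in outline; ingredient~(a) (products of orthogonal-gradient caloric polynomials) actually produces \emph{many} nodal domains and is used by the paper to lower-bound $M_{n,d}$, not $m_{n,d}$; ingredient~(b) (``perturbations by lower-order caloric terms'') cannot preserve parabolic homogeneity of fixed degree as stated. The $n\geq 3$ idea (c) is in the right spirit: the paper proves $m_{n,d}=2$ for $n\geq 3$ by showing $u(x,y,z,t)=\phi(x,y)+\psi(z,t)$, with $\phi$ an hhp in $\R^2$ and $\psi$ the degree-$d$ hcp in $\R^{1+1}$, has $\{u>0\}$ and $\{u<0\}$ path-connected, using monotonicity of $|\phi|,|\psi|$ along (parabolic) rays from the origin. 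Making any of these sketches into a proof requires either that explicit construction or the perturbation machinery you allude to but do not supply.
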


 \begin{figure}
\begin{center}\includegraphics[width=.3\textwidth]{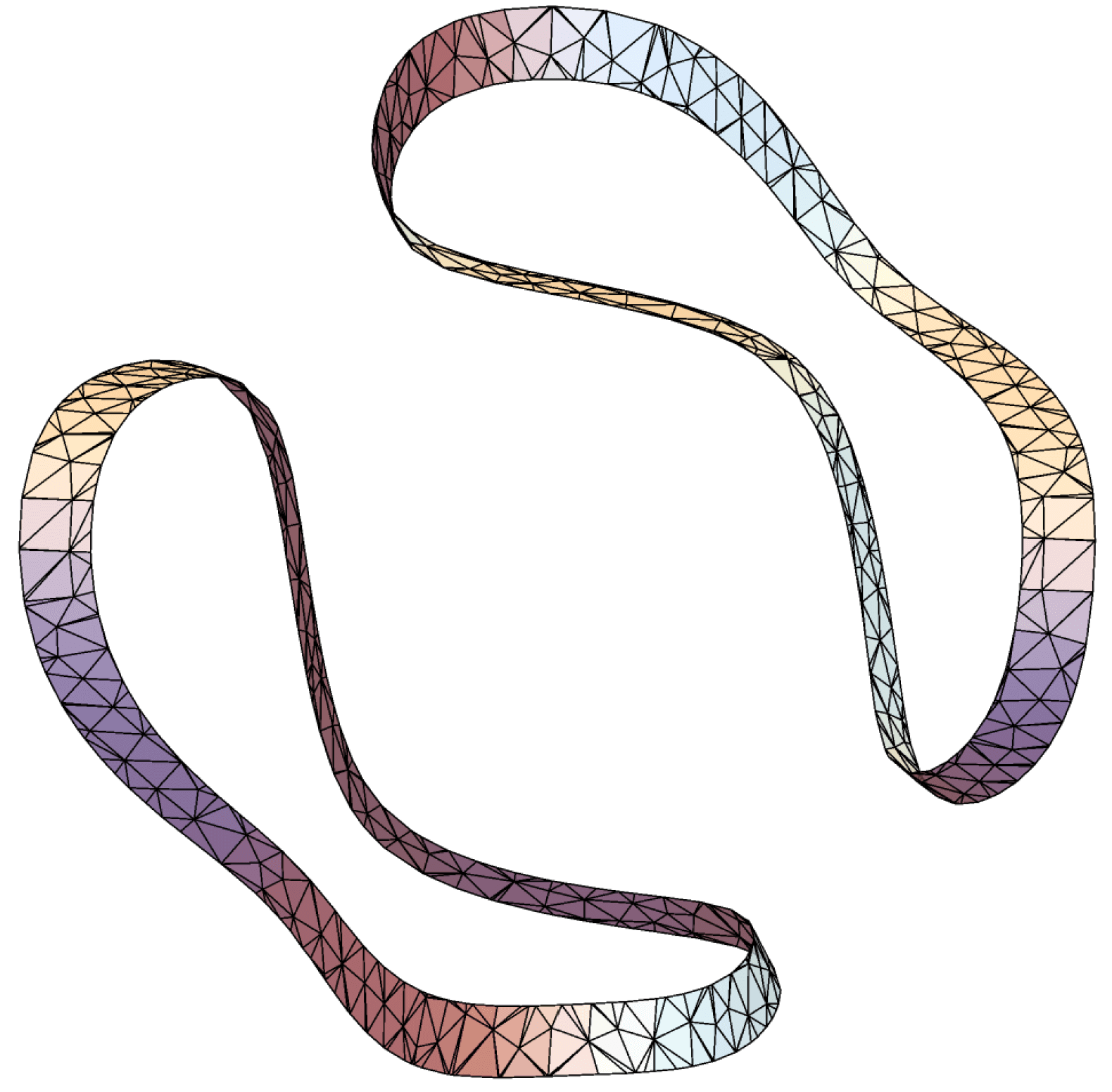}\hfill \includegraphics[width=.3\textwidth]{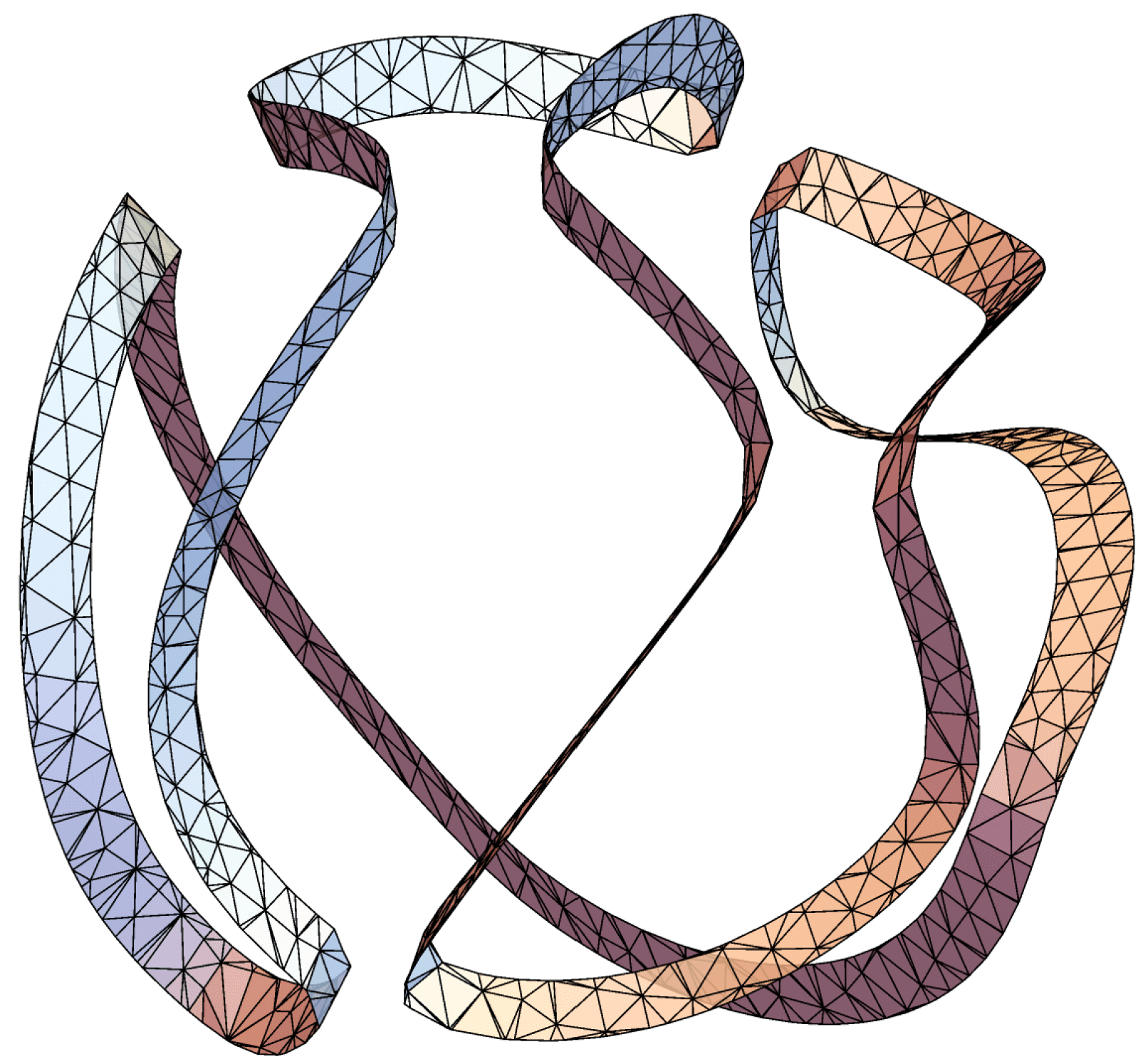}
\hfill\includegraphics[width=.3\textwidth]{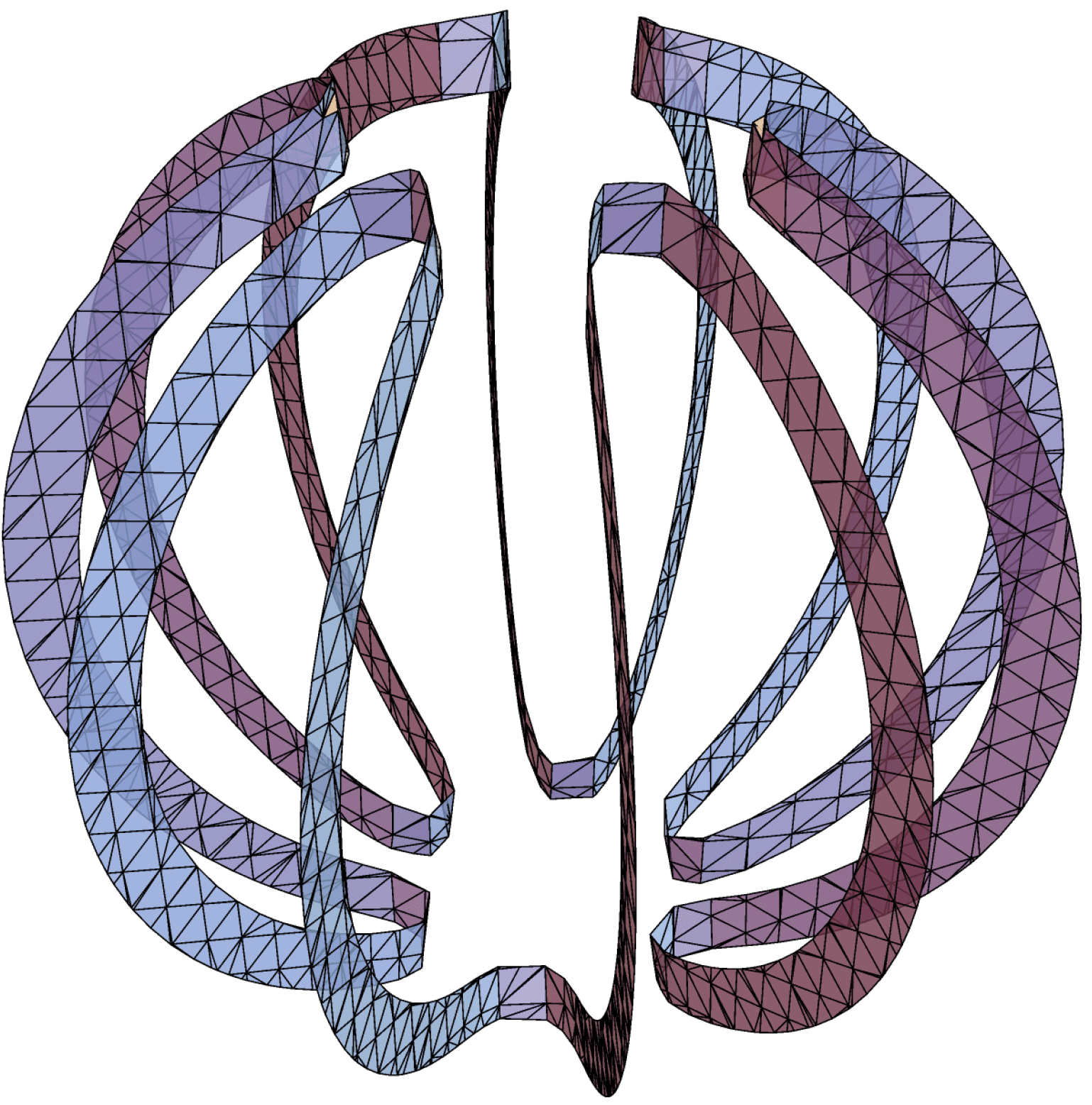}\end{center}\caption{Gallery of nodal sets of homogeneous caloric polynomials $u$ in \texorpdfstring{$\R^{2+1}$}{2+1 dimensions} achieving the minimum number $m_{2,d}$ of nodal domains. For increased visibility, we show the intersection of the full nodal set with an annulus: $\{ (x,y,t) \, : \, u(x,y,t) = 0  \} \cap (B_1 \setminus B_{1-\delta})$. Left: \eqref{eqn:u_0_mod_4} with $d=4$, $\epsilon = 0.2$, $\alpha = \pi/10$. Middle: \eqref{eqn:d_odd_u} with $d = 5$, $\epsilon =0.3$,  $\alpha = \pi/10$. Right: \eqref{eqn:lewy_u} with $d = 6$, $\epsilon = 0.05$.}\label{fig:mathematica_graphics}\end{figure}

We prove the case $n\geq 3$ of Theorem \ref{thm:main} in \S\ref{sec:high_dim} by modifying a construction from \cite{BET17} for homogeneous harmonic polynomials (hereafter abbreviated \emph{hhps}). The case $n=2$ is established in \S\S\ref{sec:dim_3_low} and \ref{sec:dim_3_constr}, partly by employing the perturbation technique from \cite{Lewy77}. Notably, the proof that $m_{2,4k}=3$ is the most difficult argument in the paper, and requires first proving a lower bound that exploits the structure of such polynomials (Corollary \ref{cor:0_mod_4}), and constructing an example that attains this lower bound (Theorem \ref{thm:0mod4}).

\begin{thm}[maximum number of nodal domains] \label{thm:max-asymptotics} For all $n\geq 2$, the maximum number $M_{n,d}$ of nodal domains of time-dependent homogeneous caloric polynomials in $\R^{n+1}$ of degree $d$ is $\Theta(d^n)$ as $d\rightarrow\infty$, i.e., there is a constant $C >0 $ so that $C^{-1} d^n \le M_{n,d} \le C d^n$ for all $d$ sufficiently large. More precisely, for all $n\geq 2$ and $d\geq 2$,
\begin{equation}
 \left\lfloor \frac{d}{n}\right\rfloor^n \le M_{n,d} \le \binom{n+d}{n}. \label{thebounds}
\end{equation}
\end{thm}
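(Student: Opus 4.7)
For the lower bound $M_{n,d}\ge\lfloor d/n\rfloor^n$, I would construct an explicit time-dependent hcp as a product of one-dimensional caloric polynomials in disjoint space variables. Writing $d=nd'+r$ with $d'=\lfloor d/n\rfloor$ and $0\le r<n$, and letting $h_m(z,t)$ denote the one-dimensional heat polynomial of degree $m$, set $d_1=d'+r$, $d_i=d'$ for $i\ge 2$, and $p(x,t)=\prod_{i=1}^n h_{d_i}(x_i,t)$. A direct computation
\[
\partial_t p=\sum_i(\partial_t h_{d_i})\prod_{j\ne i}h_{d_j}=\sum_i(\partial_{x_i}^2 h_{d_i})\prod_{j\ne i}h_{d_j}=\Delta_x p
\]
shows $p$ is caloric; parabolic homogeneity of degree $d$ is immediate, and $p$ is time-dependent whenever $d\ge n+1$ (the cases $d\le n$ reduce to the trivial bound $\lfloor d/n\rfloor^n\le 1$, which can also be recovered by an embedding like $p(x,t)=h_d(x_1,t)$). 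To count nodal domains, each nodal domain $\Omega$ of $p$ projects under $(x,t)\mapsto(x_i,t)$ to a connected subset of $\{h_{d_i}\ne 0\}$, hence into a unique 2D nodal domain of $h_{d_i}$; this defines a map from nodal domains of $p$ to $n$-tuples of 2D nodal domains of the factors, and distinct realized tuples force distinct nodal domains. For any $t_0<0$, each 2D nodal domain of each $h_{d_i}$ is a parabolic cone with nonempty $\{t<0\}$ portion and hence meets $\{t=t_0\}$, so every tuple is realized. Consequently
\[
M_{n,d}\ge\prod_{i=1}^n 2\lceil d_i/2\rceil\ge\prod_{i=1}^n d_i=(d'+r)(d')^{n-1}\ge (d')^n=\lfloor d/n\rfloor^n.
\]

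For the upper bound $M_{n,d}\le\binom{n+d}{n}$, I would first exploit parabolic homogeneity: each nodal domain is a parabolic cone invariant under $(x,t)\mapsto(\lambda x,\lambda^2 t)$, so the nodal domains of $p$ are in bijection with the nodal components of $p$ restricted to a parabolic unit sphere such as $\Sigma=\{|x|^4+t^2=1\}$, a compact topological $n$-sphere meeting each parabolic ray in a single point. The key algebraic input is the identity
\[
\dim V_d=\sum_{d'=0}^d\binom{n+d'-1}{n-1}=\binom{n+d}{n},
\]
where $V_d$ is the space of hcps in $\R^{n+1}$ of parabolic degree at most $d$; the summand is the dimension of homogeneous polynomials of degree $d'$ in the $n$ space variables, each of which extends uniquely to a caloric polynomial of parabolic degree $d'$ via $p(x,t)=\sum_{k\ge 0}(t^k/k!)\Delta^k p_0(x)$, and the sum collapses by the hockey-stick identity. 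A Courant-style argument---producing $N$ linearly independent elements of $V_d$ from $N$ nodal domains, for instance by showing that evaluation functionals at one chosen point per nodal domain are independent on $V_d$---then yields $M_{n,d}\le\dim V_d$. The asymptotic $M_{n,d}=\Theta(d^n)$ follows at once, since both $\lfloor d/n\rfloor^n$ and $\binom{n+d}{n}$ grow like $d^n$ as $d\to\infty$.

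The main obstacle will be making the Courant-style inequality rigorous in this non-variational setting, where $p$ is not an eigenfunction of a self-adjoint operator and the usual Rayleigh quotient machinery does not apply. An alternative I would pursue in parallel is to pass to a paraboloidal transversal: restricting $p$ to $\{t=1-|x|^2\}$ (and, for the $t<0$ nodal cones, an analogous paraboloid) pulls $p$ back to an ordinary polynomial of degree $d$ in $n$ variables, at which point one can invoke a classical Oleinik--Petrovsky / Milnor-type bound on nodal domains of polynomials to recover the estimate $\binom{n+d}{n}$, with some care to avoid double-counting domains that straddle $\{t=0\}$.
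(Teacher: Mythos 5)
Your lower bound is correct and is essentially the paper's argument: you build $p$ as a product of one-variable hcps with degrees summing to $d$, observe that two points whose projections to some $(x_i,t)$-plane lie in different nodal domains of the corresponding factor must lie in different nodal domains of the product, and verify that every tuple of factor nodal domains is realized by intersecting with a common negative time slice. The paper uses the same product and the same projection argument; your explicit tuple-realization step is a small but welcome clarification.

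The upper bound, however, has a genuine gap, and you flag it yourself. You correctly compute $\dim V_d=\binom{n+d}{n}$, but then you declare the setting non-variational and look for a way around Courant. In fact the paper's key observation is the opposite: the problem \emph{is} variational once you restrict to the time slice $\{t=-1\}$. If $p$ is an hcp of parabolic degree $d$, then $v(x)=p(x,-1)$ satisfies
\[
-\mathrm{div}\bigl(e^{-|x|^2/4}\nabla v\bigr)=\tfrac{d}{2}\,e^{-|x|^2/4}\,v,
\]
so $v$ is an eigenfunction of a self-adjoint operator on $L^2(\R^n,e^{-|x|^2/4}dx)$, and the slices $p_\alpha(\cdot,-1)$ of the basic hcps are an orthogonal eigenbasis, being tensor products of Hermite polynomials. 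Combined with the facts that every nodal domain of $p$ meets $\{t=-1\}$ and that the resulting map to nodal domains of $v$ is injective, the classical Courant Rayleigh-quotient argument applies on the slice and gives exactly $\binom{n+d}{n}$. Your two proposed substitutes do not close the argument: choosing one point per nodal domain does not make evaluation functionals on $V_d$ linearly independent (there is no such general principle), and the Oleinik--Petrovsky/Milnor bound on a paraboloidal restriction yields a weaker constant, roughly $(d+1)^{n-1}(d+2)$, which is off by a factor of about $n!$, and would also require a separate argument that nodal domains of $p$ inject into nodal domains of the restriction. The missing idea is therefore not a replacement for Courant; it is that Courant's argument applies verbatim after composing the hcp with the Hermite eigenstructure on a fixed negative time slice.
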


We prove Theorem \ref{thm:max-asymptotics} in \S\ref{sec:high_dim}. The lower bound on $M_{n,d}$ in \eqref{thebounds} is based on an elementary construction using  products of hcps in $\R^{1+1}$. The upper bound follows from an indirect application of Courant's nodal domain theorem, exploiting a connection between negative time-slices of caloric polynomials in $\R^{1+1}$ and Hermite orthogonal polynomials.

\begin{rmk} Classical theorems in algebraic geometry imply that the maximal {possible} number of nodal domains {for a} polynomial $p:\R^{n+1}\rightarrow\R$ of degree $d$ is $\Theta(d^{n+1})$ as $d\rightarrow\infty$ and lies between $\binom{d}{0}+\cdots+\binom{d}{n+1}$ and $(d+1)^n(d+2)$; see \cite[Proposition 2.4]{stanley-parkcity} for the lower bound (from hyperplane arrangements) and \cite[Theorem 3]{M64} for the upper bound. Thus, perhaps not unsurprisingly, the {asymptotics for the number of nodal domains of }hcps is distinct from those of general polynomials. Finding the exact value of $M_{n,d}$ appears to be a difficult problem. Except for some low degrees ($d\leq 6$), the corresponding problem for spherical harmonics in $\mathbb{R}^3$ is also open \cite{Leydold96}. \end{rmk}

We discuss proof strategies for the main theorems and outline the paper in \S\ref{ss:organize}.

\subsection{Definitions and examples} \label{ss:defs}

Let us decode the terminology in the statements of the main theorems.
We study the heat equation $(\partial_t - \Delta) u = 0$ in $\R^{n+1} = \{ (x,t) : x \in \R^n, t \in \R \}$, where $\Delta\equiv \Delta_x=\sum_1^n \partial_{x_i}\partial_{x_i}$. The natural notion of homogeneity in this setting is anisotropic.

\begin{defn}
A function $f(x,t)$ is \textbf{parabolically homogeneous} of degree $d \in \R $ if
\begin{align}
f(\lambda x, \lambda^2 t) & = \lambda^d f(x,t)\quad\text{for all $\lambda > 0$ and $(x,t)$.}
\end{align}
\end{defn}

\begin{defn}
A polynomial $p(x,t)$ is a \textbf{homogeneous caloric polynomial (hcp) of degree $d$} if $p$ satisfies the heat equation and is parabolically homogeneous{} of degree $d \in \N=\{0,1,2,\dotsc\}$. We say that $p$ is \textbf{time-dependent} if $\partial_t p \not\equiv 0$.
\end{defn}

\begin{eg}\label{eg:hcp} For any exponent $k\in \N$ and multi-index
$\alpha\in\N^n$, the monomial $t^k x^\alpha$ is parabolically homogeneous{} of degree $2k + \abs{\alpha}=2k+\alpha_1+\cdots+\alpha_n$. In particular, if $p(x,t)$ is parabolically homogeneous{} of degree $d \in \N$, then each monomial in $p(x,t)$ has the form $t^k x^\alpha$ where $2k + \abs{\alpha} = d$. Parabolic and algebraic homogeneity are distinct notions; e.g.,
\begin{equation}
p(x,t)=  t^2 +tx^2 + x^4/12
\end{equation}
is an hcp of degree $4$ in $\R^{1+1}$, but $p$ is not \emph{algebraically} homogeneous. Nevertheless, the parabolic degree and the algebraic degree of an hcp always coincide.
\end{eg}

\begin{rmk} If $p$ is a time-dependent hcp, then the degree of $p$ is at least 2.
\end{rmk}

\begin{defn}
The \textbf{nodal domains} of a continuous function $u(x,t)$ on $\R^{n+1}$ are the connected components of the set $\{(x,t)  :  u(x,t) \ne 0\}$. We let $\nodal(u) \in \N \cup \{+\infty\}$ denote the number of nodal domains of $u$. The \textbf{nodal set} of $u$ is $\{(x,t):u(x,t)=0\}$.
\end{defn}

\begin{rmk}\label{r:sphere} Let $\sphere^n=\{(x,t)\in\R^{n+1}:x_1^2+\cdots+x_n^2+t^2=1\}$. By parabolic homogeneity, if $u$ is an hcp of degree $d\geq 1$ in $\R^{n+1}$, then $\mathcal{N}(u)$ is the number of connected components of $\{(x,t)\in\sphere^n:u(x,t)\neq 0\}$ in $\sphere^n$.\end{rmk}

\begin{rmk}\label{r:meanvalue} For any $n\geq 1$, the mean value property (e.g., see \cite[p.~50]{Evans-PDE}) implies that a non-constant solution $u:\R^{n+1}\rightarrow \R$ of the heat equation takes positive and negative values in any neighborhood of a zero of $u$. In particular, $m_{n,d}\geq 2$ for all $n\geq 1$ and $d\geq 2$.\end{rmk}

\begin{eg}\label{eq:deg_2}
For any $1\leq j\leq n$, the polynomial $p(x,t) = 2t + x_j^2$ in $\R^{n+1}$ is an hcp of degree $2$. Moreover, $p(x,t)$ has exactly two nodal domains: $\{ t > x_j^2\}$ and $\{t < x_j^2 \}$. Thus, for all $n\geq 1$, the minimum number of nodal domains of time-dependent hcps in $\R^{n+1}$ of degree 2 is $m_{n,2}=2$.
\end{eg}

\begin{eg}\label{eq:1d} Up to scaling by a constant multiple, for every $d\geq 1$, there exists a unique hcp $p_d(x,t)$ of degree $d$ in $\R^{1+1}$ and $m_{1,d}=M_{1,d}=\nodal(p_d)=2\lceil d/2\rceil$. See \S\ref{sec:prop} for the details.
\end{eg}

\begin{eg}\label{eq:n2d3} The polynomial \begin{equation}p(x,y,t)=150t(3x+y)+27x^3+267x^2y+144xy^2-64y^3\end{equation} is an hcp of degree 3 in $\R^{2+1}$ and $\nodal(p)=2$. This example can be found by evaluating \eqref{eqn:d_odd_u} with $d=3$, $\epsilon=1$, and $(\cos\alpha,\sin\alpha)=(3/5,4/5)$ and multiplying by a constant to obtain a polynomial with integer coefficients. It can be checked that $\nabla p(x,y,t)=0$ if and only if $(x,y,t)=(0,0,0)$.
\end{eg}

\begin{eg}\label{eq:n2d4} The polynomial \begin{equation}\begin{split} p(x,y,t)=7500 t^2 &+ 150t(37x^2-7xy+13y^2) \\ &+ 192 x^4 + 176 x^3 y + 1623 x^2 y^2 - 351 x y^3 - 108 y^4\end{split}\end{equation} is an hcp of degree 4 in $\R^{2+1}$ and $\nodal(p)=3$. This example can be found by evaluating \eqref{eqn:u_0_mod_4} with $d=4$, $\epsilon=1/2$, and $(\cos\alpha,\sin\alpha)=(3/5,4/5)$ and multiplying by a constant to obtain a polynomial with integer coefficients.
\end{eg}

\begin{eg} \label{eq:n3d4} The polynomial \begin{equation} p(x,y,z,t)=12t^2+12tx^2+x^4+y^4-6y^2z^2+z^4\end{equation} is an hcp of degree 4 in $\R^{3+1}$ and $\nodal(p)=2$. See Proposition \ref{prop:high_dim}. It can be checked that $\nabla p(x,y,z,t)=0$ if and only if $(x,y,z,t)=(0,0,0,0)$.\end{eg}

\subsection{Comparison with spherical harmonics and Grushin spherical harmonics}\label{ss:harmonic} Steady-state solutions of the heat equation on $\R^{n+1}$ correspond to harmonic functions on $\R^n$. Nodal geometry of homogeneous harmonic polynomials $p$ in $\R^n$ (also called \emph{solid harmonics}) and of the so-called \emph{spherical harmonics} $p|_{\mathbb{S}^{n-1}}$ is well-studied. See \cite{EJN07, NS09,Logunov-annals1,Logunov-annals2} for a short sample, including results for Laplace-Beltrami eigenfunctions on closed Riemannian manifolds beyond the sphere.

Parallel to the quantities $m_{n,d}$ and $M_{n,d}$ defined in Theorems \ref{thm:main} and \ref{thm:max-asymptotics}, we let $\tilde m_{n,d}$ and $\tilde M_{n,d}$ denote the minimum and maximum number of nodal domains of hhps in $\R^n$ of degree $d$, respectively. In the line ($n=1$), the only harmonic functions are affine and $\tilde m_{1,1}=\tilde M_{1,1}=2$ trivially. In the plane ($n=2$), since harmonic functions can be written as the real part of a complex-analytic function, the nodal sets of hhp degree $d$ are rotations of $\{(x,y):\mathrm{Re}(x+iy)^d=0\}$ and $\tilde m_{2,d}=\tilde M_{2,d}=2d$ for all $d\geq 1$. The situation becomes more interesting when $n\geq 3$.
Lewy \cite{Lewy77} proved that $\tilde m_{3,d}=2$ whenever $d\geq 1$ is odd and $\tilde m_{3,d}=3$ whenever $d\geq 2$ is even. An explicit example of a degree 3 hhp with exactly two nodal domains, \begin{equation}p(x,y,z) = x^3 - 3xy^2 + z^3 - (3/2)(x^2 + y^2)z,\end{equation} was found independently by Szulkin \cite{Szulkin78}. Badger, Engelstein, and Toro \cite{BET17} gave a simple construction (utilizing the explicit description of hhps in $\R^2$) that shows $\tilde m_{n,d}=2$ for all $n\geq 4$ and $d\geq 1$, independent of the parity of $d$.

When $n\geq 3$, any hhp $p$ in $\R^n$ of degree $d$ satisfies the equation $$-\Delta_{\sphere^{n-1}} p|_{\sphere^{n-1}} = d(d+n-2)p|_{\sphere^{n-1}},$$ where $\Delta_{\sphere^{n-1}}$ denotes the Laplace-Beltrami operator on the sphere. Courant's nodal domain theorem asserts that when listed with multiplicity, the $m$-th eigenfunction of the Laplace-Beltrami operator on a closed $C^1$ Riemannian manifold has at most $m$ nodal domains \cite{CH,local-courant}. Since the dimension of the vector space of hhps of degree $d \ge 2$ in $\R^n$ is exactly
$\binom{n+d-1}{n-1} - \binom{n+d-3}{n-1}$
(see \cite[Proposition 5.8]{ABR}), the maximal number of linearly independent hhps of degree at most $d$ is exactly $\binom{n+d-1}{n-1} + \binom{n+d-2}{n-1}=O(d^{n-1})$ as $d\rightarrow\infty$. Thus, $\tilde M_{n,d}=O(d^{n-1})$ as $d\rightarrow\infty$. It is known that the upper bound on $M_{n,d}$ provided by Courant's theorem is not sharp. See \cite{Leydold96} for further discussion and the (still to this day) state-of-the-art bounds on $M_{2,d}$.

In \cite{LTY15}, Liu, Tian, and Yang study the minimum number $\tilde m^G_{2,d}$ of nodal domains of \emph{Grushin spherical harmonics}, i.e.~parabolically homogenenous polynomial solutions $p(x,y,t)$ of the operator $L_G=\partial_x^2 + \partial_y^2 + (x^2 + y^2) \partial_t^2$ on $\R^{2+1}$. In particular, they prove that $\tilde m^G_{2,d}=2$ when $d\equiv 0\pmod{4}$, whereas $\tilde m^G_{2,d}\geq 3$ when $d\equiv 0\pmod{4}$; moreover, they provide examples that show $m^G_{2,4}=m^G_{2,8}=m^G_{2,12}=3$. The method of proof is the perturbation technique of Lewy \emph{op.~cit.} Other than the fact that the parabolic scaling is the natural scaling for solutions of the Grushin operator and the heat operator, there does not seem to be any immediate connection between Grushin spherical harmonics and hcps. To wit, when $p(x,y,t)$ is a Grushin spherical harmonic, so is $p(x,y,-t)$, whereas this strong symmetry property is not enjoyed by time-dependent solutions of the heat equation. Thus, the main results in \cite{LTY15} cannot be used to establish Theorem \ref{thm:main} or vice-versa.

\subsection{Free boundary regularity for caloric measure} \label{ss:2-phase} The phrase \emph{caloric measure} refers to a family of probability measures $\omega^{X,t}_\Omega$ that are supported on a subset of the boundary $\partial\Omega$ of a space-time domain $\Omega\subset \R^{n+1}=\R^{n}\times\R$ and indexed by the points $(X,t)\in\Omega$. They arise in connection with the Dirichlet problem for the heat equation. Stochastically, $\omega^{X,t}_\Omega(E)$ is the probability that the trace $(B(t+s),t-s)_{s\geq 0}$ of a Brownian traveler $B(s)$ starting at $B(t)=X$ and sent into the past first intersects $\partial\Omega$ inside the set $E\subset\R^{n+1}$. For a consolidated introduction to caloric measure, see \cite[\S3]{dim-caloric}, and for extensive background, see \cite{Watson} or \cite{Doob}. Recent progress on free boundary regularity for caloric measure was made by Mourgoglou and Puliatti \cite{MP21}, propelling the time-dependent theory for caloric measure closer to the better developed, time-independent theory for harmonic and elliptic measure (see e.g.~\cite{KPT,AM19,BETnu}). Among other results---and setting aside certain technical assumptions related to the heat potential theory---their work leads to the following description of the asymptotic shape of the free boundary in the two-phase setting.

\begin{thm}[Mourgoglou-Puliatti] \label{t:mp} Assume that $\Omega^+=\R^{n+1}\setminus\overline{\Omega^-}$ and $\Omega^-=\R^{n+1}\setminus\overline{\Omega^+}$ are complementary domains in $\R^{n+1}$ with a sufficiently regular (for heat potential theory), common boundary $\partial\Omega=\partial\Omega^+=\partial\Omega^-$. Let $\omega^\pm=\omega^{X_\pm,t_0}_{\Omega^\pm}$ be caloric measures for $\Omega^\pm$ with poles at $(X_\pm,t_0)\in\Omega^\pm$ or poles at infinity. If $\omega^\pm$ are doubling measures, $\omega^+\ll\omega^-\ll\omega^+$, and the Radon-Nikodym derivatives $d\omega^-/d\omega^+$ and $d\omega^+/d\omega^-$ are bounded continuous functions on $\partial\Omega\cap\{t\leq t_0\}$, then $\partial\Omega=\Gamma_1\cup\Gamma_2\cup\cdots\cup\Gamma_{d_0},$ where geometric blow-ups (tangent sets) $\Sigma=\lim_{i\rightarrow\infty} r_{i}^{-1}(\partial\Omega-x)$ of $\partial\Omega$ at $x\in\Gamma_d$ along sequence of scales $r_i\rightarrow 0$ are zero sets of homogeneous caloric polynomials $p$ of degree $d$ such that $\{p>0\}$ and $\{p<0\}$ are connected. Cf.~\cite[Theorems III, IV]{MP21}.
\end{thm}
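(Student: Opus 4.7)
The broad plan is to perform a blow-up analysis at each free boundary point $x \in \partial\Omega$, extract tangent measures and tangent sets, and then classify the tangents as zero sets of homogeneous caloric polynomials with connected positive and negative parts. First, because $\omega^\pm$ are doubling, standard tangent measure theory (after Preiss) guarantees that at each $x \in \partial\Omega \cap \{t \leq t_0\}$ and each sequence of scales $r_i \to 0$, one can extract a weakly convergent subsequence of the parabolically rescaled measures with tangent limits $\sigma^\pm$. A preliminary step is to show that $\sigma^\pm$ are themselves caloric measures of a blow-up domain $\Omega^\pm_\infty$; this is typically achieved by a Green's function argument using heat potential estimates and the doubling hypothesis.

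Next, the hypothesis that $d\omega^-/d\omega^+$ and its reciprocal are bounded continuous should transfer to a rigidity constraint on the tangents: at each boundary point $x$, one expects $\sigma^- = c(x)\sigma^+$ with the positive constant $c(x) = (d\omega^-/d\omega^+)(x)$. This yields a homogeneous two-phase free boundary condition for the Green's functions of $\Omega^\pm_\infty$. The next move is to deploy an Alt-Caffarelli-Friedman-type monotonicity formula adapted to the heat operator: a discrepancy functional built from parabolic $L^2$ averages of the Green's functions of $\Omega^\pm$ should be monotone non-decreasing in the scale parameter. On the blow-up, this monotonicity saturates, forcing each limiting Green's function to be parabolically homogeneous of a common degree $d$. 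Since these functions also solve the heat equation, they must be homogeneous caloric polynomials $p^\pm$ of degree $d$, with $\partial \Omega^\pm_\infty = \{p^\pm = 0\}$.

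The connectedness of $\{p>0\}$ and $\{p<0\}$ in the limit is inherited from the two-phase structure: both $\Omega^\pm_\infty$ arise as blow-ups of connected complementary domains, so (with an appropriate sign convention) the limit polynomial $p$ has exactly two nodal domains. The stratification $\partial\Omega = \Gamma_1 \cup \cdots \cup \Gamma_{d_0}$ is then defined by assigning to each $x$ the degree $d(x)$ of any blow-up polynomial at $x$; uniqueness of this degree, despite potential non-uniqueness of the tangent set itself, follows from the monotonicity formula. A finite upper bound $d_0$ on the degrees should follow by combining the doubling constants with the continuity of $d\omega^-/d\omega^+$, which together constrain the possible frequency values at boundary points.

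The main obstacle is establishing the parabolic Alt-Caffarelli-Friedman monotonicity formula in this low-regularity setting and, closely related, propagating continuity of the Radon-Nikodym derivative into the blow-up limit. Heat potential-theoretic analogues of elliptic tools (Bourgain-type nondegeneracy, parabolic boundary Harnack principles, and sharp Green's function bounds) form the backbone, but parabolic subtleties---the anisotropy of scaling, the time-asymmetry of caloric measure, and the need to work only at sufficiently large backward time---all require special attention.
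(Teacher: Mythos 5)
The paper does not prove Theorem \ref{t:mp}: it is quoted as a known result from Mourgoglou and Puliatti, with ``Cf.~\cite[Theorems III, IV]{MP21}'' indicating that the statement is a paraphrase of their theorems. Badger and Jeznach treat it as an external black box; their contribution is to determine which strata $\Gamma_d$ can actually be realized, by constructing hcps with two nodal domains or proving none exist (forcing $\Gamma_d = \emptyset$). So there is no in-paper proof against which to compare your sketch, and attempting to reconstruct the proof of \cite{MP21} is outside the scope of what the paper does here.

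With that caveat, a few substantive gaps in your outline are worth naming. The most serious is the connectivity step. You assert that connectivity of $\{p>0\}$ and $\{p<0\}$ is ``inherited'' because $\Omega^\pm_\infty$ are blow-ups of connected complementary domains, but blow-ups of connected open sets are in general not connected, and propagating connectivity into the limit is one of the genuinely delicate parts of the two-phase theory. Indeed, it is precisely this abstract connectivity constraint that makes the question of the present paper nontrivial: Theorem \ref{thm:main} shows that for $n=2$ and $d\equiv 0\pmod 4$ no time-dependent hcp of degree $d$ has only two nodal domains, which together with Theorem \ref{t:mp} forces $\Gamma_d=\emptyset$ in that case. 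Two smaller points: the finiteness of the top degree $d_0$ does not obviously follow from doubling and continuity of $d\omega^-/d\omega^+$ alone, as you suggest; and the principal monotonicity tool in this non-variational free boundary setting is typically a frequency-function or Weiss-type argument used to classify homogeneity of the blow-up, rather than an Alt-Caffarelli-Friedman product monotonicity formula (which is suited to controlling products of Dirichlet energies in variational problems).
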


The main results in this paper validate Mourgoglou and Puliatti's theory by {demonstrating that there exist situations where time-dependent geometric blow-ups exist}. Furthermore, we obtain a refined description of the free boundary in low dimensions.

\begin{thm} When $n=1$, {$\partial\Omega=\Gamma_1 \cup \Gamma_2$}. When $n=2$, \begin{equation}\partial\Omega=\bigcup_{k\geq 0} \Gamma_{4k+1}\cup\Gamma_{4k+2}\cup\Gamma_{4k+3};\end{equation} for every $d\not\equiv 0\pmod 4$, the stratum $\Gamma_d$ is nonempty for some pair of {complementary domains $\Omega_{\pm}$} {satisfying} the free boundary condition. When $n=3$, the stratum $\Gamma_d$ can be nonempty for every $d\geq 1$.\end{thm}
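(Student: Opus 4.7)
The plan is to combine Theorem \ref{t:mp} (Mourgoglou--Puliatti) with Theorem \ref{thm:main} and Example \ref{eq:1d}. The Mourgoglou--Puliatti theorem reduces the question of whether $\Gamma_d\neq\emptyset$ to the existence of a homogeneous caloric polynomial $p$ of degree $d$ on $\R^{n+1}$ with $\mathcal{N}(p)=2$. I would address the containment $\partial\Omega\subseteq\bigcup_d \Gamma_d$ with the listed restrictions, and the nonemptiness of strata, in turn.

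\emph{Ruling out strata.} For $n=1$, Example \ref{eq:1d} shows that up to scalar the unique hcp of degree $d$ in $\R^{1+1}$ has $2\lceil d/2\rceil$ nodal domains, which exceeds $2$ for $d\ge 3$; hence $\Gamma_d=\emptyset$ for all $d\geq 3$. Excluding $\Gamma_2$ is more subtle because $\mathcal{N}(2t+x^2)=2$, and I expect this to be the most delicate step in the $n=1$ case: it requires showing that under the hypotheses of Theorem \ref{t:mp} no boundary point of $\partial\Omega\subset\R^{1+1}$ admits a non-affine parabolic tangent, something that does not follow from counting nodal domains alone and must draw on one-dimensional caloric measure theory. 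For $n=2$ and $d\equiv 0\pmod 4$ with $d\geq 4$, every time-independent hcp in $\R^{2+1}$ is a homogeneous harmonic polynomial in $\R^2$, whose nodal set has $2d\geq 8$ components (see \S\ref{ss:harmonic}), and by Theorem \ref{thm:main} every time-dependent hcp of such degree satisfies $\mathcal{N}(p)\ge m_{2,d}=3$. Hence no hcp of degree $d\equiv 0\pmod 4$ in $\R^{2+1}$ has $\mathcal{N}(p)=2$, and $\Gamma_d=\emptyset$.

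\emph{Nonemptiness of strata.} For each $n\in\{2,3\}$ and each admissible $d$, take an hcp $p$ of degree $d$ with $\mathcal{N}(p)=2$ provided by Theorem \ref{thm:main} or the explicit examples in \S\ref{ss:defs} (for $d=1$ simply take $p(x,t)=x_1$), and set $\Omega^\pm=\{\pm p>0\}$. Since $\mathcal{N}(p)=2$ the two sets are connected, they are complementary, and $\partial\Omega=\{p=0\}$. Because $p$ is parabolically homogeneous of degree $d$, every parabolic blow-up of $\partial\Omega$ at the origin equals $\{p=0\}$ itself, so $0\in\Gamma_d$. This yields nonempty $\Gamma_d$ for every $d\not\equiv 0\pmod 4$ when $n=2$ and for every $d\geq 1$ when $n=3$.

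\emph{Main obstacle.} The substantive work that remains is verifying the analytic hypotheses of Theorem \ref{t:mp} for the explicit polynomial cones $\Omega^\pm=\{\pm p>0\}$: the caloric measures must be doubling, mutually absolutely continuous, and have bounded continuous Radon--Nikodym derivatives on $\partial\Omega\cap\{t\leq t_0\}$ (with poles in $\Omega^\pm$ or at infinity). The parabolic self-similarity of $\Omega^\pm$ under $(x,t)\mapsto(\lambda x,\lambda^2 t)$, together with smoothness of $\partial\Omega$ away from the origin (the hcps we use have isolated critical points; cf.~Examples \ref{eq:n2d3}, \ref{eq:n2d4}, \ref{eq:n3d4}), are the key features that should make this tractable within the heat-potential framework of \cite{MP21}. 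This verification, together with the exclusion of $\Gamma_2$ in the $n=1$ case, is precisely where substantive work beyond direct citation of Theorems \ref{t:mp} and \ref{thm:main} is required.
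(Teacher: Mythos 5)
Your outline follows the paper's own route --- reduce nonemptiness of $\Gamma_d$ to the existence of a degree-$d$ hcp with two nodal domains via Theorem~\ref{t:mp}, rule out strata with Example~\ref{eq:1d} and Theorem~\ref{thm:main}, and realize the remaining strata via the cones $\Omega_p^\pm=\{\pm p>0\}$ and parabolic homogeneity --- but it stops short of a proof precisely at the item you flag as the \emph{main obstacle}. You correctly observe that the cones $\Omega_p^\pm$ must be shown to satisfy the analytic hypotheses of Theorem~\ref{t:mp} (doubling, mutual absolute continuity, bounded continuous Radon--Nikodym derivative), but you do not supply the verification, so what you have is a reduction rather than a proof. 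The paper closes this gap by (i) noting that the explicit hcps constructed in Proposition~\ref{prop:high_dim} and Theorems~\ref{thm:2mod4} and \ref{thm:d_odd} have smooth zero sets away from the origin, which furnishes the background regularity hypothesis, and (ii) citing \cite[\S6]{MP21} for the fact that, with poles at infinity, $\omega_p^+=\omega_p^-$ and $d\omega_p^-/d\omega_p^+\equiv 1$. That citation is the missing ingredient in your write-up.

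Two side remarks. For $n=2$, $d\equiv 0\pmod 4$ you separately dispose of time-independent hcps (hhps in $\R^2$, which have $2d\geq 8$ nodal domains) before invoking $m_{2,d}=3$ from Theorem~\ref{thm:main}; the paper's one-line proof quotes only Theorem~\ref{thm:main}, which is stated for time-dependent hcps, so your treatment is slightly more careful there. And your concern about $n=1$, $d=2$ is a fair one: $\mathcal{N}(p_2)=2$, so the nodal-domain count of Example~\ref{eq:1d} does not by itself exclude $\Gamma_2$; yet the paper's proof dismisses $\Gamma_d$ for all $d\geq 2$ by citing that example alone. You have identified a spot where the paper, as written, is terse and would require the kind of additional one-dimensional caloric-measure argument you sketched to be fully justified.
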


\begin{proof} When $n=1$, $\Gamma_d=\emptyset$ for all $d\geq 2$ by Example \ref{eq:1d}. When $n=2$, $\Gamma_d=\emptyset$ for all $d=4k$ by Theorem \ref{thm:main}. For the remaining pairs of $n$ and $d$, the examples of hcps $p$ in $\R^{n+1}$ of degree $d$ with $\nodal(p)=2$ constructed in the proof of Theorem \ref{thm:main} (see the proofs of Proposition \ref{prop:high_dim} and Theorems \ref{thm:2mod4} and \ref{thm:d_odd} for details) have smooth zero sets outside any neighborhood of the origin. This fact is enough to ensure that the domains $\Omega_p^+=\{p>0\}$ and $\Omega_p^-=\{p<0\}$ associated to $p$ satisfy the background regularity hypothesis in Theorem \ref{t:mp}. If $\omega_p^\pm$ denote the caloric measures on $\Omega_p^\pm$ with poles at infinity, then it is known that $\omega_p^+=\omega_p^-$ and $d\omega_p^-/d\omega_p^+\equiv 1$ (see \cite[\S6]{MP21}). Finally, by parabolic homogeneity, $\{p=0\}$ is the unique blow-up of $\partial\Omega_p^\pm=\{p=0\}$ at the origin. Therefore, $\Gamma_d$ is nonempty in $\partial\Omega^\pm_p$. \end{proof}

\subsection{Proof strategies and outline of the paper} \label{ss:organize}

In Section \ref{sec:prop}, we recall classical facts about hcps, including their connection with Hermite polynomials. We also introduce a basis of hcps of degree $d$ in $\R^{n+1}$, which we use in the constructions in Sections \ref{sec:high_dim} and \ref{sec:dim_3_constr}.

In Section \ref{sec:high_dim}, we first prove that $m_{n,d} = 2$ for $n \ge 3$ and any $d \ge 1$ by modifying a construction in \cite{BET17}. Next, we build time-dependent hcps in $\R^{n+1}$ with a large number of nodal domains by taking products of hcps in $\R^{1+1}$. Finally, after showing that any nodal domain of an hcp necessarily intersects $\{t=-1\}$, we employ the proof of Courant's nodal domain theorem on negative time slices of hcps to establish the upper bound on the number of nodal domains in Theorem \ref{thm:max-asymptotics}. This leaves us to determine the value of $m_{2,d}$ for $d \ge 1$.

In Section \ref{sec:dim_3_low}, we show that $m_{2,d} \ge 3$ whenever $d \ge 1$ satisfies $d \equiv 0 \pmod 4$. The main argument leverages the fact that if $p$ is an hcp of degree $d$ in $\R^{2+1}$, then the nodal set of $p|_{\sphere^2}$ near the north and south poles is asymptotic to the zero set of a homogeneous harmonic polynomial in two variables, which are easy to describe and are perfectly understood.

In Section \ref{sec:dim_3_constr}, we construct examples of time-dependent hcps of degree $d \ge 1$ in $\R^{2+1}$ with exactly $m_{2,d}$ nodal domains. The basic strategy dates back to \cite{Lewy77}: starting with an hcp of degree $d$ in $\R^{2+1}$ whose zero set we can explicitly describe, we perturb the polynomial to produce a time-dependent hcp with the desired number of nodal domains. This style of argument requires a careful analysis of how perturbation affects the topology of nodal sets; see Lemma \ref{l:graph} for a precise statement and Section \ref{s:appendix} for the proof of the lemma.

\section{Basic properties of homogeneous caloric polynomials}\label{sec:prop}

Given an hcp $p(x,t)$ of degree $d$, we typically shall choose to write $p(x,t)$ in the form
\begin{equation}
p(x,t) = t^m p_m(x) + t^{m-1} p_{m-1}(x) + \cdots  + p_0(x)\quad \text{for all $x\in\R^n$, $t\in\R$}. \label{eqn:std_form}
\end{equation}
Each coefficient $p_{m-j}=p_{m-j}(x)$ is necessarily an algebraically homogeneous polynomial of degree $d - 2(m-j)$ (see Example \ref{eg:hcp}). Moreover, applying the heat operator to $p$ and collecting like powers of $t$, we obtain the relations
\begin{equation}\label{cond:soln}
0 = \Delta p_m,\quad m p_m = \Delta p_{m-1},\quad \cdots, \quad
(m-j) p_{m-j} = \Delta p_{m-j-1},\quad\cdots,\quad p_1 = \Delta p_0.
\end{equation}
As such, we refer to $p_m$ as the \emph{harmonic coefficient} of $p$ and obtain that the other coefficients $p_{m-j}$ are polyharmonic: $\Delta^{j+1} p_{m-j} = 0$.  In fact, the relations \eqref{cond:soln} and the requirement that each coefficient $p_i(x)$ be homogeneous gives a characterization of $p(x,t)$ being an hcp. Thus, we arrive at the following elementary method of generating hcps: starting with any choice of $m\geq 0$ and hhp $p_m(x)$, use \eqref{cond:soln} to inductively solve for homogeneous coefficients $p_{m-1}(x), \dots, p_0(x)$; then $p(x,t)$ defined by \eqref{eqn:std_form} is an hcp.

Now, given any homogeneous polynomial $q(x)=bx^d$ with $x\in\R^1$, the polynomial $r(x)=\frac{1}{(d+2)(d+1)}cx^2q(x)$ is the unique homogeneous function such that $r''(x)=cq(x)$. Since the only hhps in $\R^1$ are of the form $q(x)=b$ or $q(x)=bx$, it follows that, up to scaling by a constant, there exists a unique hcp $p_d(x,t)$ in $\R^{1+1}$ for each degree $d\geq 0$. We adopt the following normalization for the hcps $p_d(x,t)$, emphasizing the time variable.

\begin{defn}\label{def:basic-p} For each $d\geq 0$, define $p_d:\R^{1+1}\rightarrow\R$ as follows. When $d=2k$  is even, \begin{equation*}
p_{d}(x,t) := t^k + \tfrac{k}{2!} t^{k-1} x^2 + \tfrac{k(k-1)}{4!} t^{k-2} x^4 + \cdots + \tfrac{k!}{(2k)!} x^{2k} = \sum_{j=0}^k \tfrac{k!}{(k-j)! (2j)!} t^{k-j} x^{2j}.
\end{equation*} When $d=2k+1$ is odd, \begin{equation*}
p_{d}(x,t) := t^k x + \tfrac{k}{3!} t^{k-1} x^3 + \tfrac{k(k-1)}{5!} t^{k-2} x^5 + \cdots + \tfrac{k!}{(2k+1)!} x^{2k+1} = \sum_{j=0}^k \tfrac{k!}{(k-j)! (2j+1)!} t^{k-j} x^{2j+1}.\end{equation*}
\end{defn}
{We shall refer to each $p_d(x,t)$ in the form above as a \textit{basic} hcp, since we shall show shortly that hcps in higher spatial dimensions can be represented through these one-dimensional hcps (see Lemma \ref{cor:orth}).}

\begin{defn}[see {\cite[\S5.5]{Szego75}}] The \emph{Hermite polynomials} $H_0(x)=1$, $H_1(x)=2x$, $H_2(x)=4x^2-2$, $H_3(x)=8x^3-12x$, $H_4(x)=16x^4-48x^2+12$, $H_5(x)=32x^5-160x^3+120x$, \emph{etc.}~are the family of orthogonal polynomials for the weighted space $L^2(\R,e^{-x^2}dx)$ defined by requiring that $\deg H_d=d$, the coefficient of $x^d$ in $H_d(x)$ is positive, and \begin{equation} \label{H-def} \int_{\R} H_{d}(x)H_{d'}(x)\,e^{-x^2}dx=\pi^{1/2}2^d d!\,\delta_{dd'}\quad\text{for all }d,d'\in\N.\end{equation}\end{defn}

\begin{lemma}[see {\cite[\S5.5]{Szego75}}] Equivalently, for all $d\geq 0$, \begin{equation}\label{H-expansion} H_d(x)=\sum_{j=0}^{\lfloor d/2\rfloor} \frac{d!}{j!(d-2j)!}(-1)^j(2x)^{d-2j}.\end{equation}\end{lemma}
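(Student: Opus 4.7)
My plan is to identify the polynomial on the right-hand side, call it $\tilde H_d(x)$, via a generating function, and then verify that $\tilde H_d$ meets the three defining conditions of $H_d$: degree $d$, positive leading coefficient, and the orthogonality relation \eqref{H-def}. Since those three conditions characterize $H_d$ uniquely, this suffices.

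First I would introduce $F(x,t) := e^{2xt - t^2}$ and expand the two factors $e^{2xt}$ and $e^{-t^2}$ as power series in $t$. Multiplying the two series and collecting terms of degree $t^d$ gives
\begin{equation*}
F(x,t) = \sum_{d=0}^\infty \left( \sum_{j=0}^{\lfloor d/2\rfloor} \frac{(-1)^j (2x)^{d-2j}}{j!(d-2j)!}\right) t^d = \sum_{d=0}^\infty \tilde H_d(x) \frac{t^d}{d!},
\end{equation*}
where $\tilde H_d$ is precisely the polynomial defined by \eqref{H-expansion}. Inspection of the $j=0$ term shows that $\tilde H_d$ has degree exactly $d$ with positive leading coefficient $2^d$.

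Next I would verify the orthogonality by computing the ``double'' generating series
\begin{equation*}
\sum_{d,d'\geq 0} \frac{t^d s^{d'}}{d!\,d'!}\int_\R \tilde H_d(x)\tilde H_{d'}(x)\,e^{-x^2}\,dx \;=\; \int_\R F(x,t)\,F(x,s)\,e^{-x^2}\,dx,
\end{equation*}
where swapping sum and integral is justified since the series for $F$ converge uniformly on compact sets and $e^{-x^2}$ provides integrability. Completing the square in the exponent and using $\int_\R e^{-y^2}\,dy = \pi^{1/2}$ reduces the right-hand side to $\pi^{1/2} e^{2ts} = \pi^{1/2}\sum_{k\geq 0} (2ts)^k/k!$. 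Matching coefficients of $t^d s^{d'}$ on both sides, the integral vanishes when $d\neq d'$ and equals $\pi^{1/2} 2^d d!$ when $d=d'$, which is precisely \eqref{H-def}.

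Having checked all three characterizing properties, uniqueness forces $\tilde H_d = H_d$, proving \eqref{H-expansion}. The only step requiring any real care is the integral interchange and the completion-of-the-square computation in the generating-function identity; I expect no serious obstacle, as everything reduces to the classical Gaussian integral.
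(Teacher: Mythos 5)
Your proof is correct. The paper does not prove this lemma; it simply cites Sz\H{e}go's book, where the standard argument is precisely the generating-function computation you give. Your verification of the three defining conditions (degree $d$, positive leading coefficient $2^d$, and the orthogonality normalization via the Gaussian integral $\int_\R F(x,t)F(x,s)e^{-x^2}\,dx = \pi^{1/2}e^{2ts}$), together with the uniqueness of a polynomial sequence satisfying them, is complete. The only point you flagged, the interchange of sum and integral, is indeed routine: one can dominate $|\tilde H_d(x)|$ by the same sum with $(2x)^{d-2j}$ replaced by $(2|x|)^{d-2j}$, whose generating function is $e^{2|x||t|+|t|^2}$, and then the dominating integrand $e^{2|x|(|t|+|s|)+|t|^2+|s|^2}e^{-x^2}$ is integrable, so Tonelli applies.
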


After establishing the connection between the basic hcps and the Hermite polynomials (see e.g.~\cite{RW59}, \cite{Chen98}, \cite{KPS21}), one can use facts about the zeros of $H_d(x)$ and parabolic scaling to derive the following description of $p_d(x,t)$ and its nodal set.

\begin{prop}\label{lem:hcp_dim_1} For all $d\geq 2$, the basic hcp $p_d(x,t)$ assumes the form
\begin{equation}\label{pd-factor}
p_{d}(x,t) =\left\{\begin{array}{rl}
  (t+a_{d,1}x^2)\cdots (t+ a_{d,k}x^2) &\text{when $d=2k$ is even},\\
  x(t+a_{d,1}x^2)\cdots(t+a_{d,k}x^2) & \text{when $d=2k+1$ is odd},\end{array}\right.
\end{equation} for some distinct numbers $0<a_{d,1}<\dots<a_{d,k}$. Moreover, if we write \begin{align*}
p_{2k-1}(x,t)&=x(t+a_1x^2)\cdots(t+a_{k-1}x^2), &p_{2k+1}(x,t)&=x(t+c_1x^2)\cdots(t+c_kx^2),\end{align*}\begin{equation*}
p_{2k}(x,t)=(t+b_1x^2)\cdots(t+b_kx^2),\end{equation*}
 with the $a_i$\!'s, $b_i$\!'s, and $c_i$\!'s each listed in increasing order, then the coefficients associated with consecutive polynomials are interlaced: \begin{equation}\label{pd-interlace} \left\{\begin{array}{l}
 b_1<a_1<b_2<a_2<\cdots<a_{k-1}<b_k,\\
 \,c_1<b_1<c_2<b_2<\cdots<b_{k-1}<c_k<b_k.
 \end{array}\right.
 \end{equation}
\end{prop}
\begin{proof} Replacing $j$ by $k-j$ in the summations in Definition \ref{def:basic-p} yield that for all $d\geq 0$, \begin{equation}\label{basic-p-expansion} p_d(x,t)=\sum_{j=0}^{\lfloor d/2\rfloor} \frac{\lfloor d/2\rfloor!}{j!(d-2j)!}t^jx^{d-2j}.\end{equation} Comparing \eqref{H-expansion} and \eqref{basic-p-expansion}, we see that \begin{equation} \label{basic-b-vs-H} p_d(x,-1)=\frac{\lfloor d/2\rfloor!}{d!}H_d(x/2).\end{equation} Because the Hermite polynomial $H_d$ is even, when $d$ is even, $H_d$ is odd, when $d$ is odd, and orthogonal polynomials have a full number of distinct real roots (see \cite[Theorem 3.3.1]{Szego75}), we can factor $H_d(x)=(x^2-r_1^2)\cdots(x-r_k^2)$ for some $0<r_k<\cdots<r_1$, when $d=2k$ is even, and $H_d(x)=x(x^2-r_1^2)\cdots(x^2-r_k^2)$, when $d=2k+1$ is odd. Hence $$p_d(x,-1)=\frac{\lfloor d/2\rfloor!}{d!}r_1^2\cdots r_k^2x^{d-2\lfloor d/2\rfloor}\left(\frac{x^2}{4r_1^2}-1\right)\cdots\left(\frac{x^2}{4r_k^2}-1\right).$$ Together with parabolic homogeneity and the fact that $p_d(x,t)$ was normalized to have leading term $t^k$ or $t^kx$, this yields \eqref{pd-factor} with $a_{d,i}=1/(4r_i^2)$. Thus, \eqref{pd-interlace} follows from the interlacing of roots of consecutive Hermite polynomials \cite[Theorem 3.3.2]{Szego75}.\end{proof}

\begin{figure}\begin{center}\includegraphics[width=\textwidth]{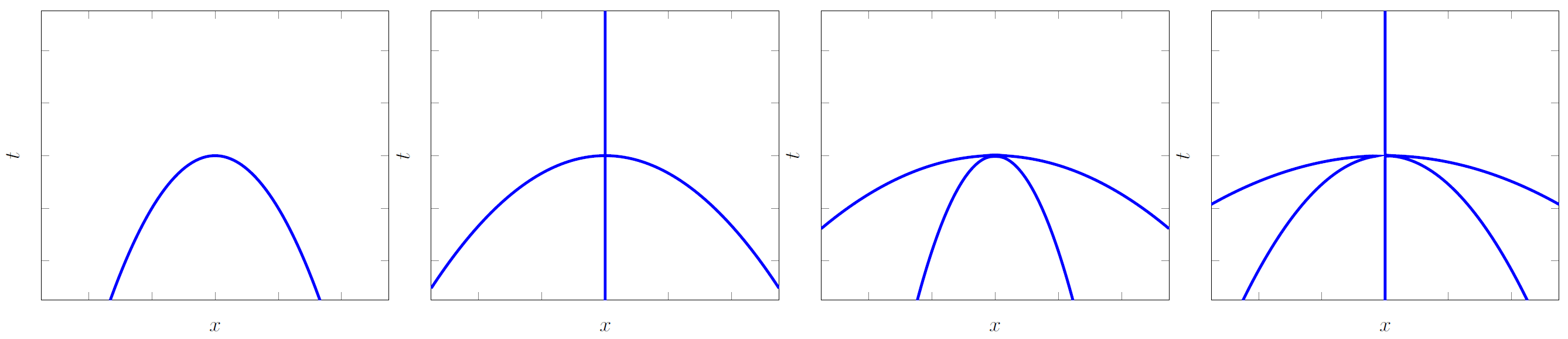}\end{center}
\caption{The nodal set of a degree $d$ hcp in $\R^{1+1}$ is a union of $\lfloor d/2 \rfloor$ nested, downward-opening parabolas with a common turning point at the origin, and when $d$ is odd, an additional vertical line (the $t$-axis). Thus, the number of nodal domains is precisely $2\lceil d/2\rceil$. From left to right, we illustrate the cases $d=2$, \dots, $d=5$. Inside the nodal set of $p_dp_{d+1}$, the ``nodal parabolas'' of consecutive hcps $p_d$ and $p_{d+1}$ are intertwined: the ``widest'' parabola of $p_{d+1}$ sits above the ``widest'' parabola of $p_d$; the ``widest'' parabola of $p_d$ sits above the ``second widest'' parabola of $p_{d+1}$; \emph{etc.}} \label{fig:hcp_1d}
\end{figure}

\begin{cor} \label{cor:dim_1}
Any hcp $p(x,t)$ in $\R^{1+1}$ of degree $d\geq 1$ has exactly $2\lceil d/2\rceil$ nodal domains. See Figure \ref{fig:hcp_1d}.
\end{cor}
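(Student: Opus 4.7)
My plan is to reduce the problem to computing $\nodal(p_d)$ and then read off the count from the factorization in Theorem \ref{lem:hcp_dim_1}. First, I will invoke the observation preceding Definition \ref{def:basic-p} that the space of hcps of degree $d$ in $\R^{1+1}$ is one-dimensional. Consequently any hcp $p$ of degree $d$ is a nonzero scalar multiple of $p_d$, so $p$ and $p_d$ share the same nodal set and the same number of nodal domains; it suffices to compute $\nodal(p_d)$.

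Next, I will use Theorem \ref{lem:hcp_dim_1} to describe the nodal set $Z$ of $p_d$ explicitly. Setting $k := \lfloor d/2 \rfloor$, the set $Z$ consists of $k$ nested downward-opening parabolas $t = -a_{d,i} x^2$ (with $0 < a_{d,1} < \cdots < a_{d,k}$) together with the $t$-axis $\{x=0\}$ when $d$ is odd. The key structural point is that all of these curves pass through the origin, while the parabolas are otherwise pairwise disjoint because the $a_{d,i}$ are distinct and positive.

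For even $d = 2k$, I will count components of $\R^2 \setminus Z$ by analyzing the $k+1$ strips between consecutive parabolas. The region above the outermost parabola contains the positive $t$-axis and is a single component; similarly the region below the innermost parabola contains the negative $t$-axis and is a single component. Each of the $k-1$ intermediate strips $\{-a_{d,i+1}x^2 < t < -a_{d,i}x^2\}$ has empty intersection with $\{x=0\}$ (both bounds vanish there), so it pinches off at the origin and splits into a left half and a right half. Summing yields $2 + 2(k-1) = 2k = 2\lceil d/2\rceil$ nodal domains. For odd $d = 2k+1$, the $t$-axis belongs to $Z$, so every nodal domain lies in a single open half-plane; within each half-plane the $k$ parabolas are disjoint graphs over a connected interval and therefore produce $k+1$ connected nested strips (no pinching occurs since $x=0$ is excluded), giving $2(k+1) = 2\lceil d/2\rceil$ components total.

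The only substantive topological observation is the pinching of the intermediate strips at the origin in the even case, which doubles the component count for each such strip; everything else is a straightforward bookkeeping exercise based on the factorization \eqref{pd-factor}. I do not anticipate any real obstacle.
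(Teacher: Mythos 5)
Your proof is correct and follows essentially the same approach as the paper, which simply refers the reader to Figure \ref{fig:hcp_1d} and its caption after establishing the factorization in Theorem \ref{lem:hcp_dim_1}; you have just supplied the explicit region-counting (the pinching of intermediate strips at the origin for even $d$, and the $k+1$ strips per half-plane for odd $d$) that the paper leaves to the picture.
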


In contrast to the case $n=1$, given a homogeneous polynomial $q(x)$ with $x\in\R^n$ for some $n\geq 2$, there is more than one way to produce a homogeneous polynomial $r(x)$ such that $\Delta r(x) = cq(x)$. We can build an explicit basis for the vector space $\HCP{d}(\R^{n+1})$ of all hcps of degree $d$ in $\R^{n+1}$ (and the zero function) using products of basic hcps in $\R^{1+1}$.

\begin{defn} Let $n\geq 2$. For each multi-index $\alpha=(\alpha_1,\dots,\alpha_n)$, define \begin{equation} p_\alpha(x,t):=p_{\alpha_1}(x_1,t)\cdots p_{\alpha_n}(x_n,t). \label{eqn:p_alpha}\end{equation}\end{defn}

{Recall that for a separable Hilbert space $H$, a countable set $\{e_i\}_{i \in I} \subset H$ is called an orthogonal basis for $H$ if its elements are pairwise orthgonal, and its span is dense in $H$.}

\begin{lemma}\label{cor:orth} For all $t<0$, the set $\{p_\alpha(\cdot,t):\alpha\text{ is a multi-index in }\N^n\}$ is an orthogonal basis for the weighted space $L^2(\R^n,e^{-|x|^2/4t}\,dx)$.
\end{lemma}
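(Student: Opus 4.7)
The plan is to reduce to the one-dimensional case via the product structure of both $p_\alpha(x,t)$ and the Gaussian weight, and then leverage the classical orthogonality and completeness of the Hermite polynomials. (Implicit in the statement is that for $t<0$ the weight should be interpreted as the decaying Gaussian $e^{-|x|^2/(4|t|)}$; with this reading the argument proceeds as follows.)

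First I would treat the one-dimensional case. Combining parabolic homogeneity with the identity $p_d(x,-1)=\tfrac{\lfloor d/2\rfloor!}{d!}H_d(x/2)$ established in the proof of Theorem \ref{lem:hcp_dim_1} gives
\[
p_d(x,t)=\tfrac{\lfloor d/2\rfloor!}{d!}\,|t|^{d/2}\,H_d\!\left(\tfrac{x}{2\sqrt{|t|}}\right)\quad\text{for all }t<0.
\]
Under the substitution $y=x/(2\sqrt{|t|})$, the Hermite orthogonality relation \eqref{H-def} becomes the statement that $\{p_d(\cdot,t)\}_{d\ge 0}$ is an orthogonal family in $L^2(\R,e^{-x^2/(4|t|)}\,dx)$; and the classical completeness of the Hermite polynomials in $L^2(\R,e^{-y^2}dy)$ (see \cite[\S5.7]{Szego75}) transfers under the same substitution to completeness of $\{p_d(\cdot,t)\}$ in this rescaled weighted space.

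Next I would deduce the general case from Fubini. The polynomial $p_\alpha(x,t)=\prod_{i=1}^{n}p_{\alpha_i}(x_i,t)$ and the weight $\prod_{i=1}^{n}e^{-x_i^2/(4|t|)}$ both factor across coordinates, so for distinct multi-indices $\alpha\neq\beta$, choosing any index $i$ with $\alpha_i\neq\beta_i$ makes the $i$-th factor of the resulting product integral vanish by the one-dimensional orthogonality. Completeness in $n$ dimensions then follows from the standard tensor-product principle: the tensor product of orthogonal bases of one-dimensional weighted $L^2$ spaces is an orthogonal basis of the $L^2$ space associated with the product measure.

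I do not anticipate any substantive obstacle. The only subtlety is interpreting the weight correctly for $t<0$ so that the relevant integrals converge; past that, the proof amounts to an elementary change of variables and an appeal to classical facts about Hermite polynomials.
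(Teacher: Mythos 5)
Your proposal is correct and follows essentially the same route as the paper's proof: factor via Fubini across coordinates, change variables $x_i = 2|t|^{1/2}y$ to reduce to the classical Hermite orthogonality \eqref{H-def} and completeness, then invoke the tensor-product structure. Your remark that the weight must be read as $e^{-|x|^2/(4|t|)}$ for $t<0$ correctly identifies the intended interpretation, which is exactly what the paper's integrals use.
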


\begin{proof} Let $t<0$ and let $\alpha$ and $\beta$ be multi-indices in $\N^n$ with $\alpha\neq\beta$. By Fubini's theorem,
\begin{align*}
\int_{\R^n}  p_\alpha(x,t) p_\beta(x,t) e^{-\abs{x}^2/ 4\abs{t}} \, dx & = \prod_{i=1}^n \int_{\R} p_{\alpha_i}(x_i, t) p_{\beta_i}(x_i,t) e^{-x_i^2 / 4 \abs{t}} \, dx_i.
\end{align*} Thus, the left hand side vanishes if and only if at least one of the terms on the right hand side vanish. Since $\alpha\neq\beta$, there exists $1\leq i\leq n$ such that $\alpha_i\neq \beta_i$. Write $\gamma_i=(\alpha_i+\beta_i)/2$. By a simple change of variables, with $x_i=2|t|^{1/2}y$, parabolic homogeneity,  \eqref{basic-b-vs-H}, and \eqref{H-def}, \begin{equation*}\begin{split}&|t|^{-1/2}\int_{\R} p_{\alpha_i}(x_i, t) p_{\beta_i}(x_i,t) e^{-x_i^2 / 4 \abs{t}} \, dx_i= 2\int_\R p_{\alpha_i}(2|t|^{1/2}y,t) p_{\beta_i}(2|t|^{1/2}y,t)\,e^{-y^2}dy\\ &\quad =2|t|^{\gamma_i}\int_R p_{\alpha_i}(2y,-1) p_{\beta_i}(2y,-1)\,e^{-y^2}dy=2|t|^{\gamma_i}C(\alpha',\beta')\int_{\R}H_{\alpha_i}(y) H_{\beta_i}(y)\,e^{-y^2}dy=0.\end{split}\end{equation*} By a similar argument, $\{p_\alpha(\cdot,t):\alpha\text{ is a multi-index in }\N^n\}$ is an orthogonal basis for \\ $L^2(\R^n,e^{-|x|^2/4t}\,dx)$, because $\{H_d(x):d\geq 0\}$ is an orthogonal basis for $L^2(\R, e^{-x^2}dx)$.
\end{proof}

\begin{lemma}\label{lem:gen} For all $n\geq 1$ and $d\geq 0$, the set $\mathfrak{P}_{d}(\R^{n+1})=\{p_\alpha:|\alpha|=d\}$ is a basis for $\HCP{d}(\R^{n+1})$ and $\dim \HCP{d}(\R^{n+1})=\binom{n-1+d}{n-1}$. \end{lemma}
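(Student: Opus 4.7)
The plan is to deduce the basis claim by combining linear independence of $\mathfrak{P}_d(\R^{n+1})$ (immediate from Lemma \ref{cor:orth}) with a separate computation of $\dim \HCP{d}(\R^{n+1})$; since the number of multi-indices $\alpha \in \N^n$ with $|\alpha|=d$ equals $\binom{n-1+d}{n-1}$, a linearly independent family of that size is automatically a basis. The case $n=1$ reduces to the uniqueness (up to scaling) of $p_d$ established in \S\ref{sec:prop}, with the convention $\mathfrak{P}_d(\R^{1+1}) = \{p_d\}$; I will focus on $n \geq 2$.

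First I would verify that each $p_\alpha$ with $|\alpha|=d$ belongs to $\HCP{d}(\R^{n+1})$. Parabolic homogeneity of total degree $d$ follows from the homogeneity of the individual factors. For the heat equation, since $p_{\alpha_i}(x_i,t)$ depends only on $x_i$ and $t$ and satisfies $\partial_t p_{\alpha_i} = \partial_{x_i}^2 p_{\alpha_i}$, the product rule yields
\begin{equation*}
\partial_t p_\alpha = \sum_{i=1}^n \Bigl(\prod_{j \neq i} p_{\alpha_j}\Bigr) \partial_t p_{\alpha_i} = \sum_{i=1}^n \Bigl(\prod_{j \neq i} p_{\alpha_j}\Bigr) \partial_{x_i}^2 p_{\alpha_i} = \Delta_x p_\alpha.
\end{equation*}
Linear independence then follows directly from Lemma \ref{cor:orth}: any relation $\sum c_\alpha p_\alpha \equiv 0$ restricted to $t=-1$ gives a relation among the orthogonal (hence linearly independent) family $\{p_\alpha(\cdot,-1)\}$ in $L^2(\R^n, e^{-|x|^2/4}\,dx)$, forcing every $c_\alpha = 0$.

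To compute $\dim \HCP{d}(\R^{n+1})$, I would exhibit a linear isomorphism $\Phi: \HCP{d}(\R^{n+1}) \to \mathcal{P}_d(\R^n)$ sending $p \mapsto p(\cdot, 0)$, where $\mathcal{P}_d(\R^n)$ denotes the space of algebraically homogeneous polynomials of degree $d$ in $n$ spatial variables, whose dimension is $\binom{n-1+d}{n-1}$. Writing any $p \in \HCP{d}(\R^{n+1})$ as $p(x,t) = \sum_{k=0}^{\lfloor d/2 \rfloor} t^k q_k(x)$ with $q_k$ homogeneous of degree $d-2k$, the heat equation is equivalent to the recursion $(k+1)q_{k+1} = \Delta q_k$, which iterates to
\begin{equation*}
p(x,t) = \sum_{k=0}^{\lfloor d/2 \rfloor} \frac{t^k}{k!}\,\Delta^k q_0(x).
\end{equation*}
Hence $\Phi$ is injective, and conversely any $q_0 \in \mathcal{P}_d(\R^n)$ produces an hcp of degree $d$ via the same formula (the sum automatically terminates because $\Delta^k q_0 \equiv 0$ once $2k > d$), so $\Phi$ is also surjective. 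I do not expect any substantive obstacle; the one mild point of care is confirming that the inverse of $\Phi$ returns an honest polynomial rather than a formal series, but this follows from the degree count of $\Delta^k q_0$.
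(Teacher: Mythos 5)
Your argument is correct and follows the paper's high-level strategy: show that $\mathfrak{P}_d(\R^{n+1})$ is linearly independent (via Lemma~\ref{cor:orth}, exactly as the paper does), count that it has $\binom{n-1+d}{n-1}$ elements, and match this against $\dim \HCP{d}(\R^{n+1})$. The genuine difference is in how you establish the dimension: the paper simply cites \cite[Lemma 2.2]{CM21} for $\dim \HCP{d}(\R^{n+1}) = \binom{n-1+d}{n-1}$, whereas you give a self-contained proof via the caloric extension isomorphism $\Phi : p \mapsto p(\cdot,0)$, with inverse $q_0 \mapsto \sum_k \frac{t^k}{k!} \Delta^k q_0$. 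Your recursion $(k+1)q_{k+1} = \Delta q_k$ is the same as the paper's relations \eqref{cond:soln} read in the opposite indexing, and the termination of the series is exactly the degree count you note, so the argument is sound; this is the standard "caloric continuation" bijection with homogeneous polynomials of degree $d$ in the spatial variables, and is almost certainly what the cited lemma contains. Making it explicit buys you a self-contained proof at the cost of a few lines. You also verify that each $p_\alpha$ satisfies the heat equation and is parabolically homogeneous of degree $|\alpha|$, which the paper leaves implicit but which is worth spelling out as you did; the key point is that the factors $p_{\alpha_j}$ with $j \ne i$ are constant in $x_i$, so $\Delta_x p_\alpha$ decomposes the same way $\partial_t p_\alpha$ does.
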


\begin{proof} For a proof that $\dim \HCP{d}(\R^{n+1})=\binom{n-1+d}{n-1}$, see \cite[Lemma 2.2]{CM21}. The fact that $\#\mathfrak{P}_d(\R^{n+1})=\binom{n-1+d}{n-1}$ is a standard exercise; see e.g.~\cite[p.~12]{Evans-PDE}. Now, the set $\mathfrak{P}_{d}(\R^{n+1})$ is linearly independent, because $\{p_{\alpha}(\cdot,-1):|\alpha|=d\}$ is linearly independent by Lemma \ref{cor:orth}. {Indeed, if we suppose that $\sum_{\abs{\alpha} = d} c_\alpha p_\alpha(x,t) = 0$ for some coefficients $c_\alpha$, then since the $p_\alpha$ take the form \eqref{eqn:p_alpha}, we see that $p_\alpha(x,-1) \not \equiv 0$ for each $\abs{\alpha} = d$. Lemma \ref{cor:orth} implies $c_\alpha =0$, and thus $\mathfrak{P}_d(\R^{n+1})$ is linearly independent.} Therefore, $\mathfrak{P}_d(\R^{n+1})$ is a basis for $\HCP{d}(\R^{n+1})$.
\end{proof}

{Since hhps are hcps, they can using the basis provided by Lemma \ref{lem:gen}. One may initially worry whether this is actually possible, because hhps are $t$-independent, whereas the non-constant hcps in the basis are $t$-dependent. The following example illustrates how an hhp can indeed be represented using $t$-dependent hcps.}
\begin{eg} The hhp $x^2-y^2$ in $\R^2$ can be expressed as a linear combination of the basic hcps $p_1(x,t)$ and $p_1(y,t)$: $x^2-y^2=2(t+\frac{1}{2}x^2)-2(t+\frac{1}{2}y^2)$.\end{eg}

We will also need the following fact in the next section, {which essentially says that the time-slices of hcps are eigenvalues of the operator $L = -\Delta - \nabla (\log(\phi)) \cdot \nabla  $, where $\phi(x) = e^{-\abs{x}^2/4}$. The lemma speaks to the relationship between hcps and Hermite polynomials given in \eqref{basic-b-vs-H}. We include the proof for completeness.}

\begin{lemma}\label{lem:eig} If $p$ is an hcp of degree $d$ in $\R^{n+1}$, then the function $v(x):= p(x,-1)$ satisfies \begin{align}\label{eqn:eig}
-\divv( e^{-\abs{x}^2/4} \nabla v(x)) = (d/2) e^{-\abs{x}^2/4} v(x).
\end{align} %Thus, for all $f\in C^1(\R^n)\cap L^2(\R^n,e^{-x^2/4}dx)$, \begin{equation}\label{eqn:parts} \int_{\R^n}\nabla f(x)\cdot \nabla v(x)\,e^{-x^2/4}dx = \frac{d}{2}\int_{\R^n} f(x)v(x)\,e^{-x^2/4}dx.\end{equation}
\end{lemma}

\begin{proof} Indeed, for $t < 0$, we have $p(x,t)  = (-t)^{d/2} p((-t)^{-1/2}x, -1) = (-t)^{d/2} v((-t)^{1/2}x)$.
Applying the heat operator $\partial_t - \Delta$ and simplifying yields
\begin{align*}
0 & = (-t)^{\frac{d}{2}-1}\left[-(d/2)v((-t)^{-1/2}x)+(1/2)\nabla v((-t)^{-1/2}x)\cdot (-t)^{-1/2}x - \Delta v((-t)^{-1/2}x)\right].
\end{align*}
Thus, $(1/2) \nabla v(x) \cdot x -\Delta v(x) = (d/2)v(x)$. Therefore, \begin{equation*}-\divv(e^{-|x|^2/4}\nabla v(x)) =-e^{-|x|^2/4}\left[(1/2)\nabla v\cdot x-\Delta v(x)\right]=(d/2)e^{-|x|^2/4}v(x).\qedhere\end{equation*}
\end{proof}

\section{HCP in high spatial dimensions}\label{sec:high_dim}

In this section, we  establish the case $n\geq 3$ of Theorem \ref{thm:main} and prove Theorem \ref{thm:max-asymptotics}.

\begin{prop}[Theorem \ref{thm:main} when $n\geq 3$] \label{prop:high_dim}  If $\phi(x,y)$ is an hhp of degree $d\geq 1$ in $\R^{2}$ and $\psi(z,t)$ is an hcp of degree $d$ in $\R^{1+1}$, then $u(x,y,z,t):=\phi(x,y)+\psi(z,t)$ is an hcp in $\R^{3+1}$ that has exactly two nodal domains. In particular, $m_{n,d}=2$ for all $n\geq 3$ and $d\geq 2$.
\end{prop}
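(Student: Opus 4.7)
The plan is first to observe that $u=\phi+\psi$ is itself a time-dependent hcp of degree $d$ in $\R^{3+1}$, since $\phi$ is harmonic (hence caloric), both summands are parabolically homogeneous of degree $d$, and $\partial_t u=\partial_t\psi\not\equiv 0$ when $d\geq 2$. By Remark~\ref{r:meanvalue}, $\nodal(u)\geq 2$; and by the symmetry $u\mapsto -u=(-\phi)+(-\psi)$, which is again a sum of an hhp and an hcp, the upper bound $\nodal(u)\leq 2$ reduces to showing that $\{u>0\}$ is connected. The ``in particular'' clause then follows for any $n\geq 3$ by viewing $\tilde u(x_1,\dots,x_n,t):=\phi(x_1,x_2)+\psi(x_3,t)$ as a time-dependent hcp of degree $d$ in $\R^{n+1}$ whose positivity set $\{u>0\}\times\R^{n-3}$ is a product of connected sets.

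After rotating the $(x,y)$-plane, I may assume $\phi(x,y)=r^d\cos(d\theta)$ in polar coordinates, so that the $d$ open positive sectors of $\phi$ are centered on the rays $\theta=2\pi k/d$ for $k=0,\dots,d-1$. Fix $(z^*,t^*)$ with $\psi(z^*,t^*)>0$ (such a point exists because $\psi\not\equiv 0$ has $\{\psi>0\}\neq\emptyset$ by Corollary~\ref{cor:dim_1}), and for a constant $R>0$ to be chosen large define the ``basecamps''
\begin{equation*}
B_k:=\bigl(R\cos(2\pi k/d),\,R\sin(2\pi k/d),\,z^*,t^*\bigr)\in\{u>0\},\qquad k=0,\dots,d-1.
\end{equation*}
The proof then splits into (i) linking consecutive basecamps $B_k$ and $B_{k+1}$ by a path in $\{u>0\}$, and (ii) linking an arbitrary point of $\{u>0\}$ to some $B_k$.

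For (i), I pass from $B_k$ to $B_{k+1}$ in three phases: Phase A slides $(z,t)$ along the parabolic ray $(sz^*,s^2t^*)$ for $s\in[1,\Lambda]$, amplifying $\psi$ to $\Lambda^d\psi(z^*,t^*)$ while $\phi=R^d$ stays fixed; Phase B rotates $(x,y)$ at radius $R$ from angle $2\pi k/d$ to $2\pi(k+1)/d$ with $(z,t)$ held at $(\Lambda z^*,\Lambda^2 t^*)$, during which $\phi$ traverses $[-R^d,R^d]$ but $u>0$ provided $\Lambda^d\psi(z^*,t^*)>R^d$; Phase C reverses Phase A. For (ii), given $P_0=(x_0,y_0,z_0,t_0)\in\{u>0\}$, I split on the sign of $\phi(x_0,y_0)$. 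If $\phi(x_0,y_0)>0$, the parabolic dilation $s\mapsto(x_0,y_0,sz_0,s^2t_0)$ for $s:1\to 0$ keeps $u=\phi(x_0,y_0)+s^d\psi(z_0,t_0)$ positive (since $|s^d\psi(z_0,t_0)|\leq|\psi(z_0,t_0)|$ and $\phi>|\psi|$ whenever $\psi<0$), landing at $(x_0,y_0,0,0)$; I then slide $(x,y)$ within the open positive sector of $\phi$ containing $(x_0,y_0)$ to the radius-$R$ point on the central ray $\theta=2\pi k/d$, keeping $u=\phi>0$; and finally dilate $(z,t)$ parabolically from $(0,0)$ out to $(z^*,t^*)$, with $\phi=R^d$ fixed and $\psi=s^d\psi(z^*,t^*)\geq 0$. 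If instead $\phi(x_0,y_0)\leq 0$, then necessarily $\psi(z_0,t_0)>0$, and I first scale $(x,y)$ linearly to the origin via $((1-s)x_0,(1-s)y_0)$ with $(z,t)$ fixed, shrinking $|\phi|$ by $(1-s)^d$ while $\psi$ stays constant positive, then slide $(x,y)$ outward along the positive $x$-axis to $(R,0,z_0,t_0)$ where $\phi\geq 0$, reducing to the first subcase.

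The main issue to control is just bookkeeping: each path segment above must be shown to remain in $\{u>0\}$, which in every instance reduces to a routine comparison between $\phi$ and $\psi$ exploiting parabolic homogeneity (to amplify $|\psi|$ or shrink $|\phi|$ at will) together with the strict initial inequality $\phi+\psi>0$. The only nontrivial quantitative constraint is the inequality $\Lambda^d\psi(z^*,t^*)>R^d$ in Phase B of (i), which is trivially met by choosing $\Lambda$ large relative to $R$.
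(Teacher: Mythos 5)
Your proof is correct and rests on the same core mechanism as the paper's: both exploit the monotonicity of $|\phi|$ and $|\psi|$ along algebraic (resp.\ parabolic) rays from the origin to build explicit piecewise paths inside $\{u>0\}$. The paper's version is leaner: instead of amplifying $\psi$ in your Phase~B to overwhelm a possibly negative $\phi$ and instead of the case split on the sign of $\phi(x_0,y_0)$, it fixes one auxiliary point $(x_+,y_+)$ with $\phi(x_+,y_+)>0$, uses a symmetry-of-roles WLOG to assume $\psi(z_i,t_i)>0$ at both endpoints, and routes a six-segment path through $(x_+,y_+,0,0)$ along which both $\phi$ and $\psi$ stay nonnegative, so no quantitative inequality like $\Lambda^d\psi(z^*,t^*)>R^d$ is ever needed and no explicit polar normalization of $\phi$ is required.
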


\begin{proof} We modify the proof of \cite[Lemma 1.7]{BET17}, which gives a construction of hhps in $\R^n$ with two nodal domains when $n\geq 4$. The essential change is to show how to incorporate parabolic scaling. A trivial (but important!) observation is that for any $(x,y)$ and $(z,t)$, the expressions $|\phi(\lambda x,\lambda y)|=\lambda^d|\phi(x,y)|$ and $|\psi(\lambda z,\lambda^2t)|=\lambda^d|\psi(z,t)|$ are (weakly) increasing as functions of $\lambda\in[0,1]$. For any $(x,y)\neq (0,0)$, let ``the line segment from $(0,0)$ to $(x,y)$'' have its usual meaning. For any $(z,t)\neq (0,0)$, let ``the line segment from $(0,0)$ to $(z,t)$'' mean the curve described by $\{(\lambda z,\lambda^2 t): \lambda\in[0,1]\}$. Then $|\phi|$ and $|\psi|$ are increasing along line segments started at the origin and $|\phi|$ and $|\psi|$ are decreasing along line segments terminating at the origin. Also, the sets $\{\phi>0\}$, $\{\phi<0\}$, $\{\psi>0\}$, and $\{\psi<0\}$ are each nonempty.  Keeping these preliminaries in mind, we will now show that $\{u>0\}$ is path-connected. (Applying the same argument to $-u$ shows that $\{u<0\}$ is path-connected.)

Suppose that $(x_1,y_1,z_1,t_1)$ and $(x_2,y_2,z_2,t_2)$ are points at which $\phi(x_i,y_i)+\psi(z_i,t_i) >0$. Then for each $i=1,2$, at least one of the terms $\phi(x_i,y_i)$ and $\psi(z_i,t_i)$ is positive. Because the argument that follows only involves  ``line segments'' with an endpoint at the origin and the monotonicity of $|\phi|$ and $|\psi|$ along those line segments, we may suppose without loss of generality that $\psi(z_1,t_1)>0$ and $\psi(z_2,t_2)>0$. We emphasize that $\phi(x_1,y_1)$ and $\phi(x_2,y_2)$ may have any sign (positive, negative, zero). Choose any auxiliary point $(x_+,y_+)$ at which $\phi(x_+,y_+)>0$. We can build a ``piecewise linear'' path in $\{u>0\}$ from $(x_1,y_1,z_1,t_1)$ to $(x_2,y_2,z_2,t_2)$ as follows. The path is a concatenation of six ``line segments'' (see Figure \ref{fig:high_dim_paths}):%
\begin{figure}
 \begin{center}\includegraphics[width=.8\textwidth]{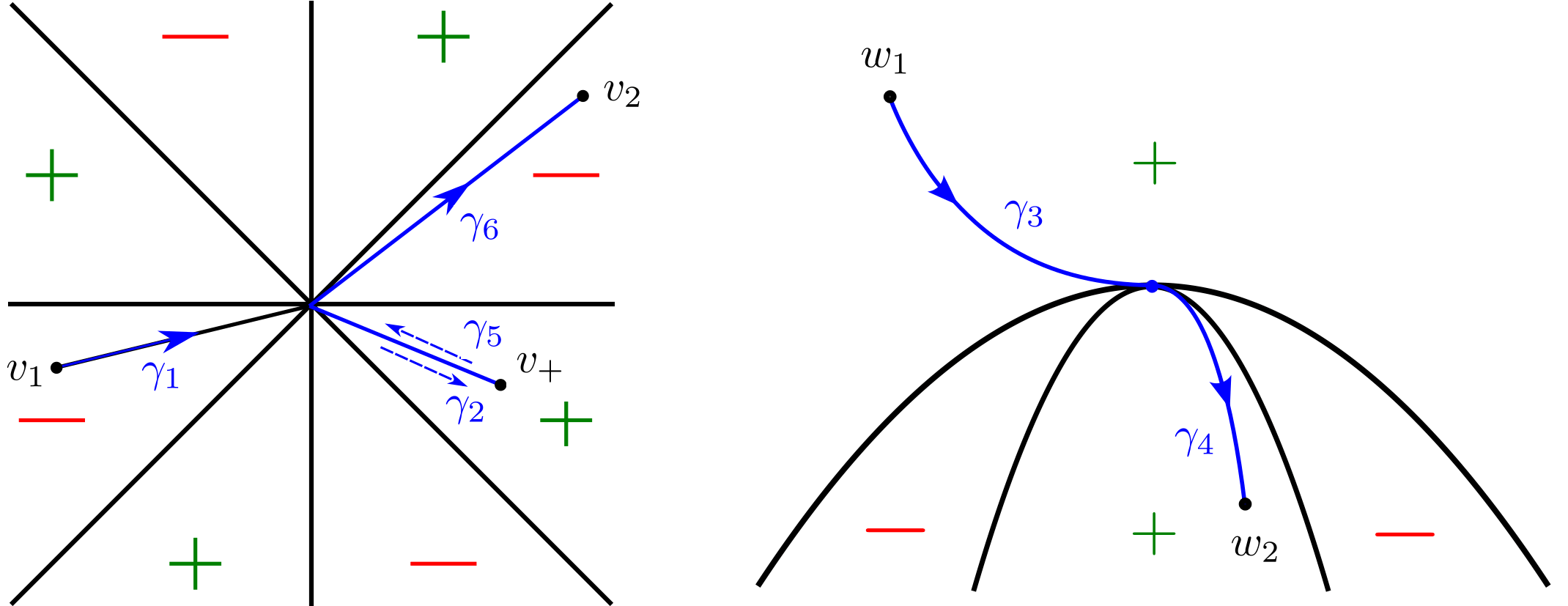}\end{center}
 \caption{Connecting $(v_1,w_1)=(x_1,y_1,z_1,t_1)$ to $(v_2,w_2)=(x_2,y_2,z_2,t_2)$ in $\R^{3+1}\cap\{u>0\}$, where $\phi(x,y)$ is a degree 4 hhp in $\R^{2}$ (left) and $\psi(z,t)$ is a degree 4 hcp in $\R^{1+1}$ (right).}\label{fig:high_dim_paths}
\end{figure}%
\begin{enumerate}
 \item First follow the line segment from $(x_1,y_1,z_1,t_1)$ to $(0,0,z_1,t_1)$.
 \item Second follow the line segment from $(0,0,z_1,t_1)$ to $(x_+,y_+,z_1,t_1)$.
 \item Third follow the line segment from $(x_+,y_+,z_1,z_2)$ to $(x_+,y_+,0,0)$.
 \item Fourth follow the line segment from $(x_+,y_+,0,0)$ to $(x_+,y_+,z_2,t_2)$.
 \item Fifth follow the line segment from $(x_+,y_+,z_2,t_2)$ to $(0,0,z_2,t_2)$.
 \item Sixth follow the line segment from $(0,0,z_2,t_2)$ to $(x_2,y_2,z_2,t_2)$.
\end{enumerate} It remains to confirm that the six line segments lie in $\{u>0\}$. The first segment lies in the positivity set, because $u(x_1,y_1,z_1,t_1)>0$ and $\psi(z_1,t_1)>0$. (If $\phi(x_1,y_1)\leq 0$, then $u$ increases from the initial point to the terminal point. If $\phi(x_1,y_1)\geq 0$, then $u$ decreases from the initial point to the terminal point, but $u$ never falls below $\psi(z_1,t_1)$.) Along the second segment, $\phi\geq 0$ and $\psi>0$, so $u>0$. Along the third segment, $\phi>0$ and $\psi\geq 0$, so $u>0$. After reversing the orientation, identical arguments show that the fourth, fifth, and sixth segments lie in $\{u>0\}$, as well.

{Altogether, we have thus shown that $m_{n,d} \le 2$ for $n \ge 3$ and $ d \ge 2$. Recalling Remark \ref{r:meanvalue}, we conclude that indeed $m_{n,d} = 2$ for such $n$ and $d$.}
\end{proof}

We now move on to the lower and upper bounds on $M_{n,d}$ for any spatial dimension.

\begin{prop}\label{prop:courant_hcp} For all $n\geq 2$ and $d\geq 2$, there exists a time-dependent hcp of degree $d$ in $\R^{n+1}$ with at least $\lfloor d/n\rfloor^n$ nodal domains.
\end{prop}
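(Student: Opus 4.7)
The plan is to take $P$ to be one of the basis elements $p_\alpha$ from Lemma \ref{lem:gen} and count its nodal domains by a simple sign-region partitioning argument. Writing $d = nk + r$ with $k = \lfloor d/n \rfloor$ and $0 \le r < n$, I would take
\begin{equation*}
P(x,t) := p_{k+1}(x_1,t) \cdots p_{k+1}(x_r, t)\, p_k(x_{r+1}, t) \cdots p_k(x_n,t),
\end{equation*}
which is $p_\alpha$ for the multi-index $\alpha$ having $r$ entries equal to $k+1$ and $n-r$ entries equal to $k$. By Lemma \ref{lem:gen}, $P$ is an hcp of degree $r(k+1) + (n-r)k = d$, and when $k \ge 2$ each factor is time-dependent, so $P$ is as well. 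The degenerate range $d < 2n$ (in which $k \in \{0,1\}$ and hence $\lfloor d/n \rfloor^n \le 1$) I handle separately with $p_d(x_1,t)$, a time-dependent hcp of degree $d \ge 2$ with $\nodal(p_d) = 2\lceil d/2 \rceil \ge 2$ by Corollary \ref{cor:dim_1}.

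For the main case $k \ge 2$, Theorem \ref{lem:hcp_dim_1} shows that each factor $p_{k_i}(x_i,t)$ partitions the $(x_i,t)$-plane into $r_{k_i} := 2\lceil k_i/2 \rceil \ge k$ open sign-regions $R_i^{(1)}, \ldots, R_i^{(r_{k_i})}$. For each tuple $j = (j_1, \ldots, j_n)$ with $1 \le j_i \le r_{k_i}$, I form the open product region
\begin{equation*}
S_j := \{(x,t) \in \R^{n+1} : (x_i, t) \in R_i^{(j_i)} \text{ for all } i = 1, \ldots, n\}.
\end{equation*}
These $S_j$ disjointly partition $\{P \ne 0\}$, and each is non-empty: at any fixed $t_0 < 0$, the explicit parabolic form in Theorem \ref{lem:hcp_dim_1} makes it immediate that every slice $R_i^{(j_i)} \cap \{t = t_0\}$ is a non-empty open subset of the $x_i$-axis, and one can pick $x_i$ from each slice independently to produce a point of $S_j$.

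The crux is that each nodal domain of $P$ sits inside a single $S_j$: if some factor $p_{k_i}$ were to change sign along a path in a nodal domain, it would vanish at an intermediate point by the intermediate value theorem, forcing $P = 0$ there and contradicting the hypothesis that the path lies in $\{P \ne 0\}$. Equivalently, $\{P \ne 0\} = \bigsqcup_j S_j$ is a disjoint union of open sets, so every connected component is contained in a single $S_j$. Therefore
\begin{equation*}
\nodal(P) \;\ge\; \#\{\text{non-empty } S_j\} \;=\; \prod_{i=1}^n r_{k_i} \;\ge\; k^n \;=\; \lfloor d/n \rfloor^n.
\end{equation*}

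I expect no real obstacle. The inequality above is loose---each $S_j$ is typically disconnected in $\R^{n+1}$ for $n \ge 2$, so $P$ usually has strictly more than $k^n$ nodal domains---but the coarse count suffices for the proposition. The only minor caveats are handling the range $d < 2n$ separately (since $p_1(x,t) = x$ fails to be time-dependent) and verifying non-emptiness of the $S_j$'s at a common negative time slice, both of which follow directly from the structure of $p_{k_i}$ given by Theorem \ref{lem:hcp_dim_1}.
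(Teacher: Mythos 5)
Your proof is correct and takes essentially the same approach as the paper: both construct the candidate hcp as a product of one-dimensional hcps (a basis element $p_\alpha$ with $|\alpha|=d$) and lower-bound $\nodal$ by observing that the sign pattern of the factors is constant on each nodal domain, with $\lfloor d/n\rfloor^n$ coming from $\nodal(p_k)=2\lceil k/2\rceil\geq k$. The only differences are cosmetic---the paper places the excess degree $d-(n-1)\lfloor d/n\rfloor$ in a single factor $q(x_n,t)$ and phrases the count as a curve-crossing argument rather than a disjoint open partition---and you are in fact slightly more explicit than the paper about verifying that every sign-pattern region is realized (via a common negative time slice).
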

\begin{proof}
Let $c =  \lfloor d/n \rfloor$. If $c\leq 1$, then the conclusion is trivial. Thus, we may assume that $c\geq 2$. Let $p$ be an hcp in $\R^{1+1}$ of degree $c$ and let $q$ be an hcp in $\R^{1+1}$ of degree $b=d - (n-1)c \ge c$. A direct computation shows that $u(x,t):= q(x_n,t)\prod_{i=1}^{n-1} p(x_i, t)$ is an hcp of degree $d$. Moreover, $u$ is time-dependent, since $c\geq 2$ implies $p$ and $q$ are time-dependent. By Corollary \ref{cor:dim_1},
\begin{equation} \label{eqn:nprod}
\nodal(u) \ge \nodal(p)^{n-1}\nodal(q)\geq c^{n-1}b\geq c^n.
\end{equation}
We can verify the first inequality in \eqref{eqn:nprod} as follows. Suppose that $(x,t)$ and $(y,s)$ are points in $\R^{n+1}$ such that $u(x,t)\neq 0$, $u(y,s)\neq 0$, and \begin{enumerate}
\item[(a)] $(x_i,t)$ and $(y_i,s)$ belong to different nodal domains of $p$ for some $1\leq i\leq n-1$, or
\item[(b)] $(x_n,t)$ and $(y_n,s)$ belong to different nodal domains of $q$.
\end{enumerate} If (a) holds, assign $j=i$ and $r=p$; otherwise, if (b) holds, assign $j=n$ and $r=q$. Let $\gamma:[0,1]\rightarrow\R^{n+1}$ be any curve such that $\gamma(0)=(x,t)$ and $\gamma(1)=(y,s)$ and let $\pi$ be the orthogonal projection onto the $e_je_{n+1}$-plane. Then $\pi\circ \gamma:[0,1]\rightarrow\R^{1+1}$ is a curve connecting $(x_j,t)$ to $(y_j,s)$. Since $(x_j,t)$ and $(y_j,s)$ lie in different nodal domains of $r$, it follows that $r(\pi(\gamma(c)))=0$ for some $c\in(0,1)$. Thus, $u(\gamma(c))=0$, as well, by definition of $u$. Since $\gamma$ was an arbitrary curve connecting $(x,t)$ to $(y,s)$, we conclude that $(x,t)$ and $(y,s)$ lie in different nodal domains of $u$.

{We conclude as follows. Suppose that $v:\R^{n+1} \ra \R$ is a continuous function, and define $\mathfrak{N}(v)$ to be the set of nodal sets of the function $v$. Then we can identify each $V \in \mathfrak{N}(v)$ by an equivalence class of points in $V$ as follows. We say $(x,t) \sim_v (y,s)$ if there is a continuous path $\gamma:[0,1] \ra \R^{n+1}$ so that $\gamma(0) = (x,t)$, $\gamma(1) = (y,s)$, and $\abs{v(\gamma(p))} \ne 0$ for $p \in [0,1]$ (so, $\mathrm{Im}(\gamma)$ must be contained in exactly one nodal domain of $v$). For the functions $q$ and $p$, we have even more; each nodal domain of $p, q$ intersects the hyperplane $\{t = -1\}$. See Lemma \ref{negative-times} that follows, or for the simple one-dimensional case, refer to Figure \ref{fig:hcp_1d}. In particular each element of $\mathfrak{N}(p(x_i, -1))$ (resp. $\mathfrak{N}(q(x_n, t)))$ is identified by a point $(y_i, -1)$ (resp. $(y_n, -1)$). Then we define the mapping $I: \mathfrak{N}(q(x_n,t)) \times \prod_{i=1}^{n-1} \mathfrak{N}(p(x_i, t)) \ra \mathfrak{N}(u)$ by $((y_1, -1), \dotsc, (y_n, -1)) \ra (y_1, \dotsc, y_n, -1)$. The previous paragraph showed that this mapping $I$ is injective, and so we conclude the first inequality in \eqref{eqn:nprod}, and thus the proof of the proposition.}
\end{proof}

\begin{eg} Let $u(x,y,t)=(2t+x^2)(2t+y^2)$. Then $\nodal(u)=6>4=\nodal(2t+x^2)\nodal(2t+y^2)$. Thus, the first inequality in \eqref{eqn:nprod} can be strict. \end{eg}

The next lemma will help us bound the number of nodal domains of an hcp from above.

\begin{lemma}\label{negative-times} If $u$ is an hcp of degree $d\geq 1$ in $\R^{n+1}$, then every nodal domain of $u$ intersects the hyperplane $\{t=-1\}$. Thus, $\mathcal{N}(u)$ is at most the number of connected components of $\{x\in\R^n:u(x,-1)\neq 0\}$ in $\R^n$.\end{lemma}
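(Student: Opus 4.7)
The plan is to combine parabolic homogeneity (which forces a nodal domain to be invariant under the parabolic scaling $(x,t)\mapsto(\lambda x,\lambda^2 t)$, $\lambda>0$) with a Phragm\'en--Lindel\"of-type maximum principle for caloric functions on unbounded parabolic domains. Let $D$ be a nodal domain of $u$; assume without loss of generality $u>0$ on $D$. For any $(x_0,t_0)\in D$, parabolic homogeneity gives $u(\lambda x_0,\lambda^2 t_0)=\lambda^d u(x_0,t_0)\neq 0$ for all $\lambda>0$, and the connected scaling curve $\lambda\mapsto(\lambda x_0,\lambda^2 t_0)$ meets $D$ at $\lambda=1$, so the whole orbit lies in $D$.

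Consider the time projection $T:=\{s\in\R:D\cap(\R^n\times\{s\})\neq\emptyset\}$. Since $D$ is open and connected, $T$ is a nonempty open interval in $\R$; by the scaling observation, $T$ is invariant under $s\mapsto\lambda^2 s$ for every $\lambda>0$. The only open intervals with this scale-invariance are $\R$, $(-\infty,0)$, and $(0,\infty)$. In the first two cases $-1\in T$, so $D\cap\{t=-1\}\neq\emptyset$ as required.

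The heart of the argument is excluding $T=(0,\infty)$, i.e.\ $D\subseteq\{t>0\}$. I would first show $u\equiv 0$ on $\overline D\cap\{t=0\}$: if $(x_*,0)\in\overline D$ had $u(x_*,0)>0$, then an open ball $B$ about $(x_*,0)$ would lie in $\{u>0\}$; since $(x_*,0)$ is a limit point of $D$, the intersection $B\cap D$ is nonempty, so $B\cup D$ is a connected subset of $\{u\neq 0\}$, forcing $B\subseteq D$ by maximality, which contradicts $D\subseteq\{t>0\}$ because $B$ contains points with $t<0$. Combined with $u\geq 0$ on $\overline D$ by continuity, this gives $u=0$ on $\overline D\cap\{t=0\}$. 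Now fix any $T>0$ with $D_T:=D\cap\{0<t<T\}\neq\emptyset$; the parabolic boundary $\partial_p D_T$ is the union of the nodal-set ``sides'' $\partial D\cap\{0<t<T\}$ and the ``bottom'' $\overline D\cap\{t=0\}$, on both of which $u=0$. Since $u$ is a polynomial and therefore has subexponential growth in $x$, the parabolic maximum principle in its Phragm\'en--Lindel\"of form yields $\sup_{\overline{D_T}}u\leq\sup_{\partial_p D_T}u=0$, contradicting $u>0$ on $D_T$.

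The main obstacle is this last step, because $D$ (and hence $D_T$) is generally unbounded in the spatial directions, so the ordinary bounded-domain maximum principle is insufficient; however, polynomial growth of $u$ is well within the subexponential bound permitting the Phragm\'en--Lindel\"of extension. The final assertion, that $\mathcal{N}(u)$ is bounded by the number of connected components of $\{x\in\R^n:u(x,-1)\neq 0\}$, then follows immediately: distinct nodal domains of $u$ intersect $\{t=-1\}$ in disjoint nonempty open sets, each contained in a single connected component of the $t=-1$ slice.
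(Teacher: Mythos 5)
Your proof is correct but takes a genuinely different route from the paper's at the crucial step. Both arguments reduce the lemma, via the parabolic-scaling observation that the orbit $\lambda\mapsto(\lambda x_0,\lambda^2 t_0)$ stays within one nodal domain, to the claim that no nodal domain of $u$ lies entirely in $\{t>0\}$; your classification of the scale-invariant time projection $T$ is just a slightly different packaging of what the paper states as ``by parabolic scaling, it suffices to prove that every nodal domain of $u$ intersects $\{t<0\}$.'' Where you diverge is in ruling out a nodal domain $D\subset\{t>0\}$. The paper builds an explicit auxiliary hcp $f(x,t)=\sum_{i=1}^n p_{2d}(x_i,t)$ of degree $2d$ that is strictly positive for $t\geq 0$, forms $v_\epsilon=u-\epsilon f$, and observes that the higher degree of $f$ forces any component $V$ of $\{v_\epsilon>0\}\cap D$ to be \emph{bounded} with $v_\epsilon\leq 0$ on $\partial V$; the ordinary bounded-domain parabolic maximum principle then gives the contradiction. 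You instead work directly on the unbounded truncation $D_T=D\cap\{0<t<T\}$, verify the vanishing of $u$ on the parabolic boundary (your argument that $u\equiv 0$ on $\overline D\cap\{t=0\}$ is sound), and appeal to a Phragm\'en--Lindel\"of maximum principle, which applies because a polynomial is well within the Tychonoff growth class $O(e^{a|x|^2})$. The trade-off: your route is shorter once Phragm\'en--Lindel\"of for caloric functions on general space-time domains in a strip is granted, while the paper's barrier construction keeps the argument entirely self-contained, needing only the bounded-domain maximum principle.
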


\begin{proof} By parabolic scaling, it suffices to prove that every nodal domain of $u$ intersects the lower half-space $\{t<0\}$. Suppose to get a contradiction that there exists a nodal domain $U$ of $u$ such that $U\subset\{t\geq 0\}$. In fact, observe that $U\subset\{t>0\}$, since $U$ is open. Replacing $u$ by $-u$, if necessary, we may further assume that $U\subset\{u>0\}$. Write $$\mathbb{E}^n_+:=\{(x,t)\in\R^{n+1}:x_1^2+\cdots+x_n^2+|t|=1\text{ and }t\geq 0\}.$$ Define an auxiliary hcp of degree $2d$ in $\R^{n+1}$  by $f(x,t):=\sum_{i=1}^n p_{2d}(x_i,t)$. By Theorem \ref{lem:hcp_dim_1}, there exist constants $a_1,\dots,a_d>0$ such that $f(x,t)=\sum_{i=1}^n (t+a_1x_i^2)\cdots (t+a_dx_i^2).$ Hence $f(x,t)>0$ for all $(x,t)\in \mathbb{E}^n_+$. Thus, because $f$ is continuous and $\mathbb{E}^n_+$ is compact, we can find $c>0$ such that $f(x,t)>c$ for all $(x,t)\in \mathbb{E}^n_+$.

To proceed, choose $\epsilon>0$ small enough so that $v_\epsilon := u - \epsilon f$ is positive at some point in $U$. Let $V$ be any connected component of $U\cap\{v_\epsilon>0\}$. If $(x,t)\in V$ and $x_1^2+\cdots+x_n^2+|t|=\lambda^2$, then $(x/\lambda,t/\lambda^2)\in \mathbb{E}^n_+$ {(since $(x,t) \in V \subset U$ implies that $t > 0$)} and we see that
\begin{align*}
v_\epsilon(x,t) & = \lambda^d u(x/\lambda, t/\lambda^2) - \epsilon \lambda^{2d} f(x/\lambda, t/\lambda^2) \le \lambda^d(\norm{u}_{L^\infty(\mathbb{E}^n_+)} - \lambda^{d} \epsilon c) < 0
\end{align*} whenever $\lambda\gg 1$ (depending only on $d$, $c$, $\epsilon$, and $\|u\|_{L^\infty(\mathbb{E}^n_+)}$). This shows that $V$ is bounded. Furthermore, since $\partial V\subset \partial U\cup \{v_\epsilon=0\}\subset\{u=0\}\cup\{v_\epsilon=0\}$, we have $v_\epsilon\leq 0$ on $\partial V$. Therefore, $v_\epsilon\leq 0$ throughout $V$ by the maximum principle for solutions of the heat equation. This contradicts our assertion that $v_\epsilon$ is positive at some point of $V$. \end{proof}

\begin{eg} For all $k\geq 1$, the basic hcp $p_{2k}(x,t)$ of degree $2k$ in $\R^{1+1}$ has $\mathcal{N}(p_{2k})=2k$, but $\mathcal{N}(p_{2k}|\{t=-1\})=2k+1$. See Figure \ref{fig:hcp_1d}. \end{eg}

We now are ready to present an analogue of Courant's nodal domain theorem for the negative time-slices of hcps, which gives the upper bound on $M_{n,d}$. For the original version of Courant's theorem, see \cite[Chapter VI, \S6]{CH}; {for a version associated to the so-called Witten-Laplacian $L = -\Delta - \nabla(\log(\phi)) \cdot  \nabla$,  introduced right before Lemma \ref{lem:eig}, see \cite{CM25}. Unfortunately the result in the latter reference is only stated for bounded domains, so for completeness we find it easiest to provide a short proof in our specific case.}
% However the reader should note that with Lemma \ref{lem:eig} and \cite{CM25}, one morally obtains the following result.

%With the orthonormal basis of time slices of hcps as above, we may now prove our upper bound on $M_{n,d}$ as in Theorem \ref{thm:main}. Notice that for $d$ large, $\binom{n+d}{n} \le C_n d^n$, so that the upper bound on $M_{n,d}$ obtained in the following Theorem has the same order of growth (in $d$) as the lower bound obtained in Proposition \ref{prop:courant_hcp}.

\begin{thm}\label{thm:courant}
If $u$ is an hcp of degree $d$ in $\R^{n+1}$, then $\nodal(u) \le \binom{n+d}{n}$.
\end{thm}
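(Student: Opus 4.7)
My proof plan is to use Lemma \ref{negative-times} to reduce to a Courant-type nodal domain argument applied to the time slice $v(x):=u(x,-1)$. By that lemma, it suffices to show that $v$ has at most $\binom{n+d}{n}$ nodal domains in $\R^n$. The key input, recorded in \eqref{eqn:eig}, is that $v$ is an eigenfunction with eigenvalue $d/2$ of the self-adjoint weighted operator
$$L\phi := -e^{|x|^2/4}\divv\bigl(e^{-|x|^2/4}\nabla\phi\bigr)\quad\text{on }L^2\bigl(\R^n,\, e^{-|x|^2/4}\,dx\bigr).$$

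First I would identify the full spectrum of $L$. The same integration-by-parts used to derive \eqref{eqn:eig} shows that $p_\alpha(\cdot,-1)$ is an $L$-eigenfunction with eigenvalue $|\alpha|/2$ for every $\alpha\in\N^n$. By Lemma \ref{cor:orth} these functions form an orthogonal basis of the weighted $L^2$ space, so the spectrum of $L$ is $\{k/2:k\in\N\}$, with the eigenspace at level $k/2$ of dimension $\binom{n-1+k}{n-1}$ (cf.\ Lemma \ref{lem:gen}). The hockey-stick identity then gives that the number of eigenvalues of $L$ (counted with multiplicity) at most $d/2$ equals
$$m := \sum_{k=0}^d \binom{n-1+k}{n-1}=\binom{n+d}{n}.$$

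Next I would run the standard Courant argument in this Gaussian-weighted setting. Suppose toward a contradiction that $v$ has at least $m+1$ nodal domains $U_1,\dots,U_{m+1},\dots$ and define the truncations $v_i:=v\,\mathbf{1}_{U_i}$ for $i=1,\dots,m+1$. Since $v$ is a polynomial that vanishes on $\partial U_i$, each $v_i$ lies in the form domain of $L$, and integrating \eqref{eqn:eig} against $v_i$ over $U_i$ yields the Rayleigh identity $\int|\nabla v_i|^2 e^{-|x|^2/4}\,dx=(d/2)\int v_i^2 e^{-|x|^2/4}\,dx$. The $v_i$ are pairwise $L^2$-orthogonal and linearly independent, so I can choose a nonzero combination $w=\sum_{i=1}^m c_i v_i$ (in particular, setting $c_{m+1}=0$) that is also orthogonal to the first $m-1$ eigenfunctions in a chosen enumeration. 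The min-max characterization of the $m$-th eigenvalue combined with the Rayleigh identity forces $w$ to attain the minimum quotient $d/2$ under these constraints, so $w$ must itself be an $L$-eigenfunction with eigenvalue $d/2$. But the eigenspace at $d/2$ consists of polynomials (linear combinations of $p_\alpha(\cdot,-1)$ with $|\alpha|=d$), while $w\equiv 0$ on the nonempty open set $U_{m+1}$; hence $w\equiv 0$, contradicting the linear independence of $v_1,\dots,v_m$.

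The main obstacle I expect is justifying the variational framework for $L$ on the non-compact weighted space, specifically the min-max characterization of the eigenvalues and the fact that a constrained Rayleigh-quotient minimizer is an eigenfunction. These facts are classical for the Ornstein–Uhlenbeck operator, which has compact resolvent with the polynomial basis $\{p_\alpha(\cdot,-1)\}$; a clean reduction is to conjugate by the Gaussian, $\phi(x):=e^{-|x|^2/8}v(x)$, which transforms \eqref{eqn:eig} into a harmonic-oscillator Schr\"odinger eigenvalue equation on the unweighted space $L^2(\R^n,dx)$, where the spectral-theoretic and Courant-type results are standard.
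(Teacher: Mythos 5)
Your proof is correct and shares the same skeleton as the paper's: reduce to the $t=-1$ slice via Lemma \ref{negative-times}, exploit the eigenvalue equation \eqref{eqn:eig}, and work with the orthogonal basis $\{p_\alpha(\cdot,-1)\}$ of $L^2(\R^n,e^{-|x|^2/4}dx)$ to count the multiplicity of low-lying eigenvalues. Where you diverge is in how the Courant step is executed. The paper imposes the full set of $\binom{n+d}{n}$ orthogonality constraints (to every $p_\alpha$ with $|\alpha|\le d$) on a linear combination of all $k>\binom{n+d}{n}$ truncations $q\chi_{V_i}$; this forces the remaining piece $g$ to lie in $\Span\{p_\alpha:|\alpha|>d\}$, so term-by-term integration by parts gives $\int|\nabla g|^2 e^{-|x|^2/4}\ge \frac{d+1}{2}\int g^2 e^{-|x|^2/4}$, immediately clashing with the Rayleigh identity $\int|\nabla g|^2 e^{-|x|^2/4}=\frac{d}{2}\int g^2 e^{-|x|^2/4}$. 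You instead run the textbook Courant argument: impose only $m-1$ constraints on $m$ truncations, observe the Rayleigh quotient saturates the min--max bound $\lambda_m=d/2$, invoke the Euler--Lagrange/equality analysis to conclude $w$ is an eigenfunction, and then appeal to the polynomial structure of the $d/2$-eigenspace for unique continuation. Both are valid, but the paper's choice of imposing the extra orthogonality buys a strict numerical gap ($d/2$ versus $\ge(d+1)/2$) that makes the contradiction elementary and fully explicit; it never needs the variational characterization of eigenvalues, the compact-resolvent/Euler--Lagrange machinery you flag as the main obstacle, or any unique continuation. Your version is the more standard route and is also fine once those functional-analytic ingredients are justified (as you correctly note they can be, e.g.\ by conjugating to the harmonic oscillator).
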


\begin{proof} Let $H$ denote the Hilbert space $L^2(\R^n,e^{-x^2/4}dx)$. By Lemma \ref{cor:orth},  as $\alpha$ ranges over all multi-indices in $\N^n$, the polynomials $p_\alpha(x):=p_\alpha(x,-1)$ form an orthogonal basis for $H$. Assign $q(x):=u(x,-1)$. By Lemma \ref{negative-times}, $\nodal(u)\leq\nodal(q)$. Thus, to establish the theorem, it suffices to prove that $k:=\nodal(q)\leq \binom{n+d}{n}=\#\{p_\alpha(x):|\alpha|\leq d\}$. Enumerate the nodal domains of $q$ by $V_1, \dotsc, V_k \subset \R^n$. We suppose for the sake of contradiction that $k>\binom{n+d}{n}$. Consider the homogeneous system of $\binom{n+d}{n}$ linear equations $\sum_{i=1}^k c_{\alpha,i}a_i=0$, where $$c_{\alpha,i}:=\int_{\R^n} q(x)\chi_{V_i(x)} p_\alpha(x)\,e^{-x^2/4}dx\quad\text{for all $|\alpha|\leq d$ and $1\leq i\leq k$}.$$ Since $k>\binom{n+d}{n}$, we can find a non-zero solution vector $(a_1,\dots,a_k)$. By definition of the linear system, the function $g$ defined by $g(x):=\sum_{i=1}^k a_iq(x)\chi_{V_i}(x)$ is orthogonal to $p_\alpha$ in $H$ for all $|\alpha|\leq d$. On the one hand, by \eqref{eqn:eig}, {the fact that $q|_{\partial V_i} = 0$}, and integration by parts,
\begin{equation}\label{eqn:eigp}
\begin{split}
&\int_{\R^n} \abs{\nabla g(x)}^2 e^{-\abs{x}^2/4} \; dx  = \sum_{i=1}^k a_i^2 \int_{V_i} \abs{\nabla q(x)}^2 e^{-\abs{x}^2/4} \; dx\\
&\qquad= \sum_{i=1}^k a_i^2  \int_{V_i} \frac{d}{2} q(x)^2 e^{-\abs{x}^2/4} \; dx = \frac{d}{2} \int_{\R^n} g(x)^2 e^{-\abs{x}^2/4} \; dx.
\end{split}
\end{equation}
On the other hand, we can expand $g$ with respect to the orthogonal basis $\{p_\alpha\}$ for $H$, say $g(x) = \sum_{\abs{\alpha} > d} b_\alpha p_\alpha(x)$
for some coefficients $b_\alpha$. Now, by \eqref{eqn:eig} and integration by parts
\begin{align*}
\int_{\R^n} \nabla_x p_\alpha(x)\cdot \nabla_x p_\beta(x) e^{-\abs{x}^2/4} \, dx & = \int_{\R^n} \dfrac{\abs{\beta}}{2} p_\alpha(x) p_\beta(x) e^{-\abs{x}^2/4} \, dx = 0\quad\text{for all }\alpha\neq\beta.
\end{align*} Thus, using \eqref{eqn:eig} and integration by parts once more,
\begin{equation}\begin{split} \label{eqn:eigp2}
&\int_{\R^n} \abs{\nabla g(x)}^2 e^{-\abs{x}^2/4} \; dx  = \sum_{\abs{\alpha} > d} b_\alpha^2 \int_{\R^n}  \abs{\nabla_x p_\alpha(x)}^2 e^{-\abs{x}^2/4} \; dx \\
&\qquad= \sum_{\abs{\alpha} > d} b_\alpha^2 \int_{\R^n} \dfrac{\abs{\alpha}}{2} p_\alpha(x)^2 e^{-\abs{x}^2/4} \; dx \ge \dfrac{d+1}{2}  \int_{\R^n} g(x)^2 e^{-\abs{x}^2/4} \; dx.
\end{split}\end{equation} Since $g\not\equiv 0$, \eqref{eqn:eigp} and \eqref{eqn:eigp2} are incompatible. Therefore, $k \le \binom{n+d}{n}$. \end{proof}

Note that Proposition \ref{prop:courant_hcp} and Theorem \ref{thm:courant} yield Theorem \ref{thm:max-asymptotics}.

\section{HCP in \texorpdfstring{$\R^{2+1}$}{2+1 dimensions}, Part I: lower bounds}\label{sec:dim_3_low}

To complete the proof of Theorem \ref{thm:main}, it remains to determine the minimum possible number of nodal domains for hcps in $\R^{2+1}$, where the story is more complicated than in $\R^{1+1}$ (see Corollary \ref{cor:dim_1}) and in high enough spatial dimensions (see Proposition \ref{prop:high_dim}). In this section, we aim to show that $m_{2,4k}\geq 3$ for all $k\geq 1$, i.e.~the number of nodal domains of an hcp in $\R^{2+1}$ of degree $d\geq 4$ with $d \equiv 0 \pmod 4$ is at least 3.

Towards our goal, we first lower bound the number of nodal domains of a continuous function in $\R^2$ with ``alternating nodal structure'' at the origin and at infinity. A \emph{chamber of $u$ in $V$} is a connected component of $V \cap \{ u \ne 0\}$ relative to $V$. A \emph{positive chamber} is a chamber on which $u>0$; a \emph{negative chamber} is a chamber on which $u<0$.

\begin{lemma}\label{lem:cont_comp}
Suppose that $u : \R^2 \ra \R$ is continuous, $u(0) = 0$, and $u$ has the following nodal structure near the origin and near infinity for some integers $\nin,\nout\geq 1$ with $\nin+\nout\geq 3$:
\begin{itemize}
\item There exists $\epsilon>0$ (small) such that in $B_\epsilon(0)$, the chambers of $u$ are sectors based at the origin, $u$ alternates signs on adjacent chambers, and $u$ is positive on $\nin$ of the chambers. (When $\nin=1$, we allow either $0$ or $1$ negative chambers.)
\item There exists $M>\epsilon$ (large) such that in $B_M(0)^c$, the chambers of $u$ are sectors extending off to infinity, $u$ alternates signs on adjacent chambers, and $u$ is positive on $\nout$ of the chambers. (When $\nout=1$, we allow either $0$ or $1$ negative chambers.)
\end{itemize}
The number of nodal domains of $u$ is at least $\nin + \nout$.
\end{lemma}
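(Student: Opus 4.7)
My approach is to compactify $\R^2$ to the sphere $S^2 = \R^2 \cup \{\infty\}$ and count nodal domains via Euler's formula for planar graphs. Let $K \subset S^2$ denote the closure of $\{u=0\}$, so that $0, \infty \in K$ and the nodal domains of $u$ on $\R^2$ correspond bijectively to the connected components of $S^2 \setminus K$. The hypotheses force $K$ to have $2\nin$ radial ``spokes'' emanating from $0$ and $2\nout$ spokes emanating from $\infty$, so it suffices to show $S^2 \setminus K$ has at least $\nin + \nout$ components.

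The crucial reduction is to arrange that $K$ is (topologically) a finite embedded graph $G \subset S^2$. I would achieve this by modifying $u$ inside the annulus $A = \overline{B_M} \setminus B_\epsilon$: replace $u|_A$ by a piecewise-linear interpolation on a fine triangulation of $A$ whose vertices include the $2\nin + 2\nout$ zero points on $\partial A$, and then apply a generic perturbation of the interior vertex values so that the resulting nodal set in $A$ becomes a topological graph meeting $\partial A$ only at the prescribed points. The central technical claim---and the principal obstacle in the whole proof---is that such a modification can be carried out without increasing the number of nodal domains: heuristically, smoothing out wild behavior in a nodal set can only merge chambers, never split them.

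Once $K$ is realized as a graph $G$, suppress any vertices of degree $2$ other than $0$ and $\infty$, so that $\deg_G(0) = 2\nin$, $\deg_G(\infty) = 2\nout$, and every other vertex of $G$ has degree at least $3$. Writing $V, E, F, C$ for the numbers of vertices, edges, faces, and connected components of $G$, the generalized Euler relation on $S^2$ reads $V - E + F = 1 + C$, while the handshake lemma yields
\[
 2E \;=\; \sum_v \deg_G(v) \;\geq\; 2\nin + 2\nout + 3(V - 2).
\]
Substituting into Euler's relation,
\[
 F \;=\; 1 + C - V + E \;\geq\; \nin + \nout + C + \tfrac{V - 4}{2} \;\geq\; \nin + \nout,
\]
where the last inequality uses $V \geq 2$ and $C \geq 1$. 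The degenerate cases where $\nin = 1$ with no negative near-origin chamber (so that $0$ becomes an isolated vertex of $K$), and the symmetric variant at $\infty$, are dispatched by first removing the isolated vertex---which does not change the component count of $S^2 \setminus K$---and then rerunning the same degree-based count with one fewer vertex and one fewer connected component. The bound $\nin + \nout \geq 3$ in the hypothesis ensures that in these edge cases the remaining special vertex still has enough spokes for the inequality to be nontrivial.
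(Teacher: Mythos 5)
Your plan to compactify to $\sphere^2$ and count faces via Euler's formula is genuinely different from the paper's proof, which is a direct induction on $\nin+\nout$ using the Jordan curve theorem (connect two same-sign chambers by a simple closed arc through the origin, use it to separate the intermediate chambers of opposite sign, then inductively collapse the chain of chambers it encloses). The Euler-formula computation you outline is essentially sound once the nodal set is realized as a finite embedded graph with only the two special vertices of degree $2\nin,2\nout$ and all others of degree $\geq 3$. The arithmetic, including the degenerate cases where $0$ or $\infty$ is isolated, checks out.

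The gap is precisely where you flag it: the reduction to a finite embedded graph. You propose to replace $u$ on the annulus $A=\overline{B_M}\setminus B_\epsilon$ by a piecewise-linear interpolation on a fine triangulation (plus a generic perturbation of interior vertex values) and assert that this \emph{cannot increase} the number of nodal domains, so that proving the lower bound for the modified function suffices. That assertion is false. The hypotheses of the lemma control $u$ only near $0$ and near $\infty$; inside $A$, $u$ is an arbitrary continuous function, and its positivity set can contain a single connected chamber shaped like a thin plus-sign or spiral whose arms are narrower than any prescribed triangulation mesh. A PL interpolation based on such a mesh will sample negative values across the arms and thereby \emph{split} that one positive chamber into several components, strictly increasing the count of nodal domains. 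Because the shape of $u$'s chambers in $A$ is unconstrained, no fixed finite triangulation can be guaranteed to avoid this. Without the monotonicity claim, the argument transfers the lower bound in the wrong direction: from the (possibly inflated) count for the PL surrogate you can conclude nothing about the count for $u$. A secondary, smaller issue is that after suppressing degree-$2$ vertices you tacitly assume every remaining non-special vertex has degree $\geq 3$, whereas a priori degree-$1$ (dead-end) vertices could survive; for a generic PL perturbation this can be ruled out, but it deserves a sentence. The main gap, however, is the unjustified and in fact incorrect monotonicity of nodal-domain count under PL replacement.
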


\begin{figure}
\begin{center}\includegraphics[width=.8\textwidth]{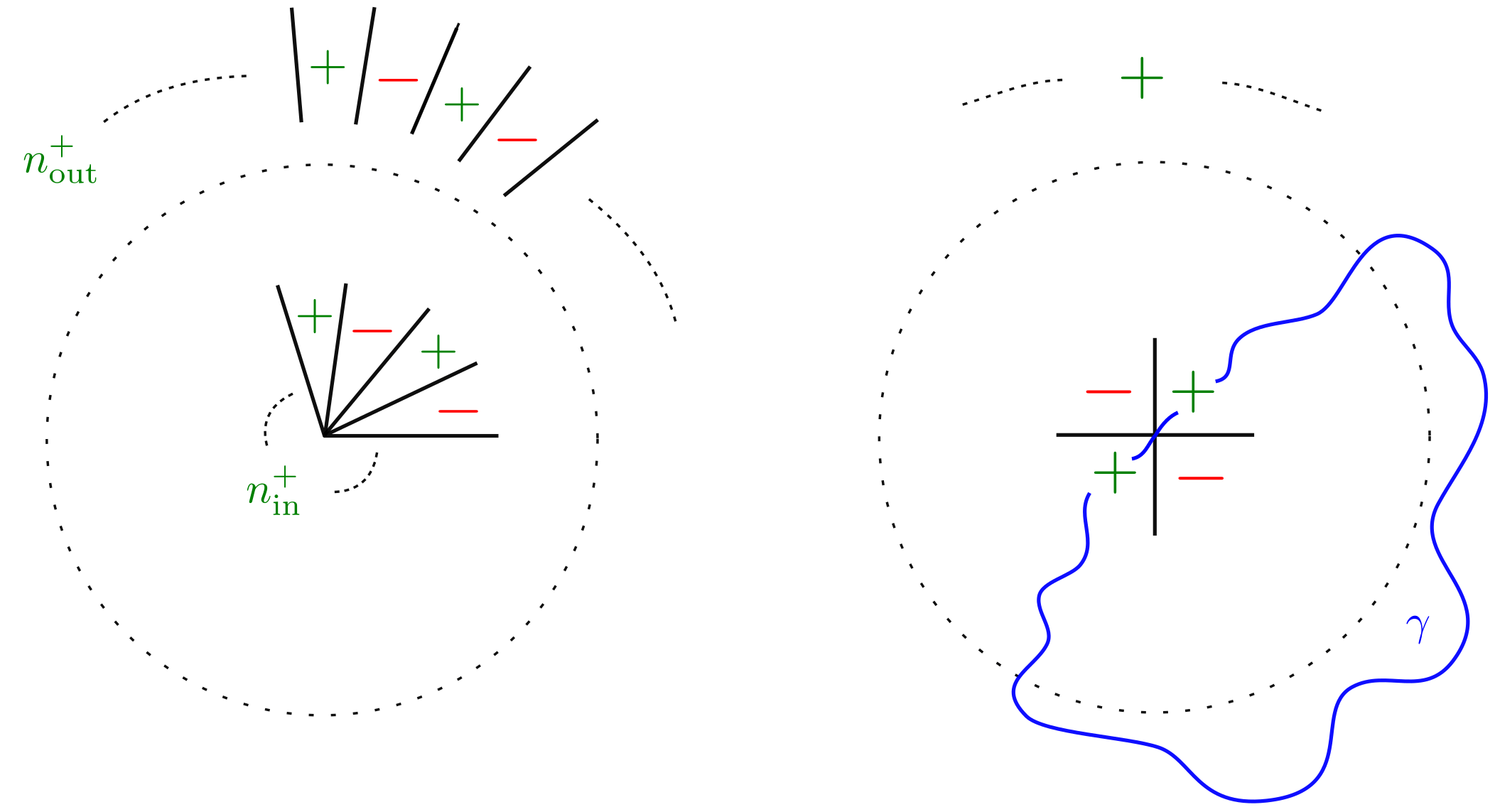}\end{center}
\caption{Parameters $\nin$ and $\nout$ count the number of positive chambers of $u$ near the origin and near infinity, respectively. On the right, we illustrate the base case $\nin = 2, \nout  =1$ (with zero negative chambers at infinity).}\label{fig:ind_b_2}
\end{figure}

\begin{proof} We adopt the phrase \emph{chamber at the origin} for chambers relative to $B_\epsilon(0)$ and the phrase \emph{chamber at infinity} for chambers relative to $\R^2\setminus B_M(0)$. The proof is by induction on $\nin + \nout$. For any fixed value of $\nin+\nout$, it suffices to establish either the case $\nin\geq\nout$ or the case $\nout\geq \nin$, as each case follows from the other case and inversion. By the alternating hypothesis, if $\nin\geq 2$, then $u$ also has $\nin$ negative chambers at the origin; if $\nout\geq 2$, then $u$ also has $\nout$ negative chambers at infinity.

For the base case, suppose that $\nin+\nout=3$, say $\nin=2$ and $\nout=1$. There are two alternatives. \emph{First alternative.} If the two positive chambers of $u$ at the origin belong to different nodal domains, then $u$ has at least three nodal domains, since $\{u>0\}$ has at least two connected components and $\{u<0\}$ is nonempty. \emph{Second alternative.} Suppose that the two positive chambers of $u$ at the origin belong to the same nodal domain. Then we can find a simple, closed curve $\gamma \subset \{ u >0\}\cup\{(0,0)\}$ that connects two points in distinct positive chambers of $B_\epsilon(0)$; see Figure \ref{fig:ind_b_2}. By the Jordan curve theorem, $\gamma$ disconnects the two negative chambers at the origin. Hence $u$ has at least three nodal domains, since $\{ u < 0\}$ has at least two connected components and $\{u>0\}$ is nonempty. The base case holds.

For the induction step, suppose that there exists $m\in\N$ such that the lemma holds whenever $3 \le \nin + \nout \le m$. Assume that $\nin+\nout=m+1$ and $\nin\geq \nout$. We remark that this implies $\nin\geq \lceil \frac{1}{2}(m+1)\rceil\geq 2$. We must prove that $u$ has at least $m+1$ nodal domains. There are two cases, one easy, one harder.

\emph{Easy case.} Suppose that no two chambers at the origin belong to the same nodal domain of $u$. Then $\mathcal{N}(u)\geq 2\nin\geq 2\lceil \frac{1}{2}(m+1)\rceil\geq m+1$ (since $\nin\geq 2$).

%Suppose that none of the positive chambers at infinity are in the same connected component of $ \{ u > 0\}$, and similarly none of the positive chambers of $u$ at the origin are in the same connected component of $\{u > 0\}$. Upon considering $-u$, we also assume that none of the negative chambers of $u$ at infinity and none of the negative chambers of $u$ at the origin are in the same connected component of $\{u < 0\}$. In this case, we clearly have that the number of nodal domains of $u$ is at least $2 \max\{ \nin, \nout \} \ge \nin + \nout$. This proves the induction step in this case.

\emph{Harder case.} Suppose that two distinct chambers $C$ and $D$ at the origin belong to the same nodal domain. Among all candidates, we choose $C$ and $D$ that have \begin{equation}\label{chamber-minimality} \text{the least number of (positive and negative) chambers between them}.\end{equation} Replacing $u$ by $-u$, if needed, we may assume without loss of generality that $C$ and $D$ are positive chambers. By the alternating condition, the minimum number of chambers strictly between $C$ and $D$ is at least one and is odd. Suppose there are $2k+1$ such chambers and enumerate them and $C$ and $D$ in order:  \begin{equation}\label{chamber-list} C,\ N_1,\ P_1, \dots,\ P_k,\ N_{k+1},\ D,\end{equation} where $N_1,\dots, N_{k+1}$ are negative and $P_{1},\dots, P_k$ are positive. (When $k=0$, the enumeration is $C, N_1, D$.) Pause and note that the $2k+2$ chambers $C$, $N_1$, $P_1$, \dots, $P_k$, $N_{k+1}$ lie in $2k+2$ disjoint nodal domains of $u$, otherwise we would violate \eqref{chamber-minimality}. Also note that $k\leq \frac{1}{2}\nin-1$, when $\nin$ is even, and $k\leq \frac{1}{2}\nin-\frac{3}{2}$ when $\nin$ is odd. (To see this, draw some examples for small values of $\nin$.) Either way, $\nin\geq 2k+2$.

To proceed, let $\sigma \subset \{u >0\}\cup\{(0,0)\}$ be a simple closed curve that connects a point in $C$ to a point in $D$, passes through the origin in $\overline{C}$ and $\overline{D}$, and encloses the intermediate chambers $N_1,P_1,\cdots,P_k,N_{k+1}$ in the bounded component $\Omega$ of $\R^2\setminus\sigma$. Next, let $\tilde u:\R^2\rightarrow\R$ be any continuous function such that $\tilde u|_{\R^2\setminus\Omega}=u$ and $\tilde u>0$ throughout $\Omega$. Effectively, this collapses the chambers in \eqref{chamber-list} into a single positive chamber. Since there are $k+2$ positive chambers in \eqref{chamber-list}, it follows that $\nin(\tilde u)=\nin(u)-(k+1)$ and $\nout(\tilde u)=\nout(u)$. Suppose to get a contradiction that $\nin(\tilde u)+\nout(\tilde u)\leq 2$. Then $$k+2=(2k+2)-(k+1)+1\leq \nin(u)-(k+1)+\nout(u)
=\nin(\tilde u)+\nout(\tilde u)\leq 2.$$ Hence $k=0$ and $4\leq m+1= \nin(u)+\nout(u)=\nin(\tilde u)+\nout(\tilde u)+1\leq 3$, which is absurd. Therefore, $3\leq \nin(\tilde u)+\nout(\tilde u)\leq \nin(u)+\nout(u)-1\leq m$.

By the induction hypothesis, $\nodal(\tilde u)\geq \nin(\tilde u)+\nout(\tilde u)=\nin(u)+\nout(u)-(k+1)$. Now, the nodal domains of $\tilde u$ are in one-to-one correspondence with the nodal domains of $u$ that intersect $\R^n\setminus\Omega$. Together with the additional $2k+1$ nodal domains of $u$ inside $\Omega$, corresponding to the chambers $N_1$, $P_1$, \dots, $P_k$, $N_{k+1}$, we conclude that in total  $\nodal(u)\geq \nin(u)+\nout(u)+k\geq \nin(u)+\nout(u)=m+1$. This completes the induction step.
\end{proof}

\begin{lemma}\label{l:alternate} If $u=u_d+f$ near the origin in $\R^2$, where $u_d$ is an hhp in $\R^2$ of degree $d\geq 1$, $f\in C^1$, and in polar coordinates, $|f(r,\theta)| + \abs{\partial_\theta f(r,\theta)} =o(r^d)$ as $r \ra 0$, then $u$ has $2d$ chambers with alternating signs in all sufficiently small neighborhoods of the origin and $\nin(u)=d$.\end{lemma}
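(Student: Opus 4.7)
The plan is to reduce the claim to a one-dimensional problem on the circle by normalizing by $r^d$ and then applying a quantitative $C^1$ perturbation argument. The starting point is the explicit classical form of hhps in $\R^2$: every nonzero hhp of degree $d$ is the real part of $c(x+iy)^d$ for some $c\in\C$, so in polar coordinates
\begin{equation*}
u_d(r\cos\theta, r\sin\theta) = r^d\phi(\theta),\qquad \phi(\theta) = C\cos(d\theta - \theta_0),
\end{equation*}
with $C > 0$ and some phase $\theta_0$. The trigonometric polynomial $\phi$ has exactly $2d$ zeros $\theta_1<\dots<\theta_{2d}$ on $[0, 2\pi)$, equally spaced by $\pi/d$, all simple since $|\phi'(\theta_k)| = dC > 0$, and $\phi$ alternates strict signs on consecutive arcs.

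Next I would introduce the rescaled function $h(r,\theta) := r^{-d}u(r\cos\theta, r\sin\theta) = \phi(\theta) + r^{-d}f(r,\theta)$ for $r > 0$. The decay hypothesis on $f$ gives
\begin{equation*}
\sup_{\theta\in[0,2\pi]}|h(r,\theta) - \phi(\theta)| + \sup_{\theta\in[0,2\pi]}|\partial_\theta h(r,\theta) - \phi'(\theta)| \longrightarrow 0 \text{ as } r \to 0^+,
\end{equation*}
so $h(r,\cdot)\to\phi$ in $C^1$ on the circle. I would then proceed in two steps. \emph{(a) Localization near each $\theta_k$:} fix a small arc $I_k\ni\theta_k$ on which $|\phi'|\geq\tfrac{1}{2}dC$ and $\phi$ changes sign through zero; for $r$ small enough, by the $C^1$ convergence, $\partial_\theta h(r,\cdot)$ has the same sign as $\phi'$ throughout $I_k$, and $h(r,\cdot)$ takes the same (nonzero) sign as $\phi$ at the two endpoints of $I_k$, so $h(r,\cdot)$ is strictly monotone on $I_k$ with a unique zero $\tilde\theta_k(r)\in I_k$. \emph{(b) Away from zeros:} on the compact set $K := [0,2\pi]\setminus\bigcup_k I_k$, $|\phi|$ is bounded below by some $\eta > 0$, so uniform convergence of $h(r,\cdot)$ to $\phi$ forces $h(r,\cdot)\neq 0$ on $K$ for all sufficiently small $r$. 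Combining (a) and (b), there exists $\epsilon > 0$ such that for every $r\in(0,\epsilon)$ the function $\theta\mapsto u(r\cos\theta, r\sin\theta)$ has exactly $2d$ simple zeros, at angles $\tilde\theta_1(r) < \cdots < \tilde\theta_{2d}(r)$, with signs on the $2d$ intervening arcs alternating and matching the sign pattern of $\phi$.

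To finish, I would apply the implicit function theorem to $h(r,\theta) = 0$ on $(0,\epsilon)\times I_k$ (using that $\partial_\theta h(r,\tilde\theta_k(r))$ has the sign and lower bound inherited from $\phi'(\theta_k)$), which yields that each $r\mapsto\tilde\theta_k(r)$ is continuous on $(0,\epsilon)$; together with the localization in (a), $\tilde\theta_k(r)\to\theta_k$ as $r\to 0^+$. Hence the $2d$ curves $\gamma_k(r) := (r\cos\tilde\theta_k(r), r\sin\tilde\theta_k(r))$ are continuous arcs emanating from the origin, and by step (b) they exhaust the nodal set of $u$ in $B_\epsilon(0)\setminus\{0\}$. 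These arcs partition $B_\epsilon(0)$ into $2d$ topologically sectorial chambers on which $u$ has constant sign; these signs alternate and exactly $d$ of them are positive, giving $\nin(u) = d$.

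The main technical obstacle is step (a), where one must jointly use the $C^0$ decay $|f| = o(r^d)$ (to pin down the sign of $h(r,\cdot)$ at the endpoints of $I_k$) and the angular $C^1$ decay $|\partial_\theta f| = o(r^d)$ (to force strict monotonicity of $h(r,\cdot)$ on $I_k$, which is what rules out the creation of extra nodal oscillations near the origin). The hypothesis $|\partial_\theta f|=o(r^d)$ is precisely tailored to this step; without the angular derivative control, $h(r,\cdot)$ could wiggle across zero multiple times inside $I_k$, spoiling the count of $2d$ chambers.
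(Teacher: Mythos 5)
Your proof is correct and takes essentially the same approach as the paper: both normalize by $r^d$ to reduce to a $C^1$ perturbation problem on circles, use the explicit sinusoidal form of $u_d$ in polar coordinates, localize near the $2d$ simple zeros of the degree-$d$ trigonometric polynomial (where the angular derivative is bounded away from zero) to count exactly one sign-changing zero of $u(r,\cdot)$ per arc, and then conclude continuity in $r$ and convergence of the zero angles to the ideal sector boundaries. Your write-up is somewhat more explicit than the paper's in step (a) and in invoking the implicit function theorem to get continuity of $\tilde\theta_k(r)$, but these are presentational rather than substantive differences.
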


\begin{proof}
 Up to a rotation and renormalization, $u_d$ is given in polar coordinates by $u_d(r,\theta) = r^d \sin (d\theta)$. Thus the nodal domains of $u_d$ consist of $2d$ sectors at the origin with opening $\pi/d$ with alternating signs. If we consider the function $g_r(\theta): \sphere^1 \ra \R$ defined by $g_r(\theta) = r^{-d} u(r,\theta)$, then zero set of $g_r$ is the same as that of $u$ intersected with $\partial B_r(0)$. Moreover, the assumptions on $f$ near the origin give us that
\begin{equation}\label{eqn:gr}
\begin{split}
g_r(\theta) = \sin(d\theta) + e_1(r,\theta), \\
\partial_\theta g_r(\theta) = d \cos(d\theta) + e_2(r,\theta),
\end{split}
\end{equation}
where $\abs{e_i (r,\theta)} \ra 0$ as $r \ra 0$ for $i=1,2$. Since $\abs{\cos(d\theta)} =1$ on whenever $\sin(d\theta)=0$, it is straight-forward to check from \eqref{eqn:gr} that the following holds. For each $\epsilon >0$, there is some $r_0 >0$ small so that for all $0 < r < r_0$, $g_r(\theta)$ has exactly $2d$ zeros in $\sphere^1$ (one in each of the sectors $\{\abs{\theta - j\pi/d} < \epsilon\}$ for $0 \le j \le 2d-1$), at which $g_r$ changes sign. If we order such zeros $\theta_0(r),  \theta_1(r) , \dotsc, \theta_{2d-1}(r)$, then the $\theta_j(r)$ are continuous in $r$ since $u$ is a continuous function, and $\theta_j(r) \ra j\pi/d$ as $r \da 0$. Recalling that the nodal set of $g_r$ coincides with $\nodal(u) \cap \partial B_r(0)$, one obtains the desired conclusion.
\end{proof}

As an application of Lemmas \ref{lem:cont_comp} and \ref{l:alternate}, we obtain a lower bound on the number of nodal domains of an hcp in $\R^{2+1}$ whose harmonic coefficient has degree at least 2. {Recall that the harmonic coefficient of an hcp $p$ is the leading spatial coefficient $p_m(x)$ when $p(x,t)$ is written as in \eqref{eqn:std_form}. Thus, the following proposition allows us to reduce to lower degree harmonic coefficients in the proof of $m_{2,d} = 3$ when $d \equiv 0 \pmod{4}$, which will be taken care of next in Proposition \ref{prop:low_bd_2}.}

\begin{prop}\label{prop:low_bd}
If $p(x,y,t)$ is an hcp in $\R^{2+1}$ of the form \eqref{eqn:std_form} and $\deg p_m \ge 2$, then $p$ has at least {$ 2 \deg p_m \ge 4$} nodal domains.
\end{prop}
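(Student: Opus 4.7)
The plan is to transfer the problem from $\sphere^2$ (via Remark \ref{r:sphere}) to $\R^2$ by stereographic projection and then apply Lemma \ref{lem:cont_comp} with $\nin = \nout = \deg p_m$. Set $d' := \deg p_m = d-2m \geq 2$. Every coefficient $p_k$ in \eqref{eqn:std_form} has degree $d-2k \geq d' \geq 2$, so each $p_k$ vanishes at the origin; in particular $p(0,0,1) = \sum_{k=0}^m p_k(0,0) = 0$. Thus the north pole $N=(0,0,1)$ lies in the nodal set of $p$, and every nodal domain of $p$ is contained in $\sphere^2\setminus\{N\}$. Let $\phi:\R^2\to\sphere^2\setminus\{N\}$ be the inverse stereographic projection from $N$,
\[
\phi(\xi,\eta) = \left(\tfrac{2\xi}{1+\xi^2+\eta^2},\tfrac{2\eta}{1+\xi^2+\eta^2},\tfrac{\xi^2+\eta^2-1}{\xi^2+\eta^2+1}\right),
\]
and set $u := p\circ\phi:\R^2 \to \R$. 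Since $\phi$ is a homeomorphism, $\nodal(p) = \nodal(u)$, reducing the problem to showing $\nodal(u) \geq 2d'$ via Lemma \ref{lem:cont_comp}.

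Write $R := |(\xi,\eta)|$. Near the origin, $\phi(\xi,\eta) = (2\xi, 2\eta, -1) + O(R^2)$; substituting into \eqref{eqn:std_form} and using homogeneity of each $p_k$ so that the $k=m$ term dominates (it has the smallest $(\xi,\eta)$-degree $d'$, while every other term has degree at least $d'+2$), I obtain
\[
u(\xi,\eta) = (-1)^m\,2^{d'}\,p_m(\xi,\eta) + O(R^{d'+2})
\]
together with the matching $C^1$ bound. Because $p_m$ is an hhp of degree $d' \geq 2$, Lemma \ref{l:alternate} produces $2d'$ alternating chambers in every sufficiently small ball around the origin and gives $\nin(u) = d'$.

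Near infinity, $\phi(\xi,\eta) \to N$ with $(x,y)$-components of order $R^{-1}$ and $t$-component equal to $1 + O(R^{-2})$. A parallel computation in polar coordinates $(\xi,\eta) = (R\cos\theta, R\sin\theta)$, again exploiting homogeneity so that the $k=m$ term dominates, yields
\[
R^{d'}\,u(R\cos\theta, R\sin\theta) = 2^{d'}\,p_m(\cos\theta,\sin\theta) + O(R^{-2})
\]
with matching $C^1$ asymptotics in $\theta$. Up to a rotation, $p_m(\cos\theta,\sin\theta)=c\sin(d'\theta)$ has $2d'$ simple zeros in $[0,2\pi)$ with nonvanishing derivative at each, so repeating the implicit-function argument from the proof of Lemma \ref{l:alternate} shows that for all $R$ sufficiently large, $\theta \mapsto u(R\cos\theta, R\sin\theta)$ has exactly $2d'$ sign-changing zeros depending continuously on $R$. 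These zeros trace out $2d'$ curves extending to infinity and partition $\{R>M\}$ into $2d'$ alternating sectors, so $\nout(u) = d'$. Lemma \ref{lem:cont_comp} then gives $\nodal(u) \geq 2d'$.

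The key observation driving the argument is that near both poles of $\sphere^2$, the lower-degree-in-$(x,y)$ coefficients $p_k$ ($k<m$) are negligible compared to $p_m$, since they vanish to higher order at the origin in $(x,y)$; this symmetry between the two poles is what makes Lemma \ref{lem:cont_comp} applicable. The one mildly technical step is the near-infinity analysis, since Lemma \ref{l:alternate} is formulated only at the origin; however, once the $C^1$ asymptotic displayed above is verified, its implicit-function-theorem argument carries over verbatim to yield the desired sector structure at infinity.
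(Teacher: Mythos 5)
Your proposal is correct and follows essentially the same route as the paper's proof: reduce to $\R^2$ via stereographic projection from $N$, observe that near the origin (the image of $S$) the leading behavior of $u=p\circ\phi$ is a nonzero constant times the harmonic coefficient $p_m$ with a $o(R^{d'})$ error in $C^1$ (since all the other $p_k$ are homogeneous of degree $\ge d'+2$), apply Lemma~\ref{l:alternate} to get $\nin(u)=d'$, then do the corresponding count near infinity, and invoke Lemma~\ref{lem:cont_comp}. The one point of divergence is how you establish $\nout(u)=d'$: you carry out a direct asymptotic expansion of $u$ in polar coordinates as $R\to\infty$ and re-run the implicit-function argument there, whereas the paper sidesteps the need for a separate ``at infinity'' version of Lemma~\ref{l:alternate} by considering $v=p\circ\pi_S^{-1}$, observing that the same near-origin analysis gives $\nin(v)=d'$, and noting that $\nout(u)=\nin(v)$ because $u$ and $v$ are related by inversion. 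Your direct computation is valid and a bit more self-contained; the paper's inversion trick is slightly slicker in that it literally reuses the near-origin computation instead of reproving it with $R^{-1}$ in place of $R$.
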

\begin{proof} Put $d=\deg p_m$. By Remark \ref{r:sphere}, it suffices to prove that the nodal set of $p|_{\sphere^2}$ has at least $2d$ nodal domains. Since $d\geq 1$, $p(N)=p(S)=0$, where $N=(0,0,1)$ and $S=(0,0,-1)$ are the north and south poles of $\sphere^2$, respectively. Let $\pi_N$ denote the stereographic projection from the north pole of $\sphere^2$ onto $\widehat{\R}^2=\R^2\cup\{\infty\}$. Then $\pi_N(S)=0$ and $\pi_N(N)=\infty$. Thus, if we can show that the nodal domains of $u:=p\circ \pi_N^{-1}$ near the origin and near infinity are homeomorphic to $2d$ sectors with alternating signs, then $\nin(u)=\nout(u)=d$ and $\nodal(p)=\nodal(p|_{\sphere^2})=\nodal(u)\geq \nin(u)+\nout(u)=2d\geq 4$ by Lemma \ref{lem:cont_comp}.

The stereographic projection $\pi_N: \sphere^2 \setminus\{N\} \ra \R^2$ is given by
\begin{align*}
\pi_N(x,y,t) & = \left( \dfrac{x}{1-t}, \dfrac{y}{1-t}  \right), \quad \pi_N^{-1}(X,Y)  = \left( \dfrac{2X}{R^2+1}, \dfrac{2Y}{R^2 +1}, \dfrac{R^2-1}{R^2+1} \right),
\end{align*} where $R=R(X,Y)=(X^2+Y^2)^{1/2}$. Recalling \eqref{eqn:std_form} {for the definition of $p_j$}, we have
\begin{equation}\begin{split}
u(X,Y)  &= \sum_{j=0}^m \left( \dfrac{R^2-1}{R^2+1}  \right)^{j} p_j\left( \dfrac{2X}{R^2+1}, \dfrac{2Y}{R^2+1} \right)\\
&= \sum_{j=0}^m \left( \dfrac{R^2-1}{R^2+1}  \right)^{j} \left(\dfrac{2}{R^2+1}\right)^{d+2(m-j)} p_j(X,Y) = \sum_{j=0}^m a_j(R)p_j(X,Y),
\end{split}\end{equation} 
{where we assign}
\begin{align*}
	{a_j(R) \coloneqq \left( \dfrac{R^2-1}{R^2+1}  \right)^{j} \left(\dfrac{2}{R^2+1}\right)^{d+2(m-j)}.}
\end{align*}
Notice that each term $p_{j}$ is algebraically homogeneous of order $d + 2(m-j)$, the lowest order term $p_m$ is an hhp of degree $d$ (see e.g.~Figure \ref{fig:high_dim_paths} for the case $d=4$), and the coefficients $a_j(R)$ are radial, bounded real-analytic functions of $X$ and $Y$ with $\lim_{R\rightarrow 0} a_j(R) = (-1)^j2^{d+2(m-j)}$. {Note that} $a_m$ does not vanish in $\{0<R\leq 1/2\}$. It follows that $f:=u/a_m-p_m$ is real-analytic near $0$ and in polar coordinates, $|f(R, \theta)| + \abs{\partial_\theta f(R, \theta)} = o(R^d)$ as $R \ra 0^+$, since each of the terms $p_0$, \dots, $p_{m-1}$ is homogeneous with degree at least $d+2$. By Lemma \ref{l:alternate}, we conclude that $u/a_m=p_m+f$ has $2d$ chambers at the origin with alternating signs. Thus, since $a_m$ does not vanish near the origin, $u=a_m(u/a_m)$ also has $2d$ chambers at the origin with alternating signs and $\nin(u)=\nin(u/a_m)=d$.

Finally, let $v:=p\circ \pi_S^{-1}$, where $\pi_S$ denotes the stereographic projection from the south pole of $\sphere^2$ onto $\widehat{\R}^2$. Repeating the argument for $u$ shows that $\nin(v)=d$. Therefore, because $u$ and $v$ are related by inversion, we have $\nout(u)=\nin(v)=d$.\end{proof}

Next, we present a lower bound on the number of nodal domains for a certain class of hcps of degree $4k$, {which is the last class we need to consider to compute $m_{2,d}$ for $d \equiv 0 \pmod{4}$}.%parabolic degree congruent to $0 \pmod 4$ that is not covered by Proposition \ref{prop:low_bd}
\begin{prop} \label{prop:low_bd_2}
If $p(x,y,t)$ is an an hcp in $\R^{2+1}$ of the form
\begin{align}
p(x,y,t) & = t^{2k} + t^{2k-1} p_{2k-1}(x,y) + \cdots + p_0(x,y) \label{eqn:p_rep}
\end{align}
for some $k\geq 1$, then $p$ has at least $3$ nodal domains.
\end{prop}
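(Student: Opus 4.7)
The plan is to argue by contradiction: suppose $\nodal(p)\leq 2$, which by Remark~\ref{r:meanvalue} forces $\nodal(p)=2$ exactly, so both $\{p>0\}$ and $\{p<0\}$ are connected. Evaluating $p$ on the $t$-axis gives $p(0,0,\pm 1)=(\pm 1)^{2k}=1>0$, so the poles $N=(0,0,1)$ and $S=(0,0,-1)$ of $\sphere^{2}$ lie in the \emph{same} connected positive component. By Remark~\ref{r:sphere} it suffices to derive a contradiction from this configuration of $p|_{\sphere^2}$.

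First I would perform a local expansion of $p|_{\sphere^2}$ at each pole. The coefficient relation \eqref{cond:soln} with $m=2k$ and $p_{2k}=1$ forces $\Delta p_{2k-1}=2k$, permitting the decomposition $p_{2k-1}(X,Y)=\tfrac{k}{2}(X^{2}+Y^{2})+h(X,Y)$ where $h(X,Y)=\alpha(X^{2}-Y^{2})+2\beta XY$ is an hhp of degree $2$. Expanding $v:=p\circ\pi_{S}^{-1}$ for small $R=\sqrt{X^{2}+Y^{2}}$ in the stereographic chart centered at $N$ yields, after collecting leading terms,
\begin{equation*}
v(X,Y)=1-2kR^{2}+4h(X,Y)+O(R^{4}),
\end{equation*}
and an analogous expansion of $u:=p\circ\pi_{N}^{-1}$ near $S$ gives $u=1-6kR^{2}-4h+O(R^{4})$. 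Consequently both poles are critical points of $p|_{\sphere^{2}}$ with value $1$; the Hessians have eigenvalues $-2k\pm 4\sqrt{\alpha^{2}+\beta^{2}}$ at $N$ and $-6k\pm 4\sqrt{\alpha^{2}+\beta^{2}}$ at $S$.

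The next step is a case analysis driven by $\mu:=\sqrt{\alpha^{2}+\beta^{2}}$. When $\mu<k/2$, both $N$ and $S$ are strict local maxima, and an implicit-function argument applied to $v=0$ using the expansion above produces a closed loop $\Gamma_{N}$ on $\sphere^{2}$ encircling $N$ and, by the same analysis with $u$, a closed loop $\Gamma_{S}$ encircling $S$. Either the two loops are disjoint, immediately giving $\{p=0\}\cap\sphere^{2}$ at least two components and hence $\nodal(p)\geq 3$, or else they fuse into one simple closed curve. In the latter subcase one uses the caloric structure (in particular the parabolic scale-invariance of $\{p=0\}$, so each component of the nodal set is a parabolic cone from the origin) to show a single such curve cannot bound both a positive region containing the two polar caps and the required negative region, contradicting $\nodal(p)=2$. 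When $\mu\geq k/2$, at least one pole becomes a saddle of $p|_{\sphere^{2}}$; the nodal set near the saddle has hyperbola-type branches governed by the angular zeros of $A(\theta)=2k-4\alpha\cos2\theta-4\beta\sin2\theta$, and one counts the $\{p>0\}$-chambers accessible from each pole using a sphere-based variant of Lemma~\ref{lem:cont_comp}.

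The hardest part will be the saddle subcase of the case analysis, especially when both poles are saddles: the asymptotic expansions control only small neighborhoods of $N$ and $S$, while ruling out $\nodal(p)=2$ requires knowing how the hyperbola-type arcs join up globally across the cylinder $\sphere^{2}\setminus\{N,S\}$. I expect this is handled by leveraging the parabolic scale-invariance of the nodal set together with a careful continuation argument for nodal arcs, conceptually in the spirit of Proposition~\ref{prop:low_bd} but executed more delicately because the harmonic coefficient is only the constant $1$, so the leading order information at the poles has to be squeezed from $p_{2k-1}$ rather than $p_{2k}$.
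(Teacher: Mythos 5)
Your approach is genuinely different from the paper's, but it has a gap that makes it incomplete, and in fact the key structural observation the paper exploits is missed entirely.

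The paper's proof is much shorter. Every coefficient $p_i(x,y)$ in \eqref{eqn:p_rep} has degree $4k-2i$, which is \emph{even}, so $p(-x,-y,t)=p(x,y,t)$; i.e., the reflection $R(x,y,t)=(-x,-y,t)$ is a symmetry of $p$. If $\{p|_{\sphere^2}>0\}$ were connected, one joins $N$ to $S$ by a simple path $\gamma\subset\{p|_{\sphere^2}>0\}$, reflects to get $R\circ\gamma\subset\{p|_{\sphere^2}>0\}$, and the union $\tilde\gamma$ is a Jordan curve in $\sphere^2$. For any $Q$ with $p(Q)<0$, one has $p(R(Q))=p(Q)<0$, yet $Q$ and $R(Q)$ lie on opposite sides of $\tilde\gamma$, so $\{p|_{\sphere^2}<0\}$ is disconnected. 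Either way $\nodal(p)\geq 3$. Your proposal does not notice this $\Z/2$ symmetry, which is exactly what makes the result clean.

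Beyond missing the short route, your Morse-theoretic plan has a concrete problem even in its ``easy'' case $\mu<k/2$. The local expansion $v=1-2kR^2+4h+O(R^4)$ is valid only for small $R$, where $v$ is close to $1$ and hence bounded away from zero; the nodal set $\{v=0\}$ lies at $R$ of order one, outside the regime controlled by the Taylor expansion. Consequently the implicit function theorem applied to that expansion does not ``produce a closed loop $\Gamma_N$'' in the nodal set, and the dichotomy ``disjoint loops'' versus ``loops fuse'' is not set up. The same objection applies to the saddle case: the Hessian at the poles describes the critical point at level $1$, not the topology of the level-$0$ set, so the ``hyperbola-type branches governed by the angular zeros of $A(\theta)$'' don't naturally describe anything about $\{p|_{\sphere^2}=0\}$. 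You acknowledge the hardest subcase is unfinished, and the appeal to ``parabolic scale-invariance'' plus a continuation argument is too vague to count as a proof there. The local expansions you computed (including the decomposition $p_{2k-1}=\tfrac{k}{2}(X^2+Y^2)+h$ and the leading coefficients at each pole) are correct, and they are exactly the right ingredients for Proposition \ref{prop:low_bd} where the harmonic coefficient has degree $\geq 2$, but for the $t^{2k}$-leading case the information at the poles is too weak and the symmetry argument is the intended mechanism.
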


\begin{figure}
\begin{center}\includegraphics[width=.35\textwidth]{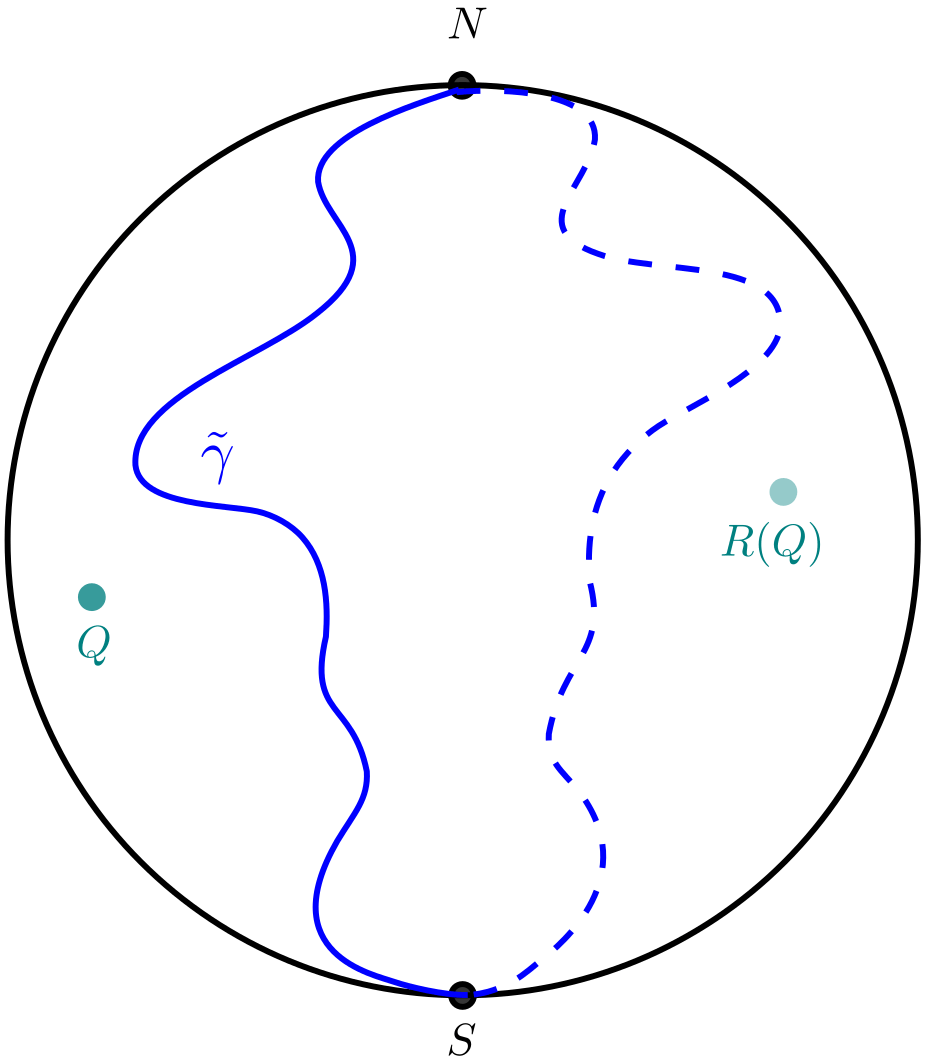}\end{center}%
\caption{Proof of Proposition \ref{prop:low_bd_2}. If the union $\tilde\gamma$ of the path $\gamma$ and its reflection $R\circ \gamma$ lie in the positivity set of $p|_{\sphere^2}$, then the negativity set of $p|_{\sphere^2}$ is\\ disconnected.}\label{fig:t_even_curve}
\end{figure}

\begin{proof} By Remark \ref{r:sphere}, it suffices to prove that the positivity set of $p|_{\sphere^2}$ is disconnected or the negativity set of $p|_{\sphere^2}$ is disconnected. Thus, if $\{p|_{\sphere^2}>0\}$ is disconnected, we are done.

Suppose that the positivity set of $p|_{\sphere^2}$ is connected. By \eqref{eqn:p_rep}, $p(0,0,1)=p(0,0,-1)=1$. Thus, the north pole $N=(0,0,1)$ and south pole $S=(0,0,-1)$ belong to the positivity set of $p|_{\sphere^2}$. Hence there exists a simple path $\gamma :[0,1] \ra \{p|_{\sphere^2} > 0\}$ such that $\gamma(0) = N$ and $\gamma(1) = S$. Now, when $t$ is fixed, $p(x,y,t)$ is an even polynomial in $x$ and $y$, since each coefficient $p_i(x,y)$ in \eqref{eqn:p_rep} has even degree. In particular, writing $R(x,y,t) = (-x,-y,t)$, we have $R\circ \gamma$ is a mirrored path from $N$ to $S$ contained in $\{p|_{\sphere^2} > 0\}$. Let $\tilde\gamma$ be the union of the traces of $\gamma$ and its reflection $R\circ \gamma$. See Figure \ref{fig:t_even_curve}. By the Jordan curve theorem, $\tilde\gamma$ separates $\sphere^2\setminus\tilde\gamma$ into two connected components. Let $Q\in\sphere^2$ be any point such that $p(Q)<0$. Then $p(R(Q))=p(Q)<0$, but $Q$ and $R(Q)$ lie in different connected components of $\sphere^2\setminus\tilde\gamma$. Therefore, the negativity set of $p|_{\sphere^2}$ is disconnected. % This completes the proof, since then $\sphere^2 \setminus \{ p < 0\}$ has at least two connected components in this case (when $\{p > 0 \} \cap \sphere^2$ is connected), and in the other case, we are done trivially.
\end{proof}

Combining the previous two results, we see that $m_{2,d} \ge 3$ when $d \equiv 0 \pmod 4$.
\begin{cor}[A lower bound for $d \equiv 0 \pmod 4$] \label{cor:0_mod_4}
If $d \equiv 0 \pmod 4$ and $d\geq 4$, then any time-dependent hcp $p$ of degree $d$ in $\R^{2+1}$ has at least $3$ nodal domains. Hence $m_{2,d}\geq 3$.
\end{cor}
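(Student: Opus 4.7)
The plan is to split into cases based on the algebraic degree of the harmonic coefficient $p_m$ of $p$ when written in the standard form \eqref{eqn:std_form}, and then appeal to one of the two preceding propositions in each case.

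First I would record a basic parity observation. Since $p$ is time-dependent, $m \geq 1$ in \eqref{eqn:std_form}, and by Example \ref{eg:hcp} each coefficient $p_{m-j}$ is homogeneous of degree $d - 2(m-j)$; in particular $\deg p_m = d - 2m$. Because $d = 4k$ is even, $\deg p_m$ is even, so $\deg p_m \in \{0, 2, 4, \ldots\}$. This is the key structural fact: the $d \equiv 0 \pmod 4$ hypothesis rules out the odd values of $\deg p_m$ that would leave room between Propositions \ref{prop:low_bd} and \ref{prop:low_bd_2}.

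If $\deg p_m \geq 2$, Proposition \ref{prop:low_bd} applies directly and gives $\nodal(p) \geq 2 \deg p_m \geq 4 \geq 3$. If instead $\deg p_m = 0$, then $m = d/2 = 2k$ and $p_m$ is a nonzero constant $c$. Since scaling by a nonzero constant does not alter the nodal structure, I would replace $p$ by $p/c$, which puts $p$ in exactly the form \eqref{eqn:p_rep} required by Proposition \ref{prop:low_bd_2}, yielding $\nodal(p) \geq 3$. Taking the two cases together establishes $m_{2,d} \geq 3$.

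There is essentially no obstacle at this stage, since the technical work has already been done in the two preceding propositions; the only delicate point is the parity check that shows the dichotomy ``$\deg p_m = 0$ or $\deg p_m \geq 2$'' is exhaustive when $d \equiv 0 \pmod 4$.
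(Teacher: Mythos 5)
Your proof is correct and takes the same route as the paper: split on whether $\deg p_m = 0$ or $\deg p_m \geq 2$ and invoke Proposition \ref{prop:low_bd_2} or Proposition \ref{prop:low_bd} respectively. One small imprecision in your commentary: the parity of $\deg p_m = d - 2m$ is forced by $d$ being even alone, so ruling out odd $\deg p_m$ is not where the full hypothesis $d \equiv 0 \pmod 4$ is used. The $\pmod 4$ condition actually bites in the $\deg p_m = 0$ case, where $m = d/2$ must itself be even for $p$ to have the form $t^{2k}+\cdots$ of \eqref{eqn:p_rep}; this is what lets the proof of Proposition \ref{prop:low_bd_2} use that $p$ is positive at both poles, and it is exactly this that fails when $d \equiv 2 \pmod 4$. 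You do correctly write $m = d/2 = 2k$ in that case, so the proof itself has no gap---only the surrounding explanation is off.
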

\begin{proof}
Since $p$ has degree $d=4k\geq 4$ ({which is even}), its leading $t$ term is either $t^{2k}$ or of the form $t^m p_m(x,y)$ with $\deg p_m \ge 2$. In the first case, $\mathcal{N}(p)\geq 3$ by Proposition \ref{prop:low_bd_2}. In the second  case, $\mathcal{N}(p)\geq 2\deg p_m\geq 4$ by Proposition \ref{prop:low_bd}.
\end{proof}

\section{HCP in \texorpdfstring{$\R^{2+1}$}{2+1 dimensions}, Part II: constructions} \label{sec:dim_3_constr}

In this section, we construct examples of time-dependent hcps in $\R^{2+1}$ of degree $d\geq 2$ with two nodal domains when $d \not \equiv 0 \pmod 4$ and with three nodal domains when $d \equiv 0 \pmod 4$. By Remark \ref{r:sphere}, counting the nodal domains of an hcp $p$ is equivalent to counting the nodal domains of $p|_{\sphere^2}$. With this reduction, the general strategy in constructing examples is the same in all cases (and is the strategy introduced by \cite{Lewy77} and used by \cite{EJN07}, \cite{LTY15}, and \cite{BH16} in related contexts):
\begin{enumerate}[(a)]
\item Begin with an hcp $\phi_1$ of degree $d$ whose nodal set can be described explicitly.
\item Find another hcp $\phi_2$ of degree $d$ so that the nodal set of the perturbation $u=\phi_1 - \epsilon \phi_2$ in $\sphere^2$ is either one Jordan curve ($u$ has two nodal domains) or the nodal set of $u$ is the union of two disjoint Jordan curves ($u$ has three nodal domains).
\end{enumerate}
The key difficulty in this strategy is finding certain compatibility conditions between $\phi_1, \phi_2$. In general, the nodal set $\{ \phi_1|_{\sphere^2} =0 \}$ is the union of a relatively open smooth set,  where $|\nabla \phi_1|\neq 0$, and isolated singular points, where $|\nabla \phi_1|=0$. Understanding the picture near smooth portion of the nodal set is straightforward; see e.g.~\cite[Lemma 2]{Lewy77}. However, understanding how the nodal domains of $\phi_1$ change under perturbation near a singular point is quite delicate, requiring knowledge both of the local structure of $\phi_1$ and of the sign of $\phi_2$. This makes finding $\phi_1$ and $\phi_2$ challenging. After reviewing the main perturbation lemmas, we present our examples in order of increasing difficulty: $d\equiv 2\pmod 4$ in \S\ref{subsec:2mod4}, $d$ odd in \S\ref{subsec:1mod4}, and $d\equiv 0\pmod 4$ in \S\ref{subsec:0mod4}.

\begin{figure}\begin{center}\includegraphics[width=.7\textwidth]{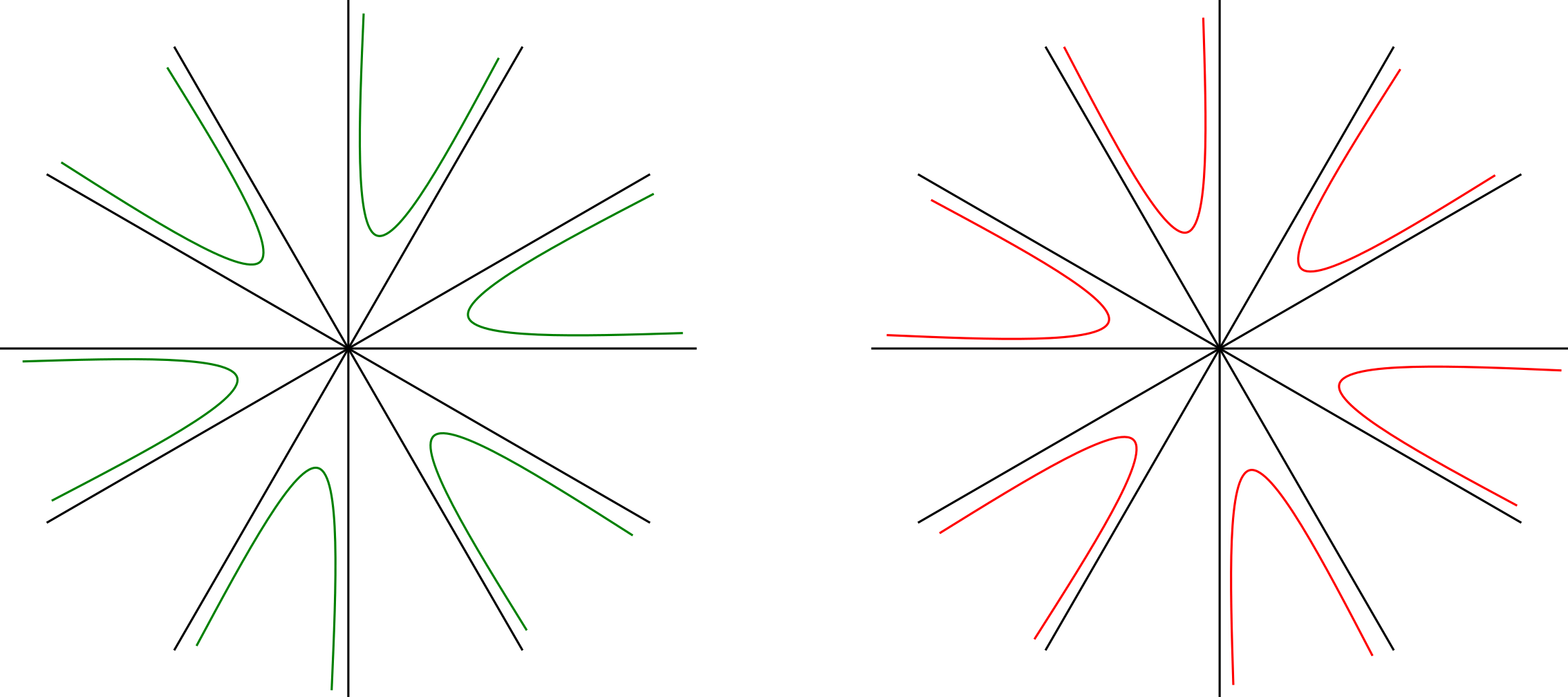}\end{center}\caption{Zero set of $\psi(x,y)=\Im((x+iy)^6)$ (in black) and its perturbation $\psi(x,y)-\epsilon f(x,y)$ near the origin when $f$ is $C^1$, $\epsilon$ is small, and $f(0,0)>0$ (left, in green) or $f(0,0)<0$ (right, in red). The green lines lie in the positivity set for $\psi$, whereas the red lines lie in the negativity set for $\psi$.}\label{fig:hhp6}\end{figure}

The first of two perturbation lemmas that we need, Lemma \ref{l:lewy}, is the consequence of \cite[Lemma 4]{Lewy77} recorded on the second paragraph on p.~1239 of Lewy's paper. In the original source, it is stated that $f$ should be real-analytic, but inspecting the proof shows that it suffices to assume $f$ is $C^1$. {Before stating our results, we remind the reader of the definition of the \textit{Hausdorff distance} between two non-empty sets $E, F \subset \R^n$
\begin{align}\label{eqn:hd_dist}
	\mathrm{HD}(E,F) \coloneqq \max \left \{ \sup_{x \in E} \dist(x, F) , \,   \sup_{x \in F} \dist(x, E) \right \}.
\end{align}} 

\begin{lemma}\label{l:lewy} Let $\psi(x,y)=\mathrm{Im}((x+iy)^d)$ for some $d\geq 2$. If $f:B_r(0)\rightarrow\R$ is $C^1$ and $f(0,0)>0$, then there exists $\tau\in(0,r)$ and $\epsilon_0>0$ such that for all $\epsilon\in(0,\epsilon_0)$, the nodal set of $\psi-\epsilon f$ in $B_\tau(0)$ consists of $d$ pairwise disjoint simple curves, with one curve inside each of the connected components of $\{\psi>0\}$, and $$\lim_{\epsilon\rightarrow 0} \mathrm{HD}\left(\{\psi- \epsilon f=0\}\cap B_\tau(0), \{\psi=0\}\cap B_\tau(0)\right)=0.$$ The same conclusion holds when $f(0,0)<0$ except that then the nodal set of the perturbation $\psi-\epsilon f$ lies in $\{\psi<0\}$. See Figure \ref{fig:hhp6}.
\end{lemma}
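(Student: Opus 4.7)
Proof plan. I work in polar coordinates $(r,\theta)$, writing $\psi = r^d \sin(d\theta)$ and $g_\epsilon := \psi - \epsilon f$, and treat the case $f(0,0) > 0$; the other case follows by replacing $(\psi, f)$ with $(-\psi, -f)$. Choose $\tau \in (0, r)$ small enough that $f \geq f(0,0)/2 =: m_0$, $|\nabla f| \leq L$, and $\tau L \leq d m_0$ on $\overline{B_\tau(0)}$. The positivity set of $\psi$ near the origin is the disjoint union of $d$ open sectors $S_j = \{\theta \in (2j\pi/d, (2j+1)\pi/d)\}$, $j = 0, \ldots, d-1$. On $\overline{\{\psi \leq 0\}} \cap B_\tau(0)$, $g_\epsilon \leq -\epsilon m_0 < 0$, so the nodal set of $g_\epsilon$ in $B_\tau(0)$ lies in $\bigsqcup_j S_j$. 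Moreover, on this nodal set $\psi = \epsilon f$ forces $r \geq (\epsilon m_0)^{1/d}$, so $|\nabla \psi| = d r^{d-1} \geq d (\epsilon m_0)^{(d-1)/d}$ dominates $|\nabla(\epsilon f)| \leq \epsilon L$ for $\epsilon$ small (since $(d-1)/d < 1$), and the implicit function theorem shows $\{g_\epsilon = 0\} \cap B_\tau(0)$ is a smooth 1-manifold.

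The main step is to show that this nodal set has exactly one connected component in each sector $S_j$. I would analyze the restriction of $g_\epsilon$ to a single ray: for $\theta_0 \in S_j$, set $h_{\theta_0}(r) := r^d \sin(d\theta_0) - \epsilon f(r\cos\theta_0, r\sin\theta_0)$, so $h_{\theta_0}(0) = -\epsilon f(0,0) < 0$. At any zero $r_*$ of $h_{\theta_0}$ we have $\sin(d\theta_0) r_*^d \geq \epsilon m_0$, hence
\[
h_{\theta_0}'(r_*) \geq d (\epsilon m_0)^{(d-1)/d} \sin(d\theta_0)^{1/d} - \epsilon L,
\]
which is strictly positive once $\sin(d\theta_0) \geq c_1 \epsilon$ for the constant $c_1 := L^d/(d^d m_0^{d-1})$. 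Every zero is then simple with positive derivative, so two zeros would force a sign change from $+$ to $-$ between them, contradicting positivity of $h_{\theta_0}'$ at any zero; hence at most one zero per ray. The intermediate value theorem gives existence precisely when $h_{\theta_0}(\tau) > 0$, while for $\theta_0$ near the sector boundary with $\sin(d\theta_0) < \epsilon m_0/\tau^d$ we have $h_{\theta_0}(r) \leq \tau^d \sin(d\theta_0) - \epsilon m_0 \leq 0$ on $(0, \tau]$, ruling out zeros. The smallness hypothesis $\tau L \leq d m_0$ gives $c_1 \leq m_0/\tau^d$, so these two regimes cover all of $S_j$. The implicit function theorem then parametrizes the nodal set in $S_j \cap B_\tau(0)$ as a smooth graph $r = r(\theta, \epsilon)$ over an open subinterval of $S_j$, yielding a single simple arc per sector.

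Finally, the Hausdorff bound $r(\theta, \epsilon) \leq (\epsilon \|f\|_{L^\infty(B_\tau)}/\sin(d\theta))^{1/d}$ shows that the nodal arcs converge uniformly to the rays $\{\psi = 0\}$ as $\epsilon \to 0$, yielding the stated Hausdorff convergence. The main technical obstacle is the ``transition'' strip $\{\sin(d\theta) \sim \epsilon\}$ near each sector boundary, where $|\nabla \psi|$ falls to the same scale as $|\nabla(\epsilon f)|$ and the clean derivative estimates degenerate; the natural perturbation scale is $\epsilon^{1/d}$ rather than $\epsilon$, which is what forces $\tau$ to be strictly less than $r$ in a quantitative way depending on $\|\nabla f\|_{L^\infty(B_\tau)}$. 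This is essentially Lewy's argument from the second paragraph of p.~1239 of \cite{Lewy77}, with ``real-analytic'' relaxed to ``$C^1$'' as noted in the excerpt.
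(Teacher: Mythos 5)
Your polar-coordinate, ray-by-ray argument is a genuinely different route from the paper's treatment: the paper simply cites Lewy's Lemma 4 and observes that the statement is a rotated special case of Lemma~\ref{l:graph}, which it proves by splitting each chamber into a thin collar near the boundary graphs (handled via the implicit function theorem and the lower gradient bound \eqref{eqn:grad_G_lbd2}) and an interior region handled by the one-dimensional perturbation Lemma~\ref{lem:z_pert_1} along the cross-sections $x \mapsto (x, y_t(x))$. Your direct approach exploits the explicit form $\psi = r^d\sin(d\theta)$, and the key computations are correct: the lower bound $h_{\theta_0}'(r_*) \ge d(\epsilon m_0)^{(d-1)/d}\sin(d\theta_0)^{1/d} - \epsilon L$ at a zero, the constant $c_1 = L^d/(d^d m_0^{d-1})$, and the fact that $\tau L \le d m_0$ makes the two regimes cover $S_j$ (you should make this strict, $\tau L < dm_0$, so that $\sin(d\theta_0) \ge \epsilon m_0/\tau^d$ forces $\sin(d\theta_0) > c_1\epsilon$ strictly and the derivative at a zero is strictly positive).

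There is, however, a genuine gap in the final step. You have shown that each ray $\theta = \theta_0$ in $S_j$ meets $\{\psi - \epsilon f = 0\} \cap B_\tau(0)$ in at most one point, transversally, and that a zero exists if and only if $h_{\theta_0}(\tau) > 0$. From this you conclude that the implicit function theorem parametrizes the nodal set as a graph $r = r(\theta,\epsilon)$ over an \emph{open subinterval} of $S_j$. But what follows from your argument is only that the domain of this graph is the open set $\{\theta_0 \in S_j : h_{\theta_0}(\tau) > 0\}$; you have not shown this set is connected. If it were disconnected, $S_j \cap B_\tau(0)$ would contain several disjoint arcs and the conclusion ``exactly $d$ pairwise disjoint simple curves'' would fail. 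The missing observation is that $\Theta(\theta) := h_\theta(\tau) = \tau^d\sin(d\theta) - \epsilon f(\tau\cos\theta,\tau\sin\theta)$ is a $C^1$-small perturbation of the strictly unimodal bump $\tau^d\sin(d\theta)$ on $S_j$, so that for $\epsilon$ small enough $\Theta' > 0$ on the part of $S_j$ where $\cos(d\theta) \ge 1/2$, $\Theta' < 0$ where $\cos(d\theta) \le -1/2$, and $\Theta > 0$ in between; hence $\{\Theta > 0\} \cap S_j$ is a single interval. A related looseness appears in your Hausdorff-convergence sentence: the pointwise bound $r(\theta,\epsilon) \le (\epsilon\|f\|_{L^\infty}/\sin(d\theta))^{1/d}$ blows up near the sector boundary and does not by itself give uniform closeness. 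One should instead combine the lower bound $r \ge (\epsilon m_0)^{1/d}$ on the nodal arc with the resulting angular estimate to bound the distance from any nodal point to $\{\psi = 0\}$ by $O(\epsilon^{1/d})$, and argue the reverse inclusion by the intermediate value theorem on each circle $\partial B_r(0)$ with $(\epsilon\|f\|_{L^\infty})^{1/d} \lesssim r < \tau$.
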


We also need a variant of Lemma \ref{l:lewy}, in which $\psi(x,y)$ is replaced by a function $G(x,y)$ whose nodal set is given locally by the union of $m$ graphs with simple intersection at a common point. We emphasize that the following lemma is inspired by \cite{Lewy77}.

\begin{lemma}\label{l:graph}
Suppose that $G:B_r(0) \subset \R^2 \ra \R$ takes the form
$G(x,y)  = \prod_{i=1}^m g_i(x,y)$ for some $m\geq 2$,
where $g_1,\dots,g_m:B_r(0) \ra \R$ are real-analytic functions satisfying \begin{itemize}
\item $g_i(0,0) = 0$ and $\partial_y g_i(0,0)\neq 0$ for all $i$,
\item $\{g_i =0 \} \cap \{g_j = 0\} = \{(0,0)\}$ for all $i\neq j$.\end{itemize}
If $F: B_r(0) \ra \R$ is $C^1$ and $F(0,0) > 0$, then there exists $\tau \in (0,r)$ and $\epsilon_0 > 0$ such that for all $\epsilon \in (0, \epsilon_0)$, the nodal set of $G - \epsilon F$ in $B_\tau(0)$ consists of $m$ pairwise disjoint simple curves, one inside each of the $m$ connected components of $\{ G > 0\}$, and $$\lim_{\epsilon\rightarrow 0} \mathrm{HD}\left(\{G- \epsilon F=0\}\cap B_\tau(0), \{G=0\}\cap B_\tau(0)\right)=0,$$
The same conclusion holds when $F(0,0)<0$ except that then the nodal set of the perturbation $G-\epsilon F$ lies in $\{G<0\}$. See Figure \ref{fig:graph-scheme}.
\end{lemma}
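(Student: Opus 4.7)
My plan is to adapt Lewy's strategy from \cite[Lemma 4]{Lewy77} to handle the fact that the real-analytic branches $\{g_i=0\}$ may meet tangentially at the origin. First, applying the implicit function theorem to each $g_i$ (using $\partial_y g_i(0,0)\neq 0$) yields real-analytic functions $h_i, a_i$ with $h_i(0)=0$ and $a_i(0,0)\neq 0$ such that $g_i(x,y)=a_i(x,y)(y-h_i(x))$ near the origin. Hence $G(x,y)=A(x,y)\prod_{i=1}^m(y-h_i(x))$ with $A:=\prod_i a_i$ real-analytic and of constant sign near the origin; after shrinking $r$ and possibly replacing $(G,F)$ by $(-G,-F)$ (which exchanges the two cases of the lemma), I may assume $A>0$ and $F\geq c_0>0$ on $B_r(0)$. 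The hypothesis $\{g_i=0\}\cap\{g_j=0\}=\{(0,0)\}$ forces $h_i(x)\neq h_j(x)$ for $x\in(-r,r)\setminus\{0\}$, so on each half-ball $\{\pm x>0\}\cap B_\tau(0)$ the graphs admit a total order and partition the half-ball into $m+1$ strips on which $\prod_i(y-h_i)$ alternates sign. Gluing across $\{x=0\}$ (where the graphs all pinch to the origin), the top and bottom strips each continue to a single chamber spanning $B_\tau(0)$, while the $m-1$ intermediate strips on each side remain pinched at the origin; this yields exactly $2m$ chambers in $B_\tau(0)\setminus\{(0,0)\}$ with alternating signs, and hence $m$ positive chambers $U_1,\dots,U_m$.

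Next, since $G-\epsilon F\leq -\epsilon c_0<0$ on $\{G\leq 0\}$, the zero set of $G-\epsilon F$ lies in $\bigcup_j U_j$ for small $\epsilon$; and any nodal point satisfies $|G|\leq \epsilon\|F\|_{L^\infty}$, from which a standard transversal intermediate-value argument yields the desired Hausdorff convergence $\{G-\epsilon F=0\}\to\{G=0\}$ as $\epsilon\to 0^+$. The main task is to show that $\{G=\epsilon F\}\cap U_j$ is a single simple curve for each $j$. To do this I would foliate $U_j$ by curves along which $G$ is strictly monotone from $0$ (on part of $\partial U_j$) to a positive value, apply the one-dimensional IVT on each leaf to uniquely solve $G=\epsilon F$, and assemble the solutions via the implicit function theorem into a smooth curve. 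For the top and bottom chambers of $G$ (which meet $\partial B_\tau(0)$ transversally), vertical segments $\{x=x_0\}$ between the bounding graph and $\partial B_\tau(0)$ work, since the factorization shows $\partial_y G=A\cdot\partial_y\prod_i(y-h_i)+(\partial_y A)\prod_i(y-h_i)$ has a definite sign near the bounding graph. For the pinched middle chambers vertical segments fail, because $G(x_0,\cdot)$ is only unimodal (vanishing at both ends of the slice); instead I would parameterize each such chamber by $(x,s)$ via $y=h_{\sigma(k)}(x)+s(h_{\sigma(k+1)}(x)-h_{\sigma(k)}(x))$ for $s\in(0,1)$, and show that the compact superlevel set $\{G/F\geq\epsilon\}\cap U_j$ is homeomorphic to a closed disk for small $\epsilon$, with boundary the candidate Jordan curve.

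The main technical obstacle is controlling the nodal set in the pinched middle chambers near the origin, where a priori transversality is lost and one cannot immediately rule out the nodal set splitting into multiple loops or accumulating at the origin. The key estimate is that the lowest-degree term of $\prod_i(y-h_i(x))$ at the origin is exactly $y^m$, giving $|G|=O(|(x,y)|^m)$ uniformly near the origin; combined with $F\geq c_0$, this yields $\{G/F\geq\epsilon\}\cap U_j\subset\{|(x,y)|\geq c\,\epsilon^{1/m}\}$, so that the candidate nodal curve stays in a compact subregion of $U_j$ bounded away from both $\partial U_j$ and the origin, where a uniform implicit function theorem argument in the $(x,s)$-coordinates can be applied to produce a single smooth simple closed curve. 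Assembling the contributions over all $m$ positive chambers completes the proof.
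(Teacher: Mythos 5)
Your overall plan follows the same outline as the paper's proof: normalize each $g_i$ via the implicit function theorem to get $G = A(x,y)\prod_i(y-h_i(x))$ with $A$ nonvanishing, decompose $B_\tau(0)\setminus\{0\}$ into $2m$ alternating chambers, observe the nodal set of $G-\epsilon F$ lives in the positive chambers, and then analyze each chamber via a one-parameter family of trajectories (vertical segments for the unpinched chambers, the convex combinations $y=(1-s)h_k(x)+s\,h_{k+1}(x)$ for the pinched ones). This matches the paper's choice of $y_t(x) = th_{i_0}(x)+(1-t)h_{i_0+1}(x)$.

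Where the proposal falls short is at the crux of the argument: showing that $G-\epsilon F$ has \emph{exactly one} zero along each trajectory. You write that one should ``apply the one-dimensional IVT on each leaf to uniquely solve $G=\epsilon F$,'' but the intermediate value theorem only gives existence, and strict monotonicity of $G$ along the leaf does not imply strict monotonicity of $G-\epsilon F$: near the pinch point the derivative $\partial_x\bigl(G(x,y_s(x))\bigr)\sim x^{\ell^*-1}$ vanishes to high order, so it cannot uniformly dominate $\epsilon\,\partial_x\bigl(F(x,y_s(x))\bigr)$, which is merely $O(\epsilon)$, all the way down to $x=0$. What is actually needed is a quantitative one-dimensional lemma in the spirit of Lewy's Lemma 3: one shows that along each trajectory $G-\epsilon F$ is negative on $[0,\epsilon^{1/(k-1/2)}]$, positive at $\epsilon^{1/(k+1)}$, and strictly increasing on $[\epsilon^{1/(k-1/2)},\tau]$ once $\epsilon$ is small, with constants uniform in the trajectory parameter. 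The paper isolates exactly this as Lemma \ref{lem:z_pert_1} (plus Remark \ref{rmk:zero} for uniformity in $t$), and it is the engine that turns ``$G$ vanishes like $a(t)x^{\ell^*}$'' and ``$F\ge c_0$'' into a unique root. Your crude bound $|G|\lesssim|(x,y)|^m$ shows only that the nodal set avoids a ball of radius $\sim\epsilon^{1/m}$; this is true but is not sharp enough to start the argument, since the relevant vanishing order is $\ell^* = \sum_i\min\{k_i,\ell_{i_0}\}\ge m$ and the gradient lower bound $|\partial_yG|\gtrsim|x|^{\ell^*-\ell_{i_0}}$ (the paper's estimate \eqref{eqn:grad_G_lbd2}) is what beats $\epsilon\|\nabla F\|$ in the boundary layer.

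A related gap is the passage from ``one zero per trajectory'' to ``the nodal set in $U_{j}$ is a single simple curve.'' The trajectories do not foliate the chamber near its boundary graphs, and a priori the nodal set could have extra components hugging $\partial U_j$. The paper resolves this by splitting the chamber into a boundary layer $U_{i_0}^\delta$ (where a lower bound on $|\partial_y(G-\epsilon F)|$ makes the implicit function theorem available and forces the nodal set to be a graph over the $x$-axis) and a middle region $U_{i_0}\setminus U_{i_0}^{\delta/2}$ (where Lemma \ref{lem:z_pert_1} applies), and then showing that every connected component of $\nodal(G-\epsilon F)\cap\overline{U_{i_0}}$ must enter the middle region. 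Your suggestion that $\{G/F\ge\epsilon\}\cap U_j$ is a topological disk is plausible and would suffice, but asserting it does not prove it -- ruling out annular superlevel sets is the same difficulty in disguise, and the ``uniform implicit function theorem'' you invoke requires exactly the quantitative gradient estimates you have not stated. So the proposal has the right architecture but omits the two quantitative lemmas on which the argument actually turns.
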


\begin{figure}\begin{center}%
\includegraphics[width=.31\textwidth]{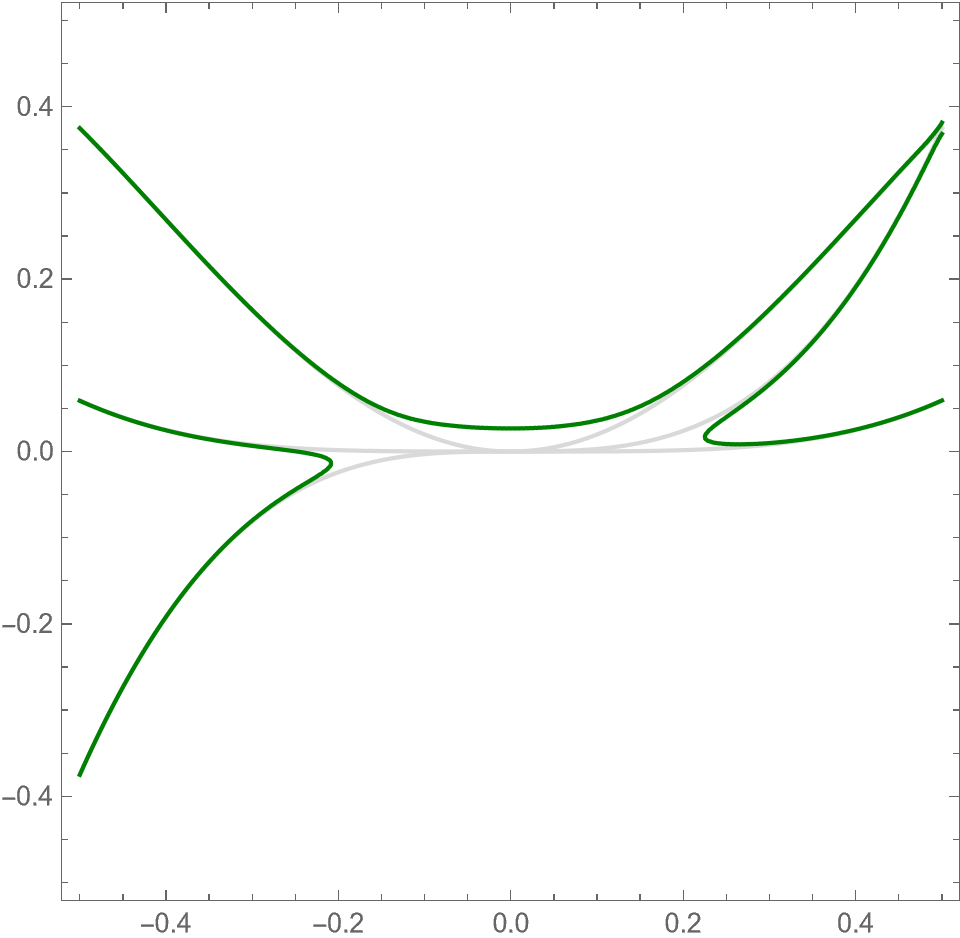}\hfill%
\includegraphics[width=.31\textwidth]{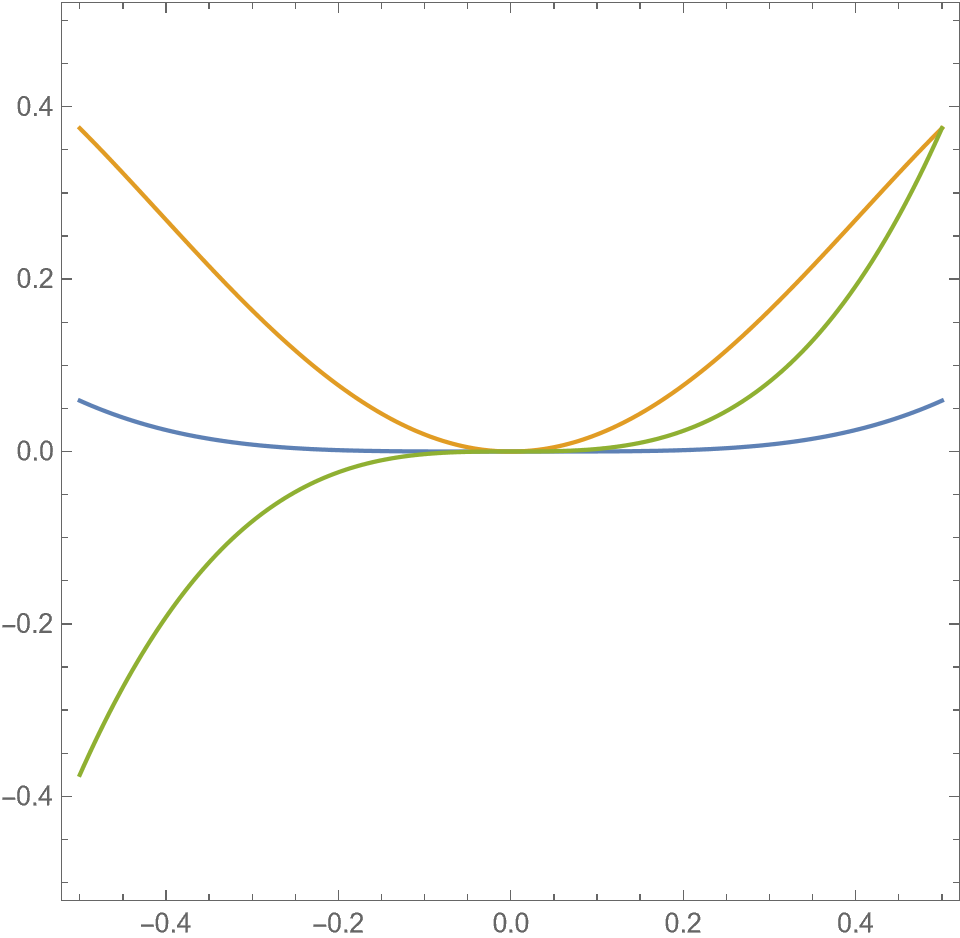}\hfill%
\includegraphics[width=.31\textwidth]{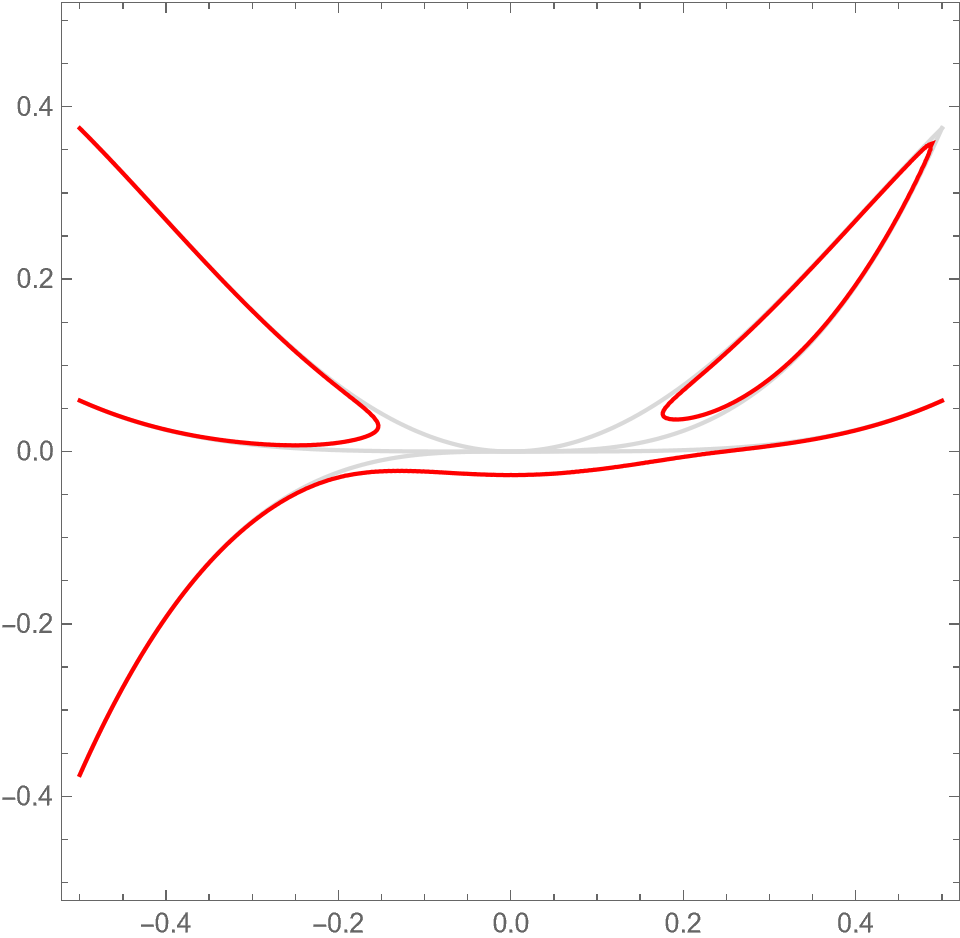}%
\end{center}\caption{Zero set of $G(x,y)=(x^4-y-y^2)(x^2(x^2-1)+\frac12y)(3x^3-y)$ and its perturbation $G-\epsilon F$: $\epsilon=10^{-5}$, $F(x,y)=1$ (left), $F(x,y)=-1$ (right).}\label{fig:graph-scheme}\end{figure}

\begin{rmk}\label{r:signs} When $F(0,0)>0$, the negativity set of $G-\epsilon F$ near the origin is connected. When $F(0,0)<0$, the positivity set of $G-\epsilon F$ near the origin is connected.\end{rmk}

We defer the proof of Lemma \ref{l:graph}, which may be considered a (somewhat long) exercise with the implicit function theorem, to \S\ref{s:appendix}. Note that by rotating coordinates, Lemma \ref{l:lewy} follows from Lemma \ref{l:graph}.

\subsection{Two nodal domains when \texorpdfstring{$d \equiv 2 \pmod 4$}{d is not congruent to 2 mod 4}} \label{subsec:2mod4} When $d=4k+2$ for some $k\geq 0$, the basic hcp $p_d(x,t)=t^{2k+1}+c_{2k}t^{2k}x^2+\cdots+c_0 x^{4k+2}$ in $\R^{1+1}$ (see Definition \ref{def:basic-p}) satisfies \begin{equation*}\label{pd-odd} p_d(0,1)>0\quad\text{and}\quad p_d(0,-1)<0.\end{equation*} This simple observation will let us build hcps in $\R^{2+1}$ of degree $d$ with two nodal domains by essentially copying Lewy's construction of odd degree hhps in $\R^3$ with two nodal domains.

\begin{thm}[{cf.~\cite[Theorem 1]{Lewy77}}]\label{thm:2mod4} Assume $d=4k+2$ for some $k\geq 0$. Let $\psi(x,y)=\Im((x+iy)^d)$ and let $p_d(x,t)$ be the basic hcp in $\R^{1+1}$. For all sufficiently small $\epsilon>0$,
\begin{equation} u_\epsilon(x,y,t) \coloneqq \psi(x,y) - \epsilon p_d(x,t) \label{eqn:lewy_u}
\end{equation} is a time-dependent hcp in $\R^{2+1}$ of degree $d$ and $u_\epsilon$ has two nodal domains.
\end{thm}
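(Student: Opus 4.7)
The plan is to verify first that $u_\epsilon$ is a time-dependent hcp of degree $d$: both $\psi$ (independent of $t$) and $p_d$ (independent of $y$) are hcps of degree $d$, and $p_d$ is time-dependent, so $u_\epsilon$ is too. By Remark \ref{r:sphere}, it then suffices to count the connected components of $\sphere^2\setminus\{u_\epsilon=0\}$. The nodal set of $\psi|_{\sphere^2}$ is the union of $2d$ great semicircles $M_0,\dotsc,M_{2d-1}$ at azimuthal angles $k\pi/d$, meeting exactly at the poles $N=(0,0,1)$ and $S=(0,0,-1)$; the sectors $S_j$ between consecutive meridians alternate in sign, positive for $j$ even and negative for $j$ odd. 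The strategy is to give a local description of the nodal set of $u_\epsilon|_{\sphere^2}$ near each pole via Lemma \ref{l:lewy}, combine it with a $C^1$-perturbation of the meridians via the implicit function theorem in the middle region, and then trace the resulting curves to verify they form a single Jordan loop.

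For the pole analysis, parametrize a neighborhood of $N$ in $\sphere^2$ by $(x,y)\mapsto(x,y,\sqrt{1-x^2-y^2})$ and write $u_\epsilon=\psi(x,y)-\epsilon F_N(x,y)$, where $F_N(x,y)=p_d(x,\sqrt{1-x^2-y^2})$. Since $d=4k+2$, the exponent $d/2=2k+1$ is odd, so Definition \ref{def:basic-p} gives $p_d(0,1)=1$, whence $F_N(0,0)>0$. Lemma \ref{l:lewy} then guarantees that for all sufficiently small $\epsilon>0$, the nodal set of $u_\epsilon$ in a small neighborhood of $N$ consists of $d$ pairwise disjoint simple arcs, one in each positive sector $S_{2j}$, each connecting the two bounding meridians $M_{2j}$ and $M_{2j+1}$. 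Similarly, $p_d(0,-1)=(-1)^{2k+1}=-1<0$, so the second half of Lemma \ref{l:lewy} applied near $S$ yields $d$ simple arcs, one in each negative sector $S_{2j+1}$, each connecting $M_{2j+1}$ to $M_{2j+2}$ (indices mod $2d$). Away from small neighborhoods of $N$ and $S$, the tangential gradient $\nabla_{\sphere^2}\psi$ has a positive lower bound on $\{\psi=0\}\cap\sphere^2$ by compactness, so for small $\epsilon>0$ the implicit function theorem identifies the nodal set of $u_\epsilon$ in this middle region with $2d$ smooth simple arcs, one $C^1$-close to each meridian $M_k$, and no additional components appear since $|\psi|$ is bounded below away from its nodal set.

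To finish, assemble these pieces into a graph $\Gamma$ whose vertices are the $2d$ near-pole arcs and whose edges are the $2d$ perturbed meridians. Since each arc has exactly two endpoints lying on two distinct meridians and each meridian has exactly two endpoints lying on two distinct arcs (one at each pole), $\Gamma$ is a $2$-regular graph on $2d$ vertices. Starting from the arc in $S_0$ at $N$, tracing via $M_0$ to $S$ lands on the arc in $S_{2d-1}$; exiting along $M_{2d-1}$ back to $N$ lands on the arc in $S_{2d-2}$; and so on. At each step the meridian index decreases by one modulo $2d$, so all $2d$ meridians and all $2d$ arcs are visited exactly once before returning to the start. Hence $\Gamma$ is a single $2d$-cycle, the nodal set of $u_\epsilon|_{\sphere^2}$ is a single Jordan curve, and by the Jordan curve theorem $u_\epsilon$ has exactly two nodal domains. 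The main technical obstacle is the single-cycle assertion; it relies crucially on the sign flip $p_d(0,1)>0>p_d(0,-1)$, which is precisely the parity hypothesis $d\equiv 2\pmod 4$. Indeed, for $d\equiv 0\pmod 4$ the values $p_d(0,\pm 1)$ have the same sign, so the pole-arcs at $N$ and $S$ both occupy positive sectors, $\Gamma$ decomposes into $d$ disjoint $2$-cycles, and the construction no longer produces a connected nodal set.
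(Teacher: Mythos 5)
Your proof is correct and follows essentially the same route as the paper: apply Lemma \ref{l:lewy} near each pole using the sign check $p_d(0,1)>0>p_d(0,-1)$ (the $d\equiv 2\pmod 4$ hypothesis), use the implicit function theorem in the equatorial region, and conclude the nodal set on $\sphere^2$ is a single Jordan curve. The only cosmetic difference is that you parametrize the polar caps by vertical projection $(x,y)\mapsto(x,y,\pm\sqrt{1-x^2-y^2})$ rather than via parabolic scaling onto time slices, and you make the final gluing explicit as a trace around a $2d$-cycle where the paper simply asserts that the polar chambers alternate.
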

\begin{proof}It is clear that $u_\epsilon$ is a time-dependent hcp of degree $d$ whenever $\epsilon\neq 0$. To proceed, we argue that the nodal set of $u_\epsilon$ looks like the one in Figure \ref{fig:mathematica_graphics} (right) when $\epsilon>0$ is small.
Note that the nodal set of $\psi|_{\sphere^2}$ has only two singular points: the north and south pole.

We start with a description of the nodal set of $u_\epsilon|_{\sphere^2}$ near the north pole. Define $v_\epsilon (x,y):=\psi(x,y)-\epsilon p_d(x,1)$ so that by parabolic homogeneity, $$u_\epsilon(x,y,t)= t^{d/2} u_\epsilon(x/\sqrt{t},y/\sqrt{t},1)=t^{d/2}v_\epsilon(x/\sqrt{t},y/\sqrt{t})\quad\text{for all $t>0$}.$$ Hence the nodal set of $u_{\epsilon}|_{\sphere^2}$ in each time-slice $\{t=t_0\}$ (with $0<t_0<1$) agrees with the zeros of $v_{\epsilon}$ on the circle $\{x^2 + y^2 = (1-t_0^2)/t_0 \}$. Thus, the nodal set $u_\epsilon$ on a small spherical cap at the north pole is homeomorphic to the nodal set of $v_\epsilon$ in a small disk at the origin (see Figure \ref{fig:hhp6}). Recall that $p_d(0,1)>0$. Therefore, we can use Lemma \ref{l:lewy} to conclude that when $\epsilon>0$ is sufficiently small, the nodal set of $u_\epsilon$ in a small spherical cap at the north pole consists of $d$ ``southward-opening U-shaped'' curves lying in every other longitudinal sector in $\sphere^2$ of angle $\vartheta:=\frac{\pi}{d}$, starting with $\{0 < \theta < \vartheta\}$ and ending with $\{(2d-2)\vartheta<\theta<(2d-1)\vartheta\}$.

For all $t<0$, parabolic homogeneity instead yields $$u_\epsilon(x,y,t)=|t|^{d/2}u_\epsilon(x/\sqrt{|t|},y/\sqrt{|t|},-1)=|t|^{d/2}\big(\psi(x/\sqrt{|t|},y/\sqrt{|t|})-\epsilon p_d(x/\sqrt{|t|},-1)\big).$$ Since $p_d(0,-1)<0$, we can use Lemma \ref{l:lewy} to conclude that when $\epsilon>0$ is sufficiently small, the nodal set of $u_\epsilon$ in a small spherical cap at the south pole consists of $d$ ``northward-opening U-shaped'' curves lying in every other longitudinal sector in $\sphere^2$ of angle $\vartheta=\frac{\pi}{d}$, starting with $\{\vartheta < \theta < 2\vartheta\}$ and ending with $\{(2d-1)\vartheta<\theta<2d\vartheta\}$.

Outside of the polar regions, i.e.~in the complement of the union of fixed spherical caps at the north and south pole, the nodal set of $\psi|_{\sphere^2}(x,y)$ consists of $d$ disjoint smooth arcs, along which $|\nabla \psi(x,y)|\geq c>0$ for some constant $c$ (depending on the size of the caps). Hence the same is true for $u_\epsilon|_{\sphere^2}$ when $\epsilon$ is sufficiently small by the implicit function theorem.

In the end, since the chambers occupied at the north and south pole alternate, we see that when $\epsilon$ is sufficiently small, the nodal set of $u_\epsilon|_{\sphere^2}$ is a single closed, smooth, Jordan curve. Therefore, $u_\epsilon$ has two nodal domains.\end{proof}

\subsection{Two nodal domains when \texorpdfstring{$d$}{d} is odd} \label{subsec:1mod4} See Figure \ref{fig:mathematica_graphics} (middle) for an illustration of the example in Theorem \ref{thm:d_odd} when $d=5$. {Recall that the basic hcp $p_d(x,t)$ is given in Definition \ref{def:basic-p}}.

\begin{thm}\label{thm:d_odd}
Assume $d\geq 3$ is odd. For all sufficiently small $\epsilon>0$ and $\alpha>0$, \begin{align} \label{eqn:d_odd_u}
u_{\epsilon,\alpha}(x,y,t) := y p_{d-1}(x,t) - \epsilon p_d(x\cos\alpha-y\sin\alpha,t)
\end{align} is a time-dependent hcp in $\R^{2+1}$ of degree $d$ and $u_{\epsilon,\alpha}$ has two nodal domains.
\end{thm}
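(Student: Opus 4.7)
The plan is a Lewy-style perturbation: describe the nodal set of $\phi_1 := y\,p_{d-1}(x,t)$ on $\sphere^2$, compute the sign of $\phi_2 := p_d(x\cos\alpha - y\sin\alpha, t)$ at each singular point of $\phi_1|_{\sphere^2}$, and apply Lemmas \ref{l:lewy}, \ref{l:graph} plus the implicit function theorem to conclude that $\{u_{\epsilon,\alpha}=0\}\cap\sphere^2$ is a single smooth embedded Jordan curve, so $\nodal(u_{\epsilon,\alpha}) = 2$ by Remarks \ref{r:sphere} and \ref{r:meanvalue}. That $u_{\epsilon,\alpha}$ is a time-dependent hcp of degree $d$ is immediate by linearity: $y\,p_{d-1}(x,t)$ is caloric because $y$ is spatially harmonic and $p_{d-1}$ solves the heat equation in $\R^{1+1}$, while $p_d(x\cos\alpha - y\sin\alpha, t)$ is caloric because the spatial rotation preserves $\Delta_x$; both terms have parabolic degree $d$, and $\partial_t u_{\epsilon,\alpha} \not\equiv 0$ since $\partial_t\phi_1$ has a factor of $y$ while $\partial_t\phi_2$ does not for $\sin\alpha \ne 0$.

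Using Theorem \ref{lem:hcp_dim_1}, $p_{d-1} = \prod_{j=1}^{k}(t + b_j x^2)$ with $0 < b_1 < \cdots < b_k$ and $k = (d-1)/2$, so $\{\phi_1=0\}\cap\sphere^2$ is the union of the great circle $\Gamma := \sphere^2 \cap\{y=0\}$ with $k$ ``lens'' curves $\ell_j := \sphere^2\cap\{t + b_j x^2 = 0\}$, all lying in $\{t\le 0\}$ and all passing through $(0,\pm 1,0)$ with common tangent direction $(1,0,0)$. The singular locus of $\phi_1|_{\sphere^2}$ then consists of the $2k$ transversal crossings $(\pm s_j^\ast, 0, -b_j(s_j^\ast)^2) = \Gamma \cap \ell_j$ for $1\le j\le k$, together with the two tangential points $(0,\pm 1,0)$ (where all $k$ lenses meet) when $k\ge 2$. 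A short computation using $p_d(x,0) = c_d x^d$ with $c_d > 0$ (from Definition \ref{def:basic-p}) gives $\phi_2(0, \pm 1, 0) = \mp c_d \sin^d\alpha$, while using the factorization $p_d(x,t) = x\prod_i (t + c_i x^2)$ from \eqref{pd-factor} and the interlacing $c_i < b_j$ for $i\le j$, $c_i > b_j$ for $i > j$ from \eqref{pd-interlace}, one finds $\lim_{\alpha\to 0^+}\phi_2(\pm s_j^\ast, 0, -b_j(s_j^\ast)^2) = \pm(-1)^{j}K_j$ for positive constants $K_j$. Hence $\phi_2$ is nonzero at every singular point of $\phi_1|_{\sphere^2}$ provided $\alpha > 0$ is sufficiently small.

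Fix such an $\alpha$, then take $\epsilon > 0$ small enough to apply the perturbation lemmas at every singular point at once. Away from the singular points, the implicit function theorem gives that $\{u_{\epsilon,\alpha}=0\}$ is a smooth one-manifold $C^1$-close to the smooth part of $\{\phi_1=0\}$. At each transversal crossing, Lemma \ref{l:graph} with $m=2$ resolves the local ``X'' into two smooth arcs lying in the two chambers where $\mathrm{sgn}\,\phi_1 = \mathrm{sgn}\,\phi_2$. At each tangential point $(0,\pm 1,0)$, Lemma \ref{l:graph} with $m=k$, applied after dividing out the locally nonzero factor $y$ from $\phi_1$ so that the remainder $\prod_{j}(t + b_j x^2)$ fits the hypothesis (each factor vanishes at the origin with $\partial_t \ne 0$, and distinct factors meet only at the origin), produces $k$ pairwise disjoint smooth arcs, one in each of the $k$ components of $\{\mathrm{sgn}\,\phi_1 = \mathrm{sgn}\,\phi_2\}$ near that point. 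Viewing $\{\phi_1=0\}\cap\sphere^2$ as a graph $\mathcal{G}$ with vertices the singular points and edges the arcs between them, these local resolutions induce a pairing of edge-ends at every vertex, and $\{u_{\epsilon,\alpha}=0\}$ is the disjoint union of cycles in $\mathcal{G}$ determined by this pairing.

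The main obstacle is the final combinatorial step of showing the pairing yields a single Eulerian cycle. The base case $d=3$ ($k=1$, no tangential singularities) is a direct trace: $\mathcal{G}$ has two vertices and four parallel edges, the two transversal resolutions have opposite sign by our computation, and following the edges around gives exactly one loop. For $d\ge 5$, I would instead count chamber-merges: each transversal resolution merges one pair of positive or negative chambers of $\sphere^2\setminus\{\phi_1=0\}$, while the tangential resolution at $(0,\pm 1,0)$ merges all $k$ of the ``positive'' (resp.~``negative'') chambers meeting at that point. The computed signs of $\phi_2$ at the $2k$ transversal crossings and at the two tangential points give a total of $2k-1$ merges in each sign class. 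A direct Euler-formula count shows $\sphere^2\setminus\{\phi_1=0\}$ has $2k$ positive and $2k$ negative chambers, so it suffices to verify that the merge graph on each sign class is a spanning tree; I would do this either by induction on $k$ (peeling off the outermost lens $\ell_k$ together with its two transversal crossings) or by direct inspection using the $x\mapsto -x$ symmetry of $\phi_1$ and the alternating sign pattern $\mathrm{sgn}\,\phi_2(\pm s_j^\ast,\cdot) = \pm(-1)^{j}$ to cut the bookkeeping in half.
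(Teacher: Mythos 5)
Your proposal is correct and follows essentially the same route as the paper: the same Lewy-style perturbation strategy, the same catalog of singular points of $\phi_1|_{\sphere^2}$ (the $2k$ transversal crossings on $\{y=0\}$ plus the two tangential points $(0,\pm 1,0)$), and the same sign computations for $\phi_2$ at those points using the interlacing \eqref{pd-interlace} and the rotation parameter $\alpha$. The only differences are presentational --- you work geometrically with $\Gamma$ and the lenses $\ell_j$ where the paper works in the $(\theta,\phi)$ chart, and you package the final stitching as an Euler-count/spanning-tree argument where the paper appeals to its figures --- and both treatments leave that last combinatorial verification at a comparable level of detail.
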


\begin{proof} Let $d=2k+1$ for some $k\geq 1$. By Theorem \ref{lem:hcp_dim_1}, we can write the basic hcps $p_{d-1}$ and $p_{d}$ in $\R^{1+1}$ from Definition \ref{def:basic-p} as \begin{equation*}p_{d-1}(x,t)=(t+b_1x^2)\cdots(t+b_kx^2)\quad\text{and}\quad p_d(x,t)=x(t+c_1x^2)\cdots(t+c_kx^2)\end{equation*}
for some numbers $0<c_1<b_1<c_2<b_2<\cdots<b_{k-1}<c_k<b_k$. Note that the expression $p_d(x\cos\alpha-y\sin\alpha,t)$ is just the composition of $p_d(x,t)$ with a rotation in the $x$ and $y$ coordinates and the Laplacian is rotationally-invariant. Thus, $p_d(x\cos\alpha-y\sin\alpha,t)$ and $u_{\epsilon,\alpha}(x,y,t)$ are time-dependent hcps in $\R^{2+1}$ for all $\epsilon$ and $\alpha$. To proceed, fix $\varepsilon>0$ and $\alpha>0$ (small) and write $u=u_{\epsilon,\alpha}$, $p=p_{d-1}$, $q=p_d$, and $q_\alpha(x,y,t)=p_d(x\cos\alpha-y\sin\alpha,t)$. Our goal is to show that when $\epsilon$ and $\alpha$ are small enough that $\{u=0\}\cap\sphere^2$ is a Jordan curve, whence $\nodal(u)=\nodal(u|_{\sphere^2})=2$.

\begin{figure}[p]\begin{center}\includegraphics[width=0.795\textwidth]{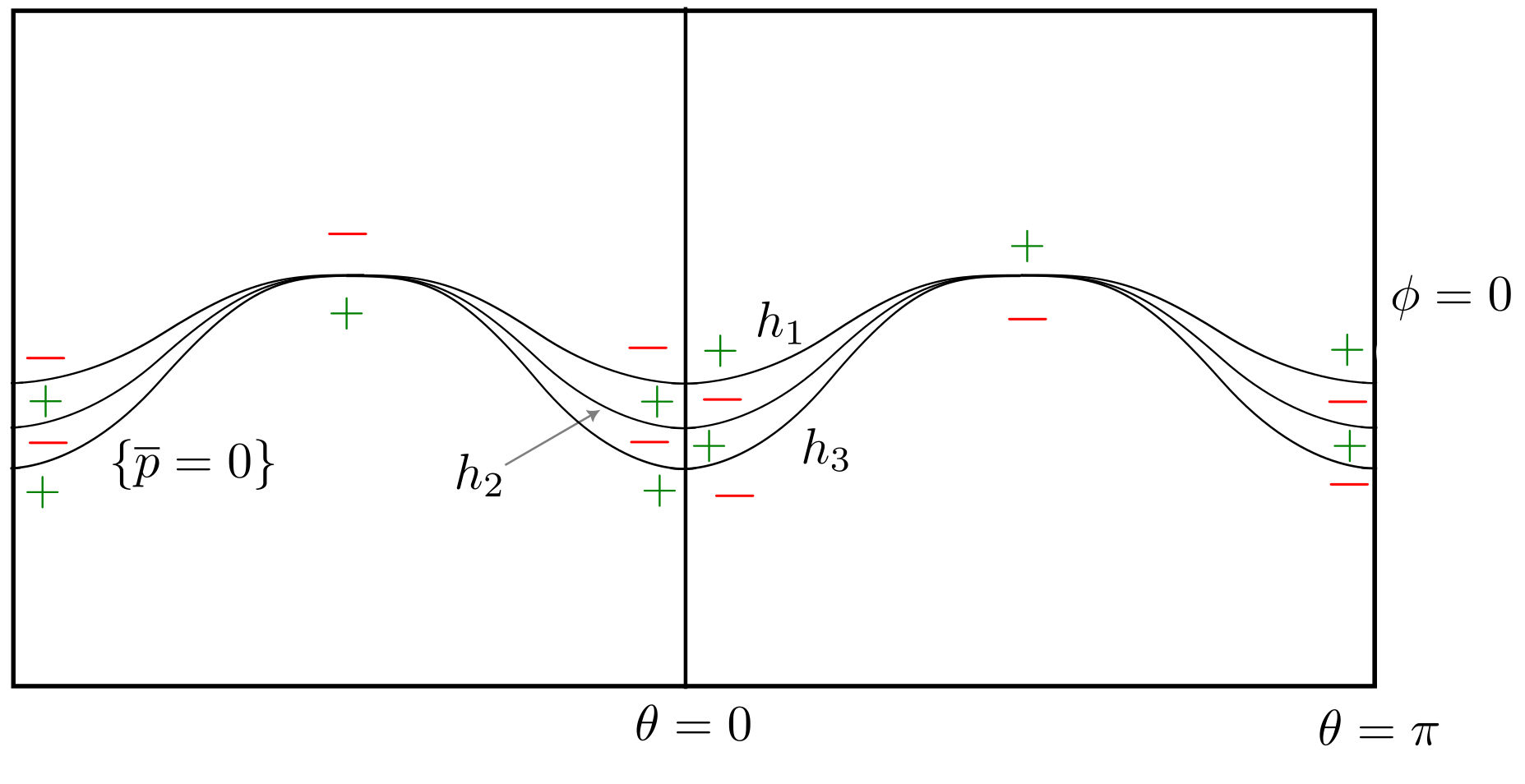}\end{center}
\begin{center}\includegraphics[width=0.795\textwidth]{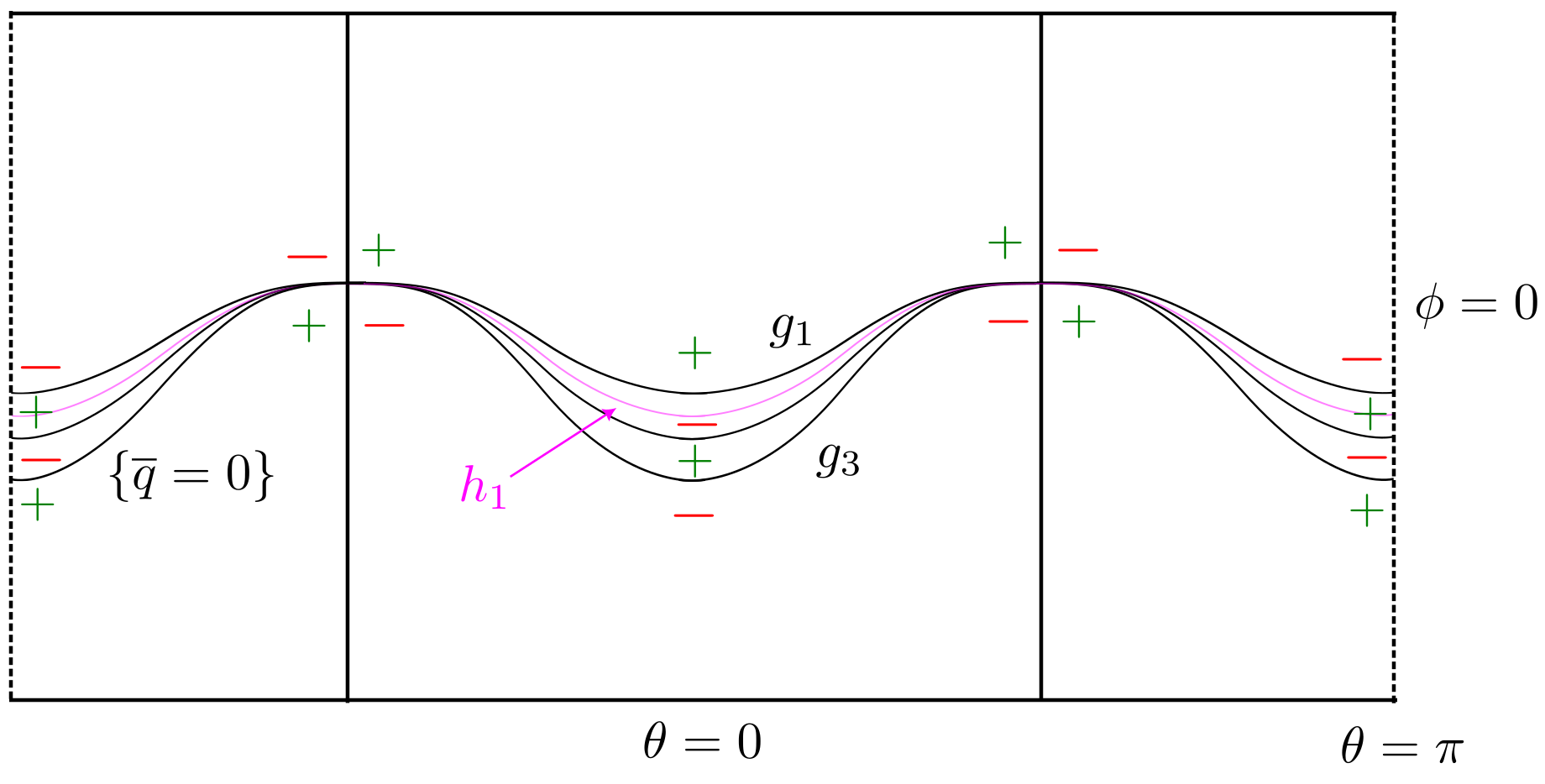}\end{center}
\begin{center}\includegraphics[width=0.795\textwidth]{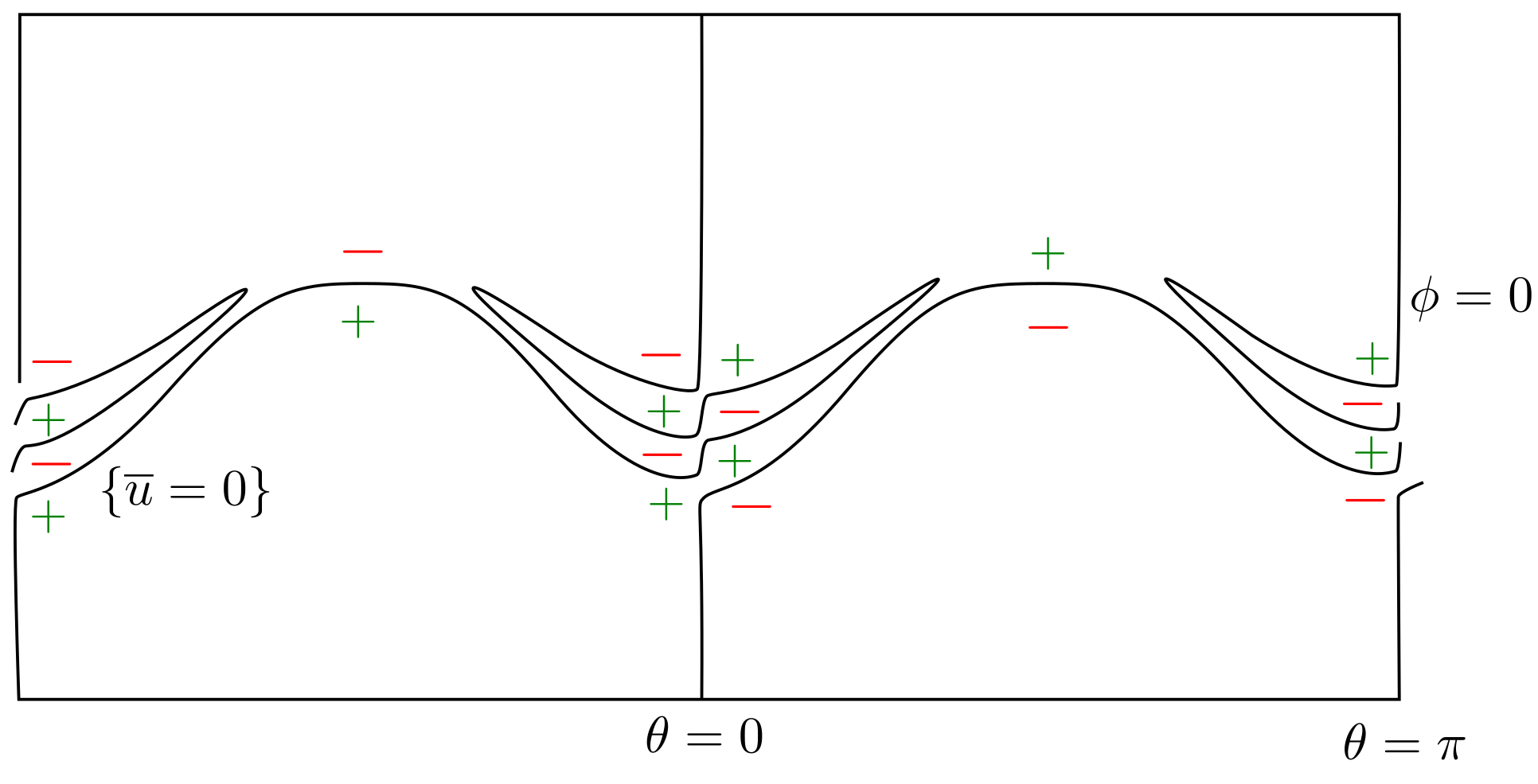}\end{center}
\caption{Proof of Theorem \ref{thm:d_odd} (1/2): Nodal set of $\ol{p}$ (top), $\ol{q}$ (middle), and $\ol{u}$ (bottom) when $k = 3$ and $\epsilon$ and $\alpha$ are sufficiently small.} \label{fig:olp_nodal}
\end{figure}

Consider the standard spherical coordinates on $\sphere^2$ given by
\begin{equation}\label{spherical-coordinates}
x  = \cos \theta \cos \phi, \; y  = \sin \theta \cos \phi, \; t  = \sin \phi,\qquad  -\pi < \theta \le \pi,\; -\pi/2 \le \phi \le \pi/2\end{equation} and write $\ol{p}$, $\ol{q}$, $\ol{q}_\alpha$, and $\ol{u}$ for the functions corresponding to $yp_d(x,t)$, $q(x,t)$, $q_\alpha(x,y,t)$, and $u_{\epsilon,\alpha}(x,y,t)$ on $\sphere^2$ written in spherical coordinates. Hence
\begin{equation}\begin{split}
\ol{p}(\theta,\phi) &= \sin\theta\cos\phi\prod_{i=1}^k\left(\sin\phi + b_i\cos^2\theta\cos^2\phi \right),\\
\ol{q}(\theta,\phi) &= \cos\theta\cos\phi\prod_{i=1}^k\left(\sin\phi + c_i\cos^2\theta\cos^2\phi \right),\\
\ol{q}_\alpha(\theta,\phi) &= \ol{q}(\theta+\alpha,\phi),\quad \ol{u}(\theta,\phi) = \ol{p}(\theta,\phi)-\epsilon \ol{q}_\alpha(\theta,\phi).
\label{ol-defs}\end{split}\end{equation} As an aid for the reader, in Figure \ref{fig:olp_nodal}, we depict nodal domains of $\ol{p}$, $\ol{q}$, and $\ol{u}$ in the $\theta\phi$-plane when $k=3$. When $\alpha>0$ is small, the picture for $\ol{q}_\alpha$ is obtained by translating the picture for $\ol{q}$ slightly to the left. This is crucial for the construction. Observe that the positivity and negativity sets for $\ol{u}$ in the figure are connected. We must explain why this is so.

The nodal set of $\ol{p}$ is comprised of the great circle $\{\theta=0\text{ or }\pi \}$ (including the north and south poles $\{\phi=\pm\frac{\pi}{2}\}$) and the graphs $\{(\theta,h_i(\theta)):-\pi< \theta\leq \pi\}$ of the $k$ functions
\begin{equation} h_i(\theta):=\sin^{-1}\left(\frac{1-\sqrt{1+4b_i^2\cos^4\theta}}{2b_i\cos^2\theta}\right)\qquad(i=1,\dots,k). \label{eqn:yp_soln_i}
\end{equation} To find the formula for $h_i$, expand $\cos^2\phi=1-\sin^2\phi$ in the equation $\sin\phi+b_i\cos^2\theta\cos^2\phi=0$ and use the quadratic formula to solve for $\sin\phi$ (recalling the restriction that $|\sin\phi|\leq 1$). The functions $h_i$ are $\pi$-periodic, $h_i(\pm\pi/2)=0$, and $h_i(\theta)>h_{i+1}(\theta)$ for all $1\leq i\leq k-1$ and $\theta\neq\pm \pi/2$, since $b_1<\cdots<b_k$. Moreover, from the definition of  $\ol{p}$, we observe that $\ol{p}$ takes opposite signs in adjacent nodal domains as indicated in Figure \ref{fig:olp_nodal}.

To understand the nodal structure of $\ol{u}$ when $\epsilon$ is small using Lemma \ref{l:graph} and Remark \ref{r:signs}, we must determine the signs of $\ol{q}_\alpha$ at the singular points in the nodal set of $\ol{p}$ (i.e.~the points where two or more nodal lines of $\ol{p}$ intersect). See Figure \ref{fig:pert_1}.

\begin{figure}
\begin{center}\includegraphics[width=.8\textwidth]{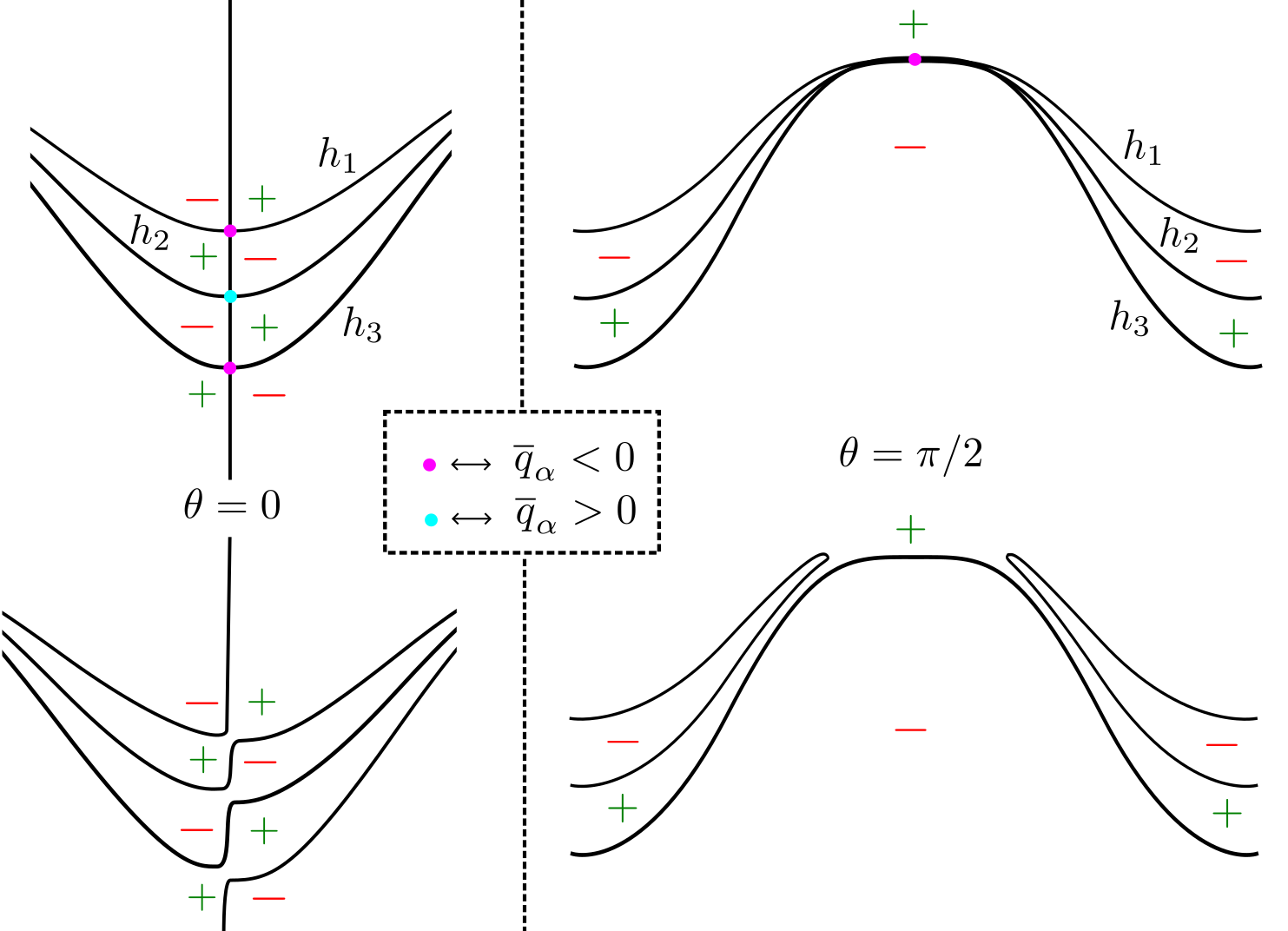} \end{center}
\caption{Proof of Theorem \ref{thm:d_odd} (2/2): Nodal sets of $\ol{p}$ (top) and $\ol{u}$ (bottom) near $\theta = 0$ (left) and $\theta=\pi/2$ (right) when $k=3$. The sign of $\ol{q}_\alpha$ at singular points in the nodal set of $\ol{p}$ determines the local configuration of nodal domains of $\ol{u}$ (see Lemma \ref{l:graph} and Remark \ref{r:signs}).}\label{fig:pert_1}
\end{figure}

The nodal set of $\ol{q}$ is the union of the great circle $\{\theta=\pm\pi/2\}$ and the graphs $\{(\theta,g_i(\theta)):-\pi<\theta\leq \pi\}$ of the $k$ functions \begin{equation} \label{eqn:q_soln_i} g_i(\theta):=\sin^{-1}\left(\frac{1-\sqrt{1+4c_i^2\cos^4\theta}}{2c_i\cos^2\theta}\right)\qquad(i=1,\dots,k). \end{equation} Because of the interlacing property $c_1<b_1<\cdots<c_k<b_k$, the order of the graphs of the functions $g_i$ and $h_i$ alternate: $g_1\geq h_1\geq \cdots \geq g_k\geq h_k$ with strict inequality when $\theta\neq\pm\pi/2$. In particular, along the lines $\theta=0$ and $\theta=\pi$, the sign of the function $\ol{q}$ at the singular point in the nodal set of $\ol{p}$ corresponding to $h_i$ is $(-1)^i$ when $\theta=0$ and $(-1)^{i+1}$ when $\theta=\pi$. By continuity, the same alternating sign pattern persists for $\ol{q}_\alpha$ if $\alpha$ is sufficiently small. At the two remaining singular points $(-\pi/2,0)$ and $(\pi/2,0)$ in the nodal set of $\ol{p}$, the function $\ol{q}$ is zero.\footnote{Exceptionally, when $d=3$ and $k=1$, the nodal set for $\ol{p}$ is regular at $(\pm\pi/2,0)$.}  This is why we introduce the parameter $\alpha$. Taking $\alpha>0$ (and small), we get $\ol{q}_\alpha(-\pi/2,0)=\ol{q}((-\pi/2)+\alpha,0)>0$ and $\ol{q}_\alpha(\pi/2,0)=\ol{q}((\pi/2)+\alpha)<0$.

By the perturbation lemma, the nodal lines for $\ol{p}$ transform into the nodal lines for $\ol{u}$ as indicated in the figures provided that $\epsilon$ is sufficiently small (with $\alpha$ fixed before $\epsilon$). Outside of a neighborhood of the singular points, the nodal lines for $\ol{u}$ are homeomorphic to the nodal lines for $\ol{p}$ by the implicit function theorem; cf.~proof of Theorem \ref{thm:2mod4}. (We have purposely ignored the singularities of $\ol{p}$ on the lines $\phi=\pm\pi/2$, because the nodal lines of $p|_{\sphere^2}$ are regular at the north and south poles.) A careful stitching of the local structure of $\{\ol{u} =0\}$ yields that $\{u=0\} \cap \sphere^2$ is a single Jordan curve.
%Although our pictures have been drawn for $k$ odd, the same exact construction holds for $k$ even as well. Notice that there is a qualitative difference in $k$ even or odd though, since the number of graphs $h_i$ determined whether $\overline{p}$ has the same (or different) sign above and below all of the $h_i$ at $\theta = \pi/2$.
\end{proof}

\subsection{Three nodal domains when \texorpdfstring{$d \equiv 0 \pmod 4$}{d is not congruent to 0 mod 4}}\label{subsec:0mod4} See Figure \ref{fig:mathematica_graphics} (left) for an illustration of the example in Theorem \ref{thm:0mod4} when $d=4$. {Recall that the basic hcp $p_d(x,t)$ is given in Definition \ref{def:basic-p}.}

\begin{thm}\label{thm:0mod4}
Assume $d=4k$ for some $k\geq 1$. For all sufficiently small $\epsilon > 0$ and $\alpha >0$,
\begin{align}\label{eqn:u_0_mod_4}
u_{\epsilon,\alpha}(x,y,t) := p_{2k}(x,t)p_{2k}(y,t) + \epsilon p_{2k+1}(x\cos\alpha-y\sin\alpha,t)p_{2k-1}(x\sin\alpha+y\cos\alpha,t)
\end{align}
is a time-dependent hcp in $\R^{2+1}$ of degree $d$ and $u_{\epsilon,\alpha}$ has three nodal domains.
\end{thm}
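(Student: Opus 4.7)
The plan is to adapt the perturbation strategy of Theorems \ref{thm:2mod4} and \ref{thm:d_odd}. First, I would verify that $u_{\epsilon,\alpha}$ is a time-dependent hcp of degree $4k$. Writing $X=x\cos\alpha-y\sin\alpha$ and $Y=x\sin\alpha+y\cos\alpha$, the orthogonality of this rotation gives $\nabla(p_{2k+1}(X,t))\cdot\nabla(p_{2k-1}(Y,t))\equiv 0$, so the cross-term in $\Delta(fg)=(\Delta f)g+f(\Delta g)+2\nabla f\cdot\nabla g$ vanishes and the product $p_{2k+1}(X,t)p_{2k-1}(Y,t)$ is caloric; similarly $p_{2k}(x,t)p_{2k}(y,t)$ is caloric and in fact coincides with the basis element $p_{(2k,2k)}$ of Lemma \ref{lem:gen}. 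Both summands are parabolically homogeneous of degree $4k$, and for $\epsilon\neq 0$ they are linearly independent, so $u_{\epsilon,\alpha}$ is a time-dependent hcp of degree $4k$. By Remark \ref{r:sphere}, it suffices to show that $\{u_{\epsilon,\alpha}|_{\sphere^2}=0\}$ is the disjoint union of exactly two simple closed curves, as the Jordan curve theorem on $\sphere^2$ then yields $\nodal(u_{\epsilon,\alpha})=3$.

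Next, using the spherical coordinates \eqref{spherical-coordinates}, I would describe the unperturbed nodal set. Set $\phi_1 := p_{2k}(x,t)p_{2k}(y,t)$ and $\phi_2 := p_{2k+1}(X,t)p_{2k-1}(Y,t)$. Because $p_{2k}(z,t)=\prod_{i=1}^k(t+b_iz^2)>0$ whenever $t>0$, the set $\{\phi_1|_{\sphere^2}=0\}$ lies in the closed lower hemisphere and consists of $k$ curves $\phi=h_i^x(\theta)$ from the $x$-factor (as in \eqref{eqn:yp_soln_i}) together with $k$ analogous curves $\phi=h_i^y(\theta)$ from the $y$-factor. For $k\geq 2$, the singular points of $\{\phi_1|_{\sphere^2}=0\}$ are of two kinds: the four equatorial points $(\pm\pi/2,0)$, where all $k$ curves $h_i^x$ meet tangentially, and $(0,0),(\pi,0)$, where all $k$ curves $h_i^y$ do the same; and the transverse pairwise intersections $h_i^x\cap h_j^y$ in the open lower hemisphere, at angles with $b_i\cos^2\theta=b_j\sin^2\theta$. (When $k=1$, only the interior intersections are singular.) I would enumerate all chambers and record the sign of $\phi_1$ on each.

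Third, I would compute the sign of $\phi_2$ at each singular point with $\alpha>0$ taken small enough that every such sign is nonzero. The rotation by $\alpha$ is essential exactly as in Theorem \ref{thm:d_odd}: since $p_{2k\pm 1}(z,t)$ carry $z$ as a factor, without the rotation $\phi_2$ would vanish at each equatorial singular point. A direct leading-order computation at $t=0$ yields $\phi_2<0$ at $(\pm\pi/2,0)$ and $\phi_2>0$ at $(0,0),(\pi,0)$ for small $\alpha>0$. At each interior intersection $h_i^x\cap h_j^y$, the sign of $\phi_2$ is determined by where $X\approx x$ sits relative to the roots of $p_{2k+1}(\cdot,t)$ and $Y\approx y$ relative to the roots of $p_{2k-1}(\cdot,t)$, and is governed by the interlacing \eqref{pd-interlace} between $p_{2k-1},p_{2k},p_{2k+1}$ in a way that can be catalogued systematically.

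Finally, Lemma \ref{l:graph} applied at each singular point with $G=\phi_1$ and $F=-\phi_2$ describes the local structure of $\{u_{\epsilon,\alpha}=0\}$ for small $\epsilon>0$: the nodal branches of $\phi_1$ through the point are replaced by disjoint smooth arcs lying entirely on one side of $\{\phi_1=0\}$, selected by the sign of $\phi_2$ per Remark \ref{r:signs}; away from singular points the implicit function theorem provides a smooth deformation. The main obstacle, which I expect to dominate the argument, is the global stitching: one must verify that these many local resolutions assemble into \emph{exactly} two simple closed curves on $\sphere^2$. My expectation is that the interlacing \eqref{pd-interlace} forces a compatible alternating sign pattern of $\phi_2$ along the nodal skeleton of $\phi_1$, so that at adjacent singular points the perturbation resolves crossings into complementary sides of $\{\phi_1 = 0\}$; globally this merges nodal chambers of $\phi_1$ into precisely three nodal domains of $u_{\epsilon,\alpha}$ separated by two Jordan curves.
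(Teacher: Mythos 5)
Your outline matches the paper's proof step for step: same choice $\phi_1 = p_{2k}(x,t)\,p_{2k}(y,t)$ and $\phi_2 = p_{2k+1}\,p_{2k-1}$ with a small rotation by $\alpha$, same reduction to counting Jordan curves on $\sphere^2$ via Remark \ref{r:sphere}, same spherical-coordinate description of $\{\phi_1=0\}$ as $2k$ graphs meeting at the four equatorial points and at $4k^2$ transverse interior crossings, same application of Lemma \ref{l:graph} with $G=\ol{p}$ and $F=-\ol{q}_\alpha$, and your sign determinations $\phi_2>0$ at $(0,0),(\pi,0)$ and $\phi_2<0$ at $(\pm\pi/2,0)$ agree with the paper's \eqref{less-crazy-signs}. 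The only part you defer as an ``expectation'' — the sign catalogue at the interior crossings and the global stitching — is precisely what the paper nails down with the formula $\mathrm{sgn}\,\ol{q}(\theta_{i,j},h_i^p(\theta_{i,j})) = (-1)^{i+j+1}$ and $\mathrm{sgn}\,\ol{q}(-\theta_{i,j},h_i^p(\theta_{i,j})) = (-1)^{i+j}$ (from the interlacing \eqref{eqn:order1}--\eqref{eqn:order2}), from which it reads off that the perturbation gains horizontal connectivity at $\pm\theta_{i,j}$-type crossings in alternating rows and vertical connectivity in the others, so that all positive chambers merge and exactly two negative chambers survive; to turn your outline into a proof you would need to supply that explicit sign pattern and carry out the (admittedly tedious) chamber-merging bookkeeping, as the paper does.
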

\begin{proof} The shell of the proof is the same as for proof of Theorem \ref{thm:d_odd}, but the construction is more intricate.
Assign $p(x,y,t) := p_{2k}(x,t)p_{2k}(y,t)$ and $q(x,y,t):=p_{2k+1}(x,t)p_{2k-1}(y,t)$. By Theorem \ref{lem:hcp_dim_1}, we may express
\begin{align*}
p(x,y,t) & = (t + b_1 x^2) \cdots (t + b_k x^2) (t+ b_1 y^2) \cdots (t + b_ky^2),\\
q(x,y,t) & = xy(t + c_1 x^2)\cdots (t + c_{k} x^2)(t+a_1 y^2) \cdots (t + a_{k-1} y^2),
\end{align*} where $a_1,\dots,a_{k-1}$, $b_1,\dots,b_k$, and $c_1,\dots, c_k$ are positive constants satisfying \eqref{pd-interlace}. Fix small parameters $\epsilon>0$ and $\alpha>0$ and assign $q_\alpha(x,y,t) := q(x \cos \alpha - y \sin  \alpha, x \sin \alpha + y \cos \alpha, t)$ and $u:=u_{\epsilon,\alpha}=p+\epsilon q_\alpha$.  Since the Laplacian is rotationally invariant, we conclude that $q_\alpha$ and $u$ are time-dependent hcps of degree $d$. Our goal is to prove that the nodal set of $u|_{\sphere^2}$ is the union of two disjoint, closed Jordan curves, which implies that $u$ has 3 nodal domains.

\begin{figure}[p]
\begin{center}
\includegraphics[width=0.8\textwidth]{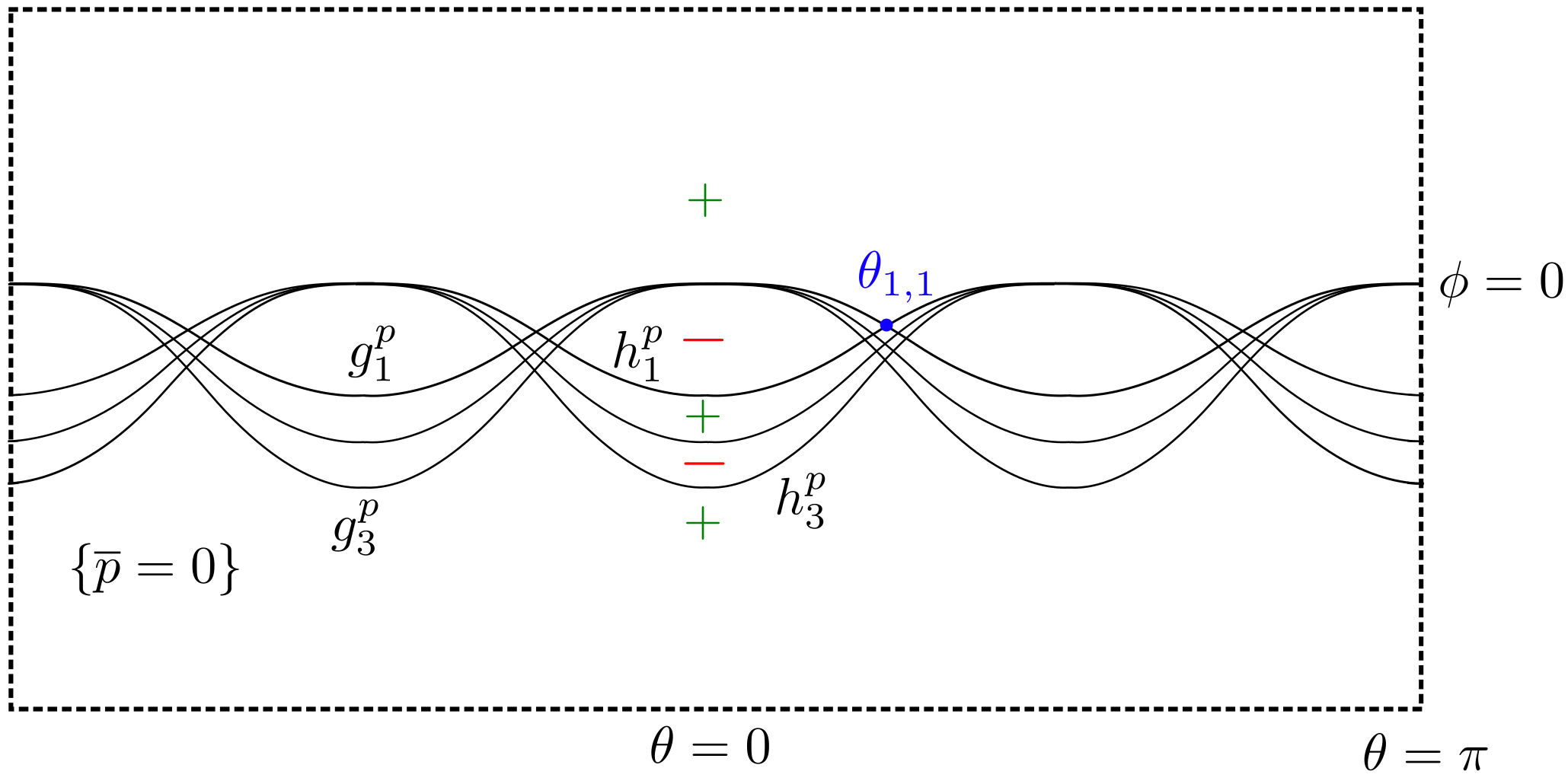}\end{center}
\begin{center}\includegraphics[width=0.8\textwidth]{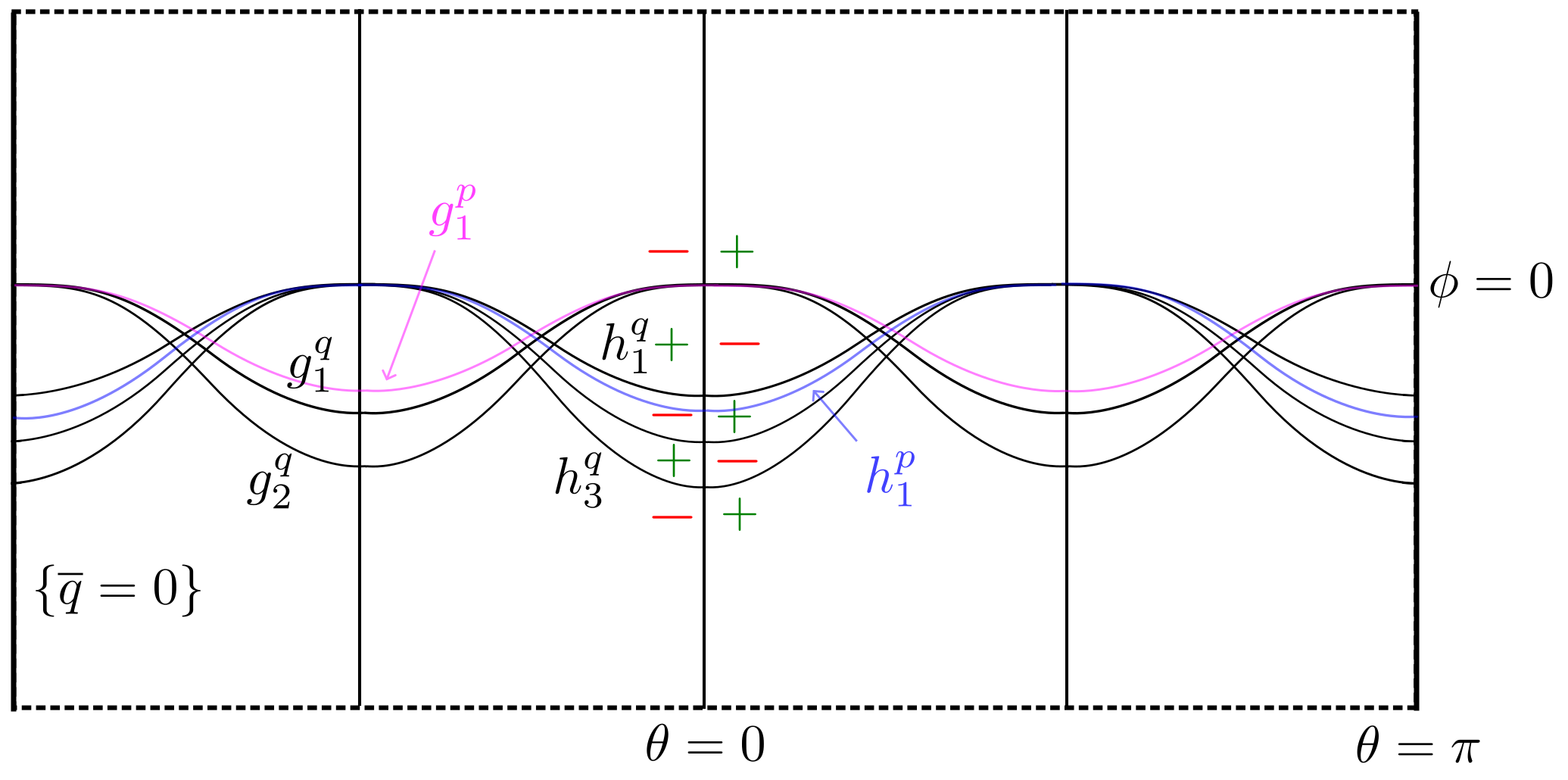}\end{center}
\begin{center}\includegraphics[width=0.8\textwidth]{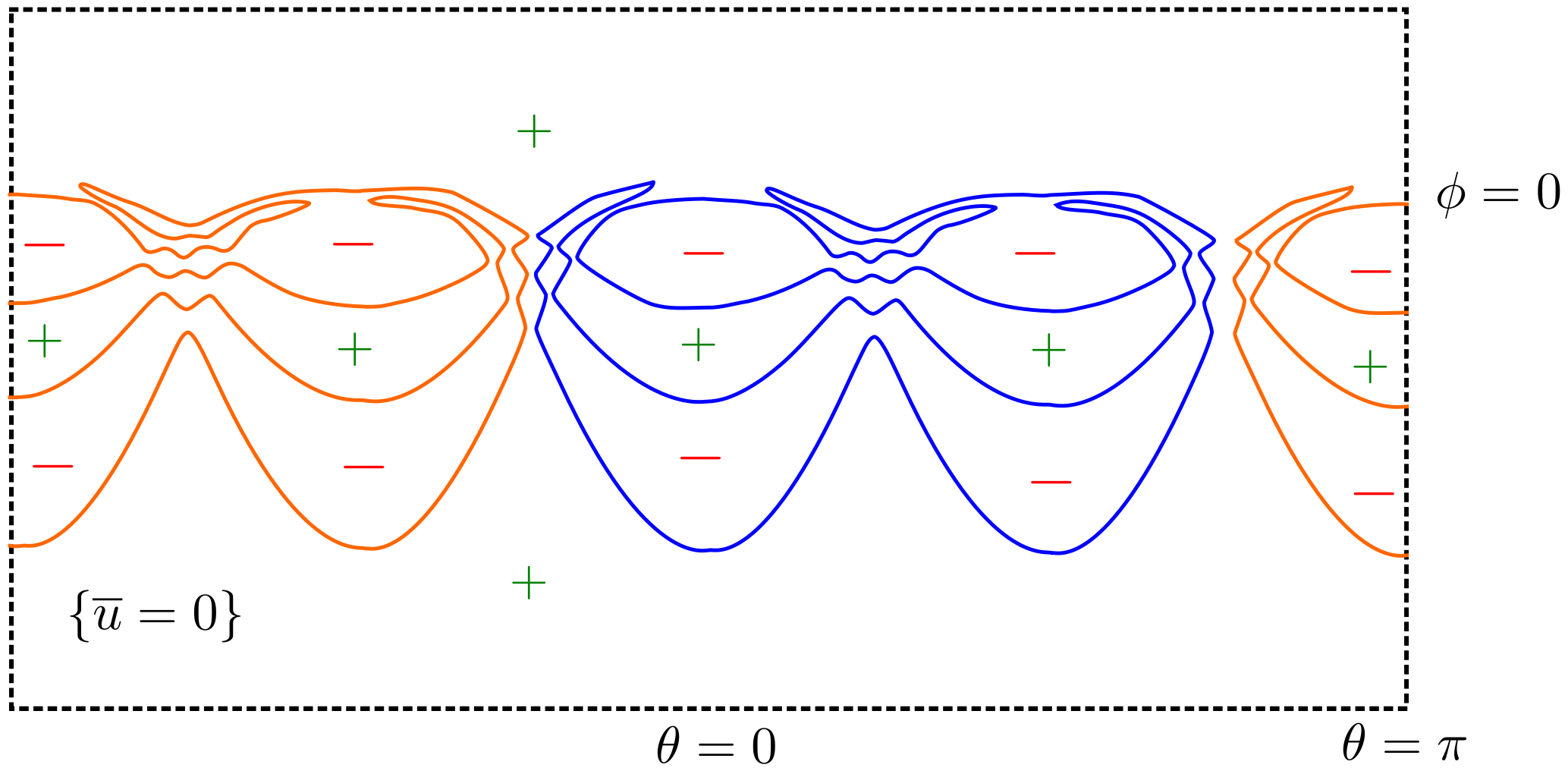}\end{center}
\caption{Proof of Theorem \ref{thm:0mod4} (1/2): The nodal set of $\ol{p}$ (top), $\ol{q}$ (middle), and $\ol{u}$ when $k = 3$ and $\epsilon$ and $\alpha$ are sufficiently small. The two Jordan curves which form $\{\overline{u}=0\}$ are depicted in blue and orange.}\label{fig:0m4_p_nodal}
\end{figure}

Once again, we write $\ol{p}$, $\ol{q}$, $\ol{q}_\alpha$, and $\ol{u}$ to denote the functions $p$, $q$, $q_\alpha$, and $u$ expressed in the standard spherical coordinates \eqref{spherical-coordinates}. Thus,
\begin{equation}
\begin{split}
\ol{p}(\theta, \phi) & = \prod_{i=1}^k (\sin\phi + b_i \cos^2\theta \cos^2\phi)\prod_{i=1}^k(\sin\phi+b_i\sin^2 \theta \cos^2 \phi), \\
\ol{q}(\theta, \phi) & = \cos \theta \sin \theta \cos^2\phi \prod_{i=1}^{k} (\sin\phi+c_i\cos^2\theta\cos^2 \phi)  \prod_{i=1}^{k-1} (\sin\phi+a_i\sin^2\theta\cos^2 \phi).
\end{split}
\end{equation} See Figure \ref{fig:0m4_p_nodal} for an illustration of the nodal domains of $\ol{p}$, $\ol{q}$, and $\ol{u}$ when $k=3$. The remainder of the proof is devoted to showing that $\ol{u}$ has two disjoint, closed nodal curves, bounding one positive component and two negative components.

The nodal set of $\ol{p}$ is the union of the $2k$ graphs of the functions $h^p_i(\theta)$ defined by \eqref{eqn:yp_soln_i} and the functions $g^p_i(\theta):=h^p_i(\theta-\pi/2)$ for all $1\leq i\leq k$. (Here we used the elementary fact that the positive $y$-axis is the rotation of the positive $x$-axis by $\pi/2$ radians.) Recall that the functions $h^p_i$ are $\pi$-periodic, $h^p_i(\pm\pi/2)=0$, and $h^p_i(\theta)>h^p_{i+1}(\theta)$ for all $1\leq i\leq k-1$ and $\theta\neq\pm \pi/2$. Similarly, the functions $g^p_i$ are $\pi$-periodic, $g^p_i(0)=g^p_i(\pi)=0$, and $g^p_i(\theta)>g^p_{i+1}(\theta)$ for all $1\leq i\leq k -1$ and $\theta\neq 0,\pi$. The sign of $\ol{p}$ changes across adjacent nodal domains.

\begin{figure}
\begin{center}\includegraphics[width=.4\textwidth]{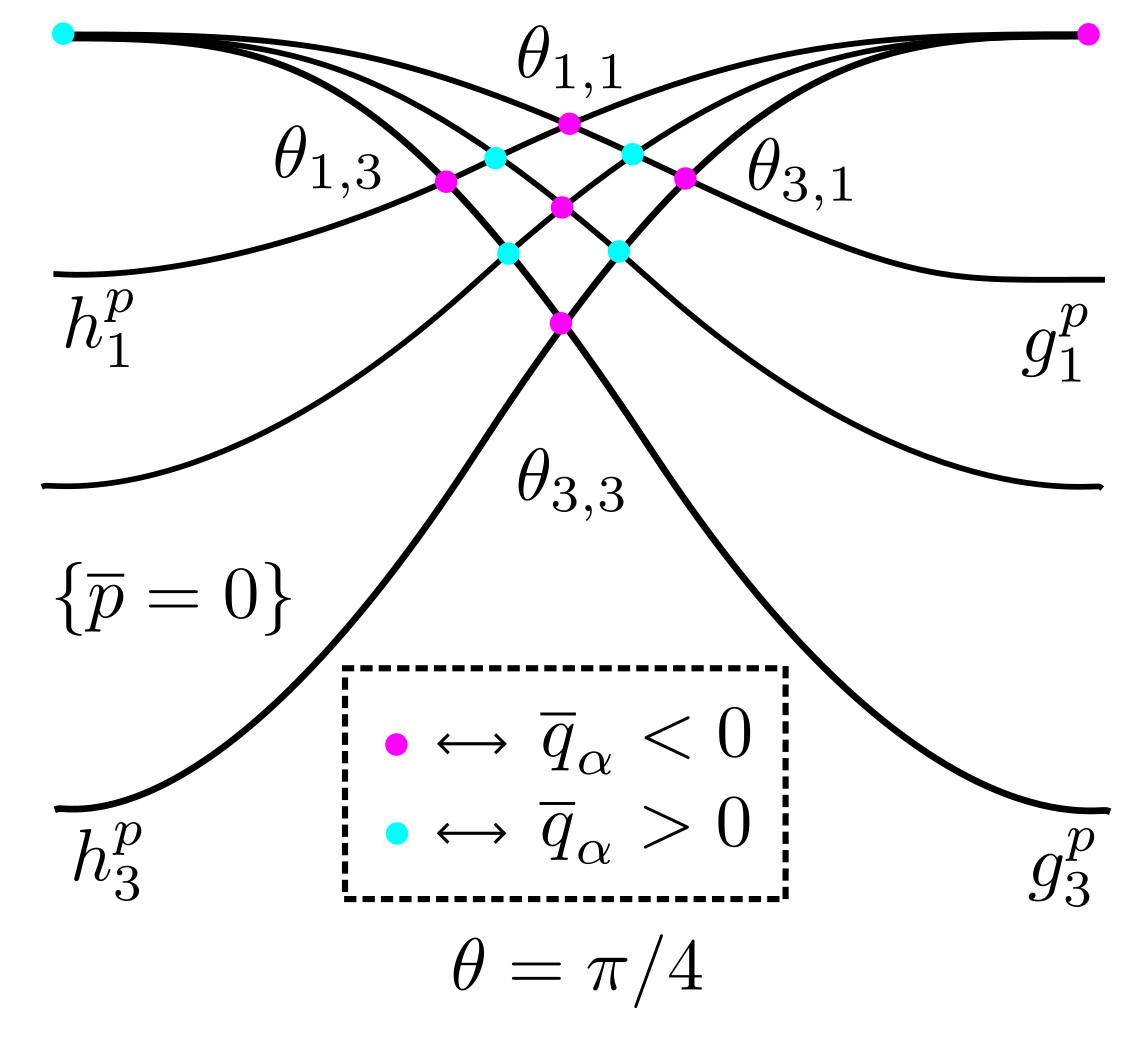}\end{center}%
\caption{Proof of Theorem \ref{thm:0mod4} (2/2): Sign pattern for $\ol{q}_\alpha$ at singular points in the nodal set of $\ol{p}$ near $\theta = \pi/4$ when $k = 3$.}\label{fig:0m4_theta}
\end{figure}

As usual, we must next identify the singular points in the nodal set of $\ol{p}$. The points $(-\pi/2,0)$ and $(\pi/2,0)$ common to the graph of each $h^p_i$ and the points $(0,0)$ and $(\pi,0)$ common to the graph of each $g^p_i$ are singular points.\footnote{When $d=4$ and $k=1$, the nodal set for $\ol{p}$ is regular at $(-\pi/2,0)$, $(0,0)$, $(\pi/2,0)$, and $(\pi,0)$.} In addition, the nodal set of $\ol{p}$ has $4k^2$ singular points where the graph of an $h^p_i$ intersects the graph of a $g^p_j$. For each $1\leq i,j\leq k$, there is a unique angle $\theta_{i,j}\in(0,\pi/2)$ so that the graphs of $h^p_i$ and $g^p_j$ intersect precisely at
\begin{equation}\label{hg-crossings}
(\theta_{i,j}, h_i^p(\theta_{i,j})),\quad (\theta_{i,j}-\pi, h_i^p(\theta_{i,j})),\quad (-\theta_{i,j}, h_i^p(\theta_{i,j})),\quad (-\theta_{i,j}+\pi, h_i^p(\theta_{i,j})).\end{equation}
%Here the $\theta_{i,j}$ for $1 \le i, j \le k$ are the unique angles $\theta \in (0,\pi/2)$ where $h_i^p(\theta) = g_j^p(\theta)$ (the uniqueness of such an angle follows from the fact that the $h_i^p$ are strictly increasing on $(0, \pi/2)$, while the $g_i^p$ are strictly decreasing there). Notice that by periodicity of $h_i^p$ and $g_j^p$ (as well as evenness about the angles $-\pi/2, 0, \pi/2, \pi$), the graphs of $h_i^p$ and $g_j^p$ intersect $3$ more times in the rectangle $R$: at the angles $ \theta_{i_,j} - \pi, -\theta_{i,j}$, and $-\theta_{i,j} + \pi$, which gives the description of $S$ above.

The nodal set of $\ol{q}$ includes two great circles $\{(\theta, \phi) \; : \; \theta = -\pi/2, 0, \pi/2, \pi\}$ and the $2k-1$ graphs of functions $h_i^q$ defined by \begin{equation} h^q_i(\theta):=\sin^{-1}\left(\frac{1-\sqrt{1+4c_i^2\cos^4\theta}}{2c_i\cos^2\theta}\right)\qquad(i=1,\dots,k)
\end{equation} and the functions $g^q_i$ defined by
\begin{equation}
g_i^q(\theta) \coloneqq \sin^{-1} \left( \frac{1-\sqrt{1+4a_i^2\sin^4\theta}}{2a_i\sin^2\theta} \right) \qquad (i=1,\dots, k-1).
\end{equation}
The sign of $\ol{q}$ alternates on adjacent nodal domains. In view of \eqref{pd-interlace}, we have
\begin{align}
h_1^q & \ge h_1^p \ge h_2^q \ge h_2^p \ge \cdots \ge h_{k}^q \ge h_k^p, \label{eqn:order1} \\
g_1^p & \ge g_1^q \ge g_2^p \ge g_2^q \ge \cdots \ge g_{k-1}^q \ge g_k^p, \label{eqn:order2}
\end{align} with strict inequalities in \eqref{eqn:order1} unless $\theta=\pm\pi/2$ and strict inequalities in \eqref{eqn:order2} unless $\theta = 0, \pi$. From the strict inequalities, periodicity, and evenness, one can deduce the following sign pattern for $\ol{q}$ at the singular points in the nodal set of $\overline{p}$:
\begin{equation}\begin{split}\label{crazy-signs}
\mathrm{sgn}(\ol{q}(\theta_{i,j}, h_i^p(\theta_{i,j})) & = \mathrm{sgn}(\ol{q}(\theta_{i,j} - \pi, h_i^p(\theta_{i,j})) = (-1)^{i+j+1}, \\
\mathrm{sgn}(\ol{q}(-\theta_{i,j}, h_i^p(\theta_{i,j})) & = \mathrm{sgn}(\ol{q}(-\theta_{i,j} + \pi, h_i^p(\theta_{i,j})) = (-1)^{i+j}.
\end{split}\end{equation} See Figure \ref{fig:0m4_theta}. By continuity, \eqref{crazy-signs} persists for $\ol{q}_\alpha$ if $\alpha>0$ is small enough. In addition,
\begin{equation}\label{less-crazy-signs}
\ol{q}_\alpha(0,0) = \ol{q}_\alpha(\pi, 0)  > 0,\quad \ol{q}_\alpha(-\pi/2,0) = \ol{q}_\alpha(\pi/2, 0)  <0.
\end{equation}

Near the singular points of $\ol{p}$, we again apply Lemma \ref{l:graph} / Remark \ref{r:signs} (in a rotated coordinate system, as necessary) to see that locally the nodal set of $\ol{u}$ is given by finitely many simple curves, which are contained either in the positive or negative components of $\{\ol{p} \ne 0\}$, if $\ol{q}_\alpha$ is negative or positive there, respectively. Moreover, the curves approach the nodal set of $\ol{p}$ when $\epsilon \da 0$. (Note that $\ol{u}=\ol{p}+\epsilon\ol{q}_\alpha = \ol{p}-\epsilon(-\ol{q}_\alpha)$.) Careful piecing of all of this information together, one deduces that for $\alpha >0$ and $\epsilon >0$ small, the nodal domain of $\ol{u}$ consists of two disjoint simple curves that separate $\R^{2}$ into three connected components (one positive connected component and two negative connected components).

Indeed away from the singular points of $\ol{p}$, the nodal set of $\ol{u}$ consists of smooth curves tending to the nodal set of $\ol{p}$ as $\epsilon \da 0$. The key observation is then that at the singular points corresponding to the $\theta_{i,j}$ and $\theta_{i,j} - \pi$, connectivity is gained in the horizontal direction. That is to say, at the points $(\theta_{i,j}, h_i^p(\theta_{i,j}))$ and $(\theta_{i+1, j+1}, h_{i+1}^p(\theta_{i+1, j+1}))$, $\ol{q}_\alpha$ has the same sign, which is equal to the sign of $\ol{p}$ at $(\theta, h_i^p(\theta_{i,j})) $ for $\theta \in (\theta_{i,j} - \delta, \theta_{i,j} + \delta) \setminus \{\theta_{i,j}\}$ and $\delta >0 $ sufficiently small. Thus for the singular points $(\theta_{i,j}, h_i^p(\theta_{i,j}))$, horizontally-adjacent chambers of $\ol{p}$ become connected in the nodal domain of $\ol{u}$ by Lemma \ref{l:graph}. On the other hand, connectivity is gained in the vertical direction at the singular points corresponding to the $-\theta_{i,j}, -\theta_{i,j} + \pi$ in similar manner. Along with the fact that all positive chambers of $\ol{p}$ meeting $(0, 0)$ and $(0, \pi)$ become connected in the nodal domain of $\ol{u}$, and all the negative chambers of $\ol{p}$ meeting $(\pm \pi/2, 0)$ become connected in the nodal domains of $\ol{q}$, one can conclude that the positivity set of $\ol{u}$ is connected, and $\ol{u}$ has only $2$ negative chambers. See Figure \ref{fig:0m4_p_nodal}.
\end{proof}

\section{Proof of Lemma \ref{l:graph}} \label{s:appendix}

Towards the proof of Lemma \ref{l:graph}, we start with an easy variation of \cite[Lemma 3]{Lewy77}, which assumed that $g(x)=x^k$ for some integer $k\geq 2$ and $f$ is real-analytic.

\begin{lemma}\label{lem:z_pert_1} Suppose that $f,g:[0,t]\rightarrow\R$ are Lipschitz functions such that $f(0)=1$ and for some numbers $k>1$, $a>0$, and $C>0$, \begin{equation}|g(x)-a x^k|\leq C x^{k+1}\quad\text{for all $x\in[0,1]$,}\end{equation} \begin{equation}|g'(x)-ak x^{k-1}|\leq C x^{k}\quad\text{and}\quad |f'(x)|\leq C\quad\text{for a.e.~$x\in[0,1]$.}\end{equation} There exist $\epsilon_0=\epsilon_0(C,a,k)>0$ such that for all $\epsilon \in (0, \epsilon_0)$, the function $F_\epsilon:[0,1]\rightarrow\R$, \begin{equation}F_\epsilon(x) = g(x) - \epsilon f(x)\quad\text{for all $x\in[0,1]$},\end{equation} has a unique root $x_0\in(0,\tau)$, where $\tau=\min\{1,\frac{1}{2}C^{-1}ak\}$. Moreover, $x_0\in (\epsilon^{1/(k-\frac12)},\epsilon^{1/(k+1)})$, $F_\epsilon(x)<0$ for all $0\leq x<x_0$, $F_\epsilon(x)>0$ for all $x_0<x\leq \tau$, and $F_\epsilon$ is strictly increasing on $[\epsilon^{1/(k-\frac12)},\tau]$.
\end{lemma}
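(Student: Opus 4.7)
The plan is to combine the Intermediate Value Theorem with a strict monotonicity estimate on a carefully chosen subinterval, sandwiched between two regions where the sign of $F_\epsilon$ can be controlled directly. First I would record the trivial consequence $g(0)=0$ of the first estimate (evaluate at $x=0$), so that $F_\epsilon(0)=-\epsilon<0$. The three scales $\epsilon^{1/(k-1/2)}$, $\epsilon^{1/(k+1)}$, and $\tau$ are tuned to the comparison $k/(k-1/2)>1>k/(k+1)$: at scale $x\approx\epsilon^{1/(k-1/2)}$ the leading term $g(x)\sim ax^k$ is still $o(\epsilon)$, while at scale $x\approx\epsilon^{1/(k+1)}$ it already dominates $\epsilon$.

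For strict monotonicity on $[\epsilon^{1/(k-1/2)},\tau]$, I would use $Cx\le ak/2$ on $[0,\tau]$ to obtain $g'(x)\ge akx^{k-1}-Cx^k=x^{k-1}(ak-Cx)\ge (ak/2)x^{k-1}$, whence $F_\epsilon'(x)\ge (ak/2)x^{k-1}-C\epsilon$. For $x\ge\epsilon^{1/(k-1/2)}$ this gives $F_\epsilon'(x)\ge (ak/2)\epsilon^{(k-1)/(k-1/2)}-C\epsilon=(ak/2)\epsilon^{1-1/(2k-1)}-C\epsilon$, which is positive once $\epsilon^{-1/(2k-1)}>2C/(ak)$, i.e.\ for all $\epsilon$ below some $\epsilon_0=\epsilon_0(C,a,k)$; the hypothesis $k>1$ is exactly what makes this exponent negative.

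For the sandwiching sign controls, on $[0,\epsilon^{1/(k-1/2)}]$ the estimate $g(x)\le ax^k+Cx^{k+1}$ together with Lipschitzness $f(x)\ge 1-Cx$ gives $F_\epsilon(x)\le a\epsilon^{k/(k-1/2)}+C\epsilon^{(k+1)/(k-1/2)}+C\epsilon^{1+1/(k-1/2)}-\epsilon$, and all three positive terms are $o(\epsilon)$ since each exponent exceeds $1$, so this is $\le -\epsilon/2<0$ for $\epsilon$ small. At $x_\star:=\epsilon^{1/(k+1)}$ (which lies in $(0,\tau)$ for $\epsilon$ small), the lower bound $g(x_\star)\ge ax_\star^k-Cx_\star^{k+1}=a\epsilon^{k/(k+1)}-C\epsilon$ yields $F_\epsilon(x_\star)\ge a\epsilon^{k/(k+1)}-C\epsilon-\epsilon(1+C\epsilon^{1/(k+1)})$; since $k/(k+1)<1$, the term $a\epsilon^{k/(k+1)}$ dominates all the $O(\epsilon)$ terms, and $F_\epsilon(x_\star)>0$ for $\epsilon$ small.

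Combining the three pieces, $F_\epsilon<0$ on $[0,\epsilon^{1/(k-1/2)}]$, $F_\epsilon$ is continuous and strictly increasing on $[\epsilon^{1/(k-1/2)},\tau]$, and $F_\epsilon(\epsilon^{1/(k+1)})>0$, so by IVT and strict monotonicity there is a unique zero $x_0\in(\epsilon^{1/(k-1/2)},\epsilon^{1/(k+1)})\subset(0,\tau)$ with $F_\epsilon<0$ on $[0,x_0)$ and $F_\epsilon>0$ on $(x_0,\tau]$. The only real difficulty is the bookkeeping of several competing exponents of $\epsilon$ (namely $1/(k-1/2)$, $1/(k+1)$, $1/(2k-1)$, and their combinations); taking $\epsilon_0(C,a,k)$ at the end to be the minimum of the finitely many thresholds produced above makes all four inequalities valid simultaneously.
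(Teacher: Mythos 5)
Your proof is correct and follows essentially the same route as the paper's: you use the same three scales $\epsilon^{1/(k-1/2)}$, $\epsilon^{1/(k+1)}$, $\tau$, show negativity on $[0,\epsilon^{1/(k-1/2)}]$ and positivity near $\epsilon^{1/(k+1)}$ by the same two-sided bounds on $g$ and $f$, and establish strict monotonicity on $[\epsilon^{1/(k-1/2)},\tau]$ via the lower bound on $g'$ combined with $|f'|\le C$, just as the paper does. The only cosmetic difference is that you evaluate positivity at the single point $\epsilon^{1/(k+1)}$ rather than on a subinterval, which is sufficient because monotonicity then carries positivity up to $\tau$.
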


\begin{proof}Let $0<\epsilon\leq \epsilon_0<1$. Write $j=k-\frac{1}{2}$ and $\ell=k+1$, so that $k-1<j<k<\ell$. For all $0\leq x\leq \epsilon^{1/j}$, we have \begin{equation}\begin{split} \label{A1}
F_\epsilon(x) &\leq ax^{k}+Cx^{\ell}-\epsilon(1-Cx)\\
     &\leq a\epsilon^{k/j}+C\epsilon^{\ell/j}+\epsilon (C\epsilon^{1/j}-1)= (a\epsilon^{1/(2j)}+C\epsilon^{3/(2j)}+C\epsilon^{1/j}-1)\epsilon\leq -\frac{1}{2}\epsilon
\end{split}\end{equation} provided that $\epsilon_0$ (hence $\epsilon$) is sufficiently small depending only on $C$, $k$, and $a$. Similarly, for all $\epsilon^{1/\ell}\leq x \leq \min\{1,\frac{1}{2}C^{-1}a\}$, \begin{equation}\begin{split} F_\epsilon(x) \geq ax^{k}-Cx^{k+1}-\epsilon(1+C)&= (a-Cx)x^k-\epsilon(1+C)\\ &\geq \tfrac{1}{2}a \epsilon^{k/\ell}-\epsilon(1+C)\geq (\tfrac12a \epsilon^{-1/\ell}-1-C)\epsilon\geq \epsilon
\end{split}\end{equation} provided that $\epsilon_0$ is sufficiently small depending only on $C$, $k$, and $a$. Since $F$ is continuous, it follows that $F$ has at least one root $x_0$ in the interval $(\epsilon^{1/j},\epsilon^{1/\ell})$. Now, for any $\epsilon^{1/j}\leq x\leq \min\{1,\frac{1}{2}C^{-1}ak\}=:\tau$ at which $F$ is differentiable, \begin{equation}\begin{split} F_\epsilon'(x) \geq ak x^{k-1}-Cx^k-C\epsilon&=(ak-Cx)x^{k-1}-C\epsilon\\& \geq \tfrac{1}{2}ak \epsilon^{(k-1)/j}-C\epsilon\geq (\tfrac{1}{2}ak\epsilon^{-1/(2j)}-C)\epsilon\geq\epsilon
\end{split}\end{equation} provided that $\epsilon_0$ is sufficiently small depending only on $C$, $k$, and $a$. It follows that $F$ is strictly increasing on $[\epsilon^{1/j},\tau]$. Choosing $\epsilon_0$ to be sufficiently small guarantees that $\epsilon^{1/j}<x_0<\epsilon^{1/\ell}\leq \tau$. In conjunction with \eqref{A1}, this shows that $F_\epsilon(x)<0$ for all $0\leq x<x_0$ and $F_\epsilon(x)>0$ for all $x_0<x\leq \tau$. Therefore, $x_0$ is the unique root of $F_\epsilon$ in $(0,\tau)$.\end{proof}

\begin{rmk}\label{rmk:zero}
The proof of Lemma \ref{lem:z_pert_1} shows that in fact, $\epsilon_0$ and $\tau$ can be chosen to depend only on $C$, $k$ and $a_0 >1$ provided that $a \in [a_0^{-1}, a_0]$.
\end{rmk}

Using Lemma \ref{lem:z_pert_1}, we prove Lemma \ref{l:graph}.

%\begin{lemma} Suppose that $h^\pm:[0,1)\rightarrow\R$ are real-analytic functions such that $h^\pm(0)=0$ and $h^-(x)<h^+(x)$ for all $x\in(0,1)$. For all $0<x_0\leq 1$, let \begin{equation}U_{x_0}:=\{(x,y)\in\R^2:0<x<x_0,\; h^-(x)<y<h^+(x)\}\end{equation} be the open region between the graphs of $h^\pm|_{(0,x_0)}$, denoted by \begin{equation}\Gamma^\pm_{x_0}:=\{(x,h^\pm(x)):0<x<x_0\}.\end{equation} If $G$ is real-analytic on $\overline{U_1}$, $G>0$ on $U_1$, $G=0$ on $\Gamma^\pm_1$, $\nabla G(0,0)=0$, $F\in C^1(\overline{U_1})$, and $F(0,0)=1$, then there exist $0<\tau<1$ and $\epsilon_0>0$ such that for all $0<\epsilon\leq \epsilon_0$, the nodal set of $G-\epsilon F=0$ in $U_\tau$ is a $C^1$ Jordan arc $\gamma_\epsilon$ and $\lim_{\epsilon\rightarrow 0} \gamma_\epsilon = \Gamma^+_{\tau}\cup\Gamma^-_\tau$ in the Hausdorff distance. A similar conclusion holds if $h^+\equiv +\infty$ (resp.~$h^-\equiv -\infty$) provided that we interpret $\Gamma^+_{x_0}=\emptyset$ (resp.~$\Gamma^-_{x_0}=\emptyset$) and $U_{x_0}$ to be the area above $\Gamma^-_{x_0}$ (resp.~the area below $\Gamma^+_{x_0}$).\end{lemma}
%

\begin{proof}[Proof of Lemma \ref{l:graph}]
Let $F$ and $G=\prod_{i=1}^m g_i$ be given as in the statement of the lemma and let $\epsilon>0$ be small. Because $G-\epsilon F$ and $-G-\epsilon(-F)=-(G-\epsilon F)$ have the same zero set, we may assume without loss of generality that $F(0,0)>0$. Now, since $\partial_y g_i(0,0) \ne 0$, the real-analytic implicit function theorem (see e.g.~\cite[Theorem 2.3.5]{primer-real-analytic}) implies that in a neighborhood of the origin, the zero set of $g_i$ is given by the graph of real analytic function of one variable, $h_i(x)$, so $g_i(x, h_i(x)) = 0$. Each $h_i$ satisfies $h_i(0) =0$ since $g_i$ vanishes at the origin. Let us continue by proving the result for the nodal set of $G-\epsilon F$ in $\{x \ge 0\}$, since similar reasoning applies to the nodal set in $\{x \le 0\}$.

Note that since the $g_i$ only share a common zero at the origin, the functions $h_i(x)$ are distinct real-analytic functions. In particular, for $\tau$ chosen small enough, then the $h_i$ must be ordered, and thus we may as well assume that $h_1(x) < h_2(x) < \cdots < h_m(x)$ for $ 0 < x < \tau$. For each $i$, we write $h_i(x)  = \sum_{k = k_i}^\infty a_{i,k} x^k$,
where $a_{i,k_i} \ne 0$ is the first non-zero in the expansion of $h_i$.
%\footnote{It does little harm to assume all the $k_i$ are the same here. The idea of the proof does not change, but more technical details are added. Unfortunately though, we do need the result in the case when they are not identical, so we provide a proof of the more general version.}
Of course $k_i \ge 1$ since $h_i(0) = 0$. It is straightforward to see from the fact that the nodal set of $g_i$ is given by the graph of $h_i$, that the Taylor series of $g_i$ takes the form
\begin{align}
g_i(x,y) & = y \left( b_i^* + \sum_{\alpha = (\alpha_1, \alpha_2), \abs{\alpha} \ge 1} b_\alpha x^{\alpha_1} y^{\alpha_2}   \right) + \sum_{k = k_i}^\infty b_{i, k} x^k, \label{eqn:gi_ser}
\end{align}
where $b_i^* = \partial_y g_i(0,0) \ne 0$, and $b_{i, k_i} \ne 0$. In particular, the first nonzero pure $x^k$ term in the expansion for $g_i$ has the same power as that of $h_i$ (and in fact, $a_{i,k_i} = -b_{i, k_i}/b_i^*$).

We remark that since $\partial_y g(0,0) \ne 0$, $g_i$ changes sign about its nodal set and $G$ changes sign about the graphs $h_i$. Note that for $\epsilon_0$ and $\tau$ small, the zero set of $G - \epsilon F$ in $B_\tau(0)$ is contained in the positivity set of $G$, since $F (0,0) > 0$. Hence we consider a chamber \[U_{i_0} \coloneqq B_\tau(0) \cap \{(x,y) \; : \; x > 0, \;  h_{i_0}(x) < y < h_{i_0+1}(x)\}.  \] (the case when one of these chambers is just $\{y > h_m\}$ or $\{ y < h_1\}$ is similar) and prove the conclusion of the lemma inside this chamber. First, we need some estimates on $G$ and $\nabla G$, especially near the graphs of $h_{i_0}$ and $h_{i_0+1}$, so define the region $U_{i_0}^\delta$ for $\delta >0$ small by
\begin{align*}
U_{i_0}^\delta \coloneqq \left \{(x,y) \in U_{i_0} \; : \; y = t h_{i_0}(x) + (1-t) h_{i_0+1}(x) \text{ for some } t \in [0, \delta) \cup (1-\delta, 1] \right \}.
\end{align*}
See Figure \ref{fig:ui0}.

\begin{figure}
\begin{center}\includegraphics[width=0.55\textwidth]{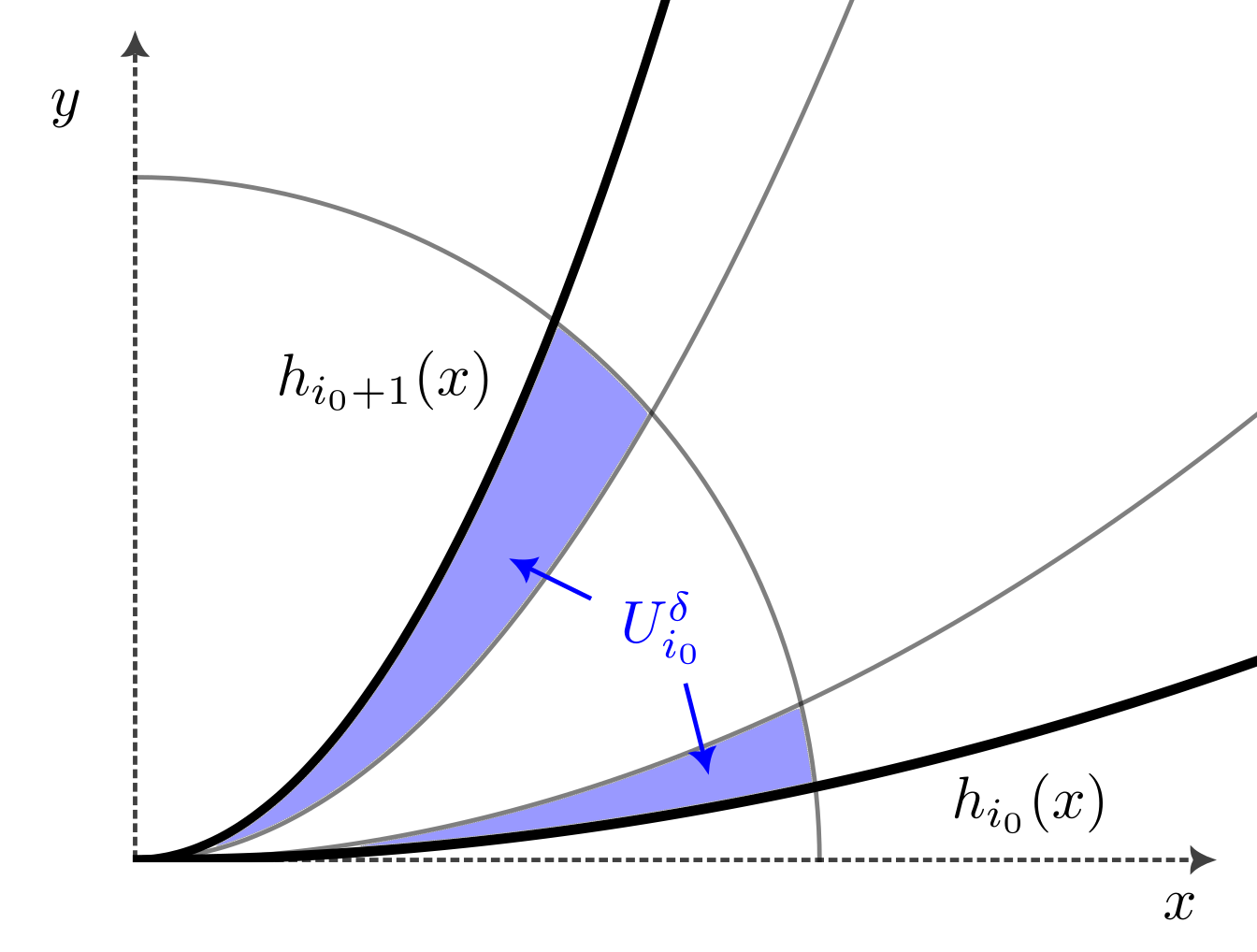}\end{center}
\caption{The region $U_{i_0}^\delta$ when the boundary curves $h_{i_0}(x)$ and $h_{i_0+1}(x)$ are quadratic polynomials.}\label{fig:ui0}
\end{figure}

Note that for $\ell_{i_0} \coloneqq \min \{ k_{i_0}, k_{i_0+1} \}$ and $(x,y) \in U_{i_0}$, we have $\abs{y} \lesssim \abs{x}^{\ell_{i_0}}$, i.e.~$\abs{y}\leq C \abs{x}^{\ell_{i_0}}$ for some constant $C>0$. In addition, from \eqref{eqn:gi_ser}, we see that for $i \ne i_0$ and such $(x,y)$,
\begin{align}
\abs{g_i(x,y)} \lesssim \abs{x}^{\min\{ k_i, \ell_{i_0}\}} \label{eqn:gi_ubd}
\end{align}
and moreover,
\begin{equation}\label{eqn:G_ubd}
\abs{g_{i_0}(x,y)}  \lesssim \abs{x}^{\ell_{i_0}}, \; \abs{G(x,y)} \lesssim \abs{x}^{\sum_{i=1}^m \min\{k_i, \ell_{i_0}\}}.
\end{equation}
We can compute directly
\begin{align*}
\partial_y G & = \sum_{i=1}^m \partial_y g_i \prod_{j \ne i} g_j, \quad \partial_y^2 G  = \sum_{i=1}^m \partial_y^2 g_i \prod_{j \ne i} g_j + \sum_{i=1}^m \sum_{j \ne i} \partial_y g_i \partial_y g_j \prod_{k \ne i, j} g_j.
\end{align*}
Since $\nabla G$ and $\nabla^2 G$ are bounded in a neighborhood of $(0,0)$, it follows that for all $(x,y) \in U_{i_0}$,
\begin{align}
\abs{ \partial_y G (x,y)}  \lesssim \abs{x}^{\sum_{i=1}^m \min\{k_i, \ell_{i_0}\} -  \ell_{i_0}}\quad\text{and}\quad \abs{ \partial_y^2 G (x,y)} & \lesssim \abs{x}^{\sum_{i=1}^m \min\{k_i, \ell_{i_0}\} - 2 \ell_{i_0}} \label{eqn:g_yy}
\end{align}
by \eqref{eqn:gi_ubd} and the fact that $\min \{ k_i, \ell_{i_0}\} \le \ell_{i_0}$. Of course, similar estimates hold for $\partial_x \partial_y G$ and $\partial_x^2 G$, so we obtain $\abs{\nabla^2 G(x,y)} \lesssim \abs{x}^{\sum_{i=1}^m \min \{k_i, \ell_{i_0}\} - 2 \ell_{i_0}}$ in $U_{i_0}$.

Next we estimate $\abs{\nabla G(x,y)}$ from below in $U_{i_0}^\delta$. From our computation of $\partial_y G$,
\begin{align*}
\partial_y G(x, h_{i_0}(x)) & = \partial_y g_{i_0}(x, h_{i_0}(x)) \prod_{i \ne i_0} g_i(x, h_{i_0}(x)).
\end{align*}
Recalling that $\partial_y g_i(0,0) \ne 0$, then for $ \tau$ chosen sufficiently small, we have
\begin{align}
\abs{ \partial_y G(x, h_{i_0}(x))} \gtrsim \prod_{i \ne i_0} \abs{g_i(x, h_{i_0}(x))}. \label{eqn:grad_G_lbd}
\end{align}
We may estimate from below for $i \ne i_0$,
\begin{equation}\label{eqn:gi_lbd}
\begin{split}
\abs{g_i(x, h_{i_0}(x))} & = \abs{\int_0^1 \dfrac{d}{ds} \left( g_i(x, sh_{i_0}(x) + (1-s) h_i(x)) \right) \; ds }  \\
& = \abs{\left(h_{i_0}(x) - h_i(x) \right)  \int_0^1 \partial_y g_i(x, sh_{i_0}(x) + (1-s) h_i(x)) \; ds}  \\
& \gtrsim \abs{h_{i_0}(x) - h_i(x)}  \gtrsim \abs{x}^{\min\{ k_i, \ell_{i_0}\}},
\end{split}
\end{equation}
where in the second to last inequality, we again used that $\partial_y g_i(0,0) \ne 0$ (and take $\tau$ small), and in the last, we are simply using the definition of the $k_i$ and the expansions of the $h_i$. In conjunction with \eqref{eqn:grad_G_lbd}, we see that
\begin{align*}
\abs{\partial_y G(x, h_{i_0}(x))} \gtrsim \abs{x}^{\sum_{i \ne i_0} \min \{k_i, \ell_{i_0} \}},
\end{align*}
as long as $\tau$ is taken sufficiently small, and similarly for $\abs{\partial_y G(x, h_{i_0+1}(x))}$. Along with \eqref{eqn:g_yy}, this gives by the mean value theorem that
\begin{equation}\label{eqn:grad_G_lbd2}
\begin{split}
\abs{\nabla G(x, y)} \ge \abs{\partial_y G(x,y)} & \gtrsim \abs{x}^{\sum_{i =1}^m \min \{k_i, \ell_{i_0} \} - \ell_{i_0}}   =  \abs{x}^{\sum_{i \ne i_0}^m \min \{k_i, \ell_{i_0} \}}
\end{split}
\end{equation}
when $(x,y) \in U_{i_0}^\delta$ provided that $\delta$ is chosen small enough.

Set $\ell^* = \sum_{i=1}^m \min\{k_i, \ell_{i_0}\}$ for convenience. By \eqref{eqn:G_ubd}, we have that, as long as $\lambda >0$ is chosen sufficiently small (but not depending on $\epsilon$), $G - \epsilon F \ne 0$ in $B_{(\lambda \epsilon)^{1/\ell^*}}(0)$, since $F \gtrsim 1$ there while $\abs{G} \lesssim \lambda \epsilon$. Now if $(x,y) \in U_{i_0}^\delta$, and $(x,y) \not \in B_{(\lambda \epsilon)^{1/\ell^*}}(0)$, then by estimate \eqref{eqn:grad_G_lbd2},
\begin{align*}
\abs{\partial_y G(x,y)}  \gtrsim \abs{x}^{\ell^* - \ell_{i_0}}   \gtrsim (\lambda \epsilon)^{1 - \ell_{i_0}/\ell^*},
\end{align*}
since $(x,y) \in U_{i_0}$ implies $\abs{y} \lesssim \abs{x}^{\ell_{i_0}}$ with $\ell_{i_0} \ge 1$, so $\abs{x} \simeq \abs{(x,y)}$ there. Since $F$ is $C^1$ in a neighborhood of the origin, then as long as $\tau$ and $\epsilon_0$ are chosen sufficiently small, then $\epsilon \norm{\nabla F}_{L^\infty(B_{r/2}(0))} \le \abs{ \partial_y G(x,y)}/8$ for such points, and thus in $U_{i_0}^\delta \setminus B_{(\lambda \epsilon)^{1/\ell^*}}(0) $,
\begin{align}
\abs{\partial_y( G - \epsilon  F)}  \ge \abs{\partial_y G} - \epsilon \abs{\nabla F}  \ge (7/8) \abs{\partial_y G} > 0. \label{eqn:grad_H_lbd}
\end{align}
In particular, the implicit function theorem implies that locally near any zero $(x, y)$ of $G - \epsilon F$ in $U_{i_0}^\delta$, the set $\{G - \epsilon F = 0\}$ is the graph of a $C^1$ function over the $x$-axis.

For $(x,y) \in U_{i_0} \setminus U_{i_0}^{\delta/2}$, we apply Lemma \ref{lem:z_pert_1}. In particular, for $t \in [\delta/2, 1- \delta/2]$, write $y_t(x) \coloneqq t h_{i_0}(x) + (1-t) h_{i_0+1}(x)$,
\begin{align*}
g_t(x)  \coloneqq G(x, y_t(x)) \quad\text{and}\quad f_t(x)  \coloneqq F(x, y_t(x)).
\end{align*}
Since $h_{i_0}$, $h_{i_0+1}$, and $G$ are real-analytic in a neighborhood of the origin, so is $g_t$, and similarly, $f_t$ is Lipschitz since $F$ is (with Lipschitz constant independent of $t$). The fact that $G(0,0) = h_{i_0}(0) = h_{i_0+1}(0) = 0$ and $\nabla G(0,0) =0$ forces the Taylor expansion of $g_t$ at the origin to have leading term $a(t) x^{k(t)}$ for some $k(t) \ge 2$ and $a(t) >0$, since $g_t$ is a non-trivial, non-negative analytic function that is positive in $\{x >0\}$. (Recall that $U_{i_0}$ is a positive chamber for $G$.) In fact, $k(t) \equiv \ell^*$ for all such $t$, as can be seen from the estimates \eqref{eqn:G_ubd} and an estimate similar to \eqref{eqn:gi_lbd}, from which one deduces that $\abs{G(x, y_t(x))} \simeq_t x^{\ell^*}$ for all $x$ sufficiently small, which forces $k(t) = \ell^*$. Thus, Lemma \ref{lem:z_pert_1} gives for each $t \in [\delta/2, 1 -\delta/2]$, some $\epsilon_0(t)$ and $\tau(t)$ so that the function $g_t - \epsilon f_t$ has exactly one root in $(0, \tau(t))$ for all $\epsilon < \epsilon_0(t)$. Since $t$ lives in a compact interval and $a(t)$ is continuous and strictly positive, then it attains its positive minimum and finite maximum in $[\delta/2, 1-\delta/2]$. In particular, $\epsilon(t)$ and $\tau(t)$ can be chosen to depend only on the maximum and minimum of $a(t)$. By Remark \ref{rmk:zero}, we can choose $\epsilon_0$ and $\tau$ small independently of $t$ so that satisfies the conclusion of Lemma \ref{lem:z_pert_1} for all $t \in [\delta/2, 1-\delta/2]$. Of course, $\epsilon_0$ and $\tau$ will depend on $\delta$, but this is a small, fixed quantity depending on $F$ and $G$.

We are in a position to conclude the proof. Our work so far shows that at any zero $z \in \nodal(G- \epsilon F) \cap U_{i_0}$, there is a neighborhood $V_z$ of $z$ such that $ \nodal(G - \epsilon F)\cap V_z $ coincides with a curve passing through $z$: in $U_{i_0}^\delta$ this is by the implicit function theorem and in $U_{i_0} \setminus U_{i_0}^{\delta/2}$ from the fact that the (unique) zeros of $G - \epsilon F$ along the trajectories $x \ra (x, y_t(x)) \in U_{i_0}$ indexed by $t \in [\delta/2, 1-\delta/2]$ form a continuous curve in $t$. Let $\gamma$ denote some connected component of $\nodal(G - \epsilon F) \cap \overline{U_{i_0}}$. If $\gamma$ meets $U_{i_0}^\delta$, define
\begin{align*}
x_{\mathrm{min}} = \inf \{ x >0 \; : \; (x,y) \in U_{i_0}^\delta \cap \gamma \text{ for some } y \},
\end{align*}
which exists since $G-\epsilon F \ne 0$ in $B_{(\lambda \epsilon)^{1/\ell^*}}(0)$. Choose a minimizing sequence $x_k \ra x_{\mathrm{min}}$ with corresponding points $y_k$ such that $(x_k, y_k) \in U_{i_0}^\delta \cap \gamma$, and note that we may assume (up to a subsequence) that $(x_k, y_k) \ra (x_{\mathrm{min}}, y_{\mathrm{min}})$ for some $y_{\mathrm{min}}$. Note that $(x_{\min}, y_{\min})$ is a zero of $G - \epsilon F$, and moreover, $y_{\min} = y_t(x_{\min})$ for either $t = \delta$ or $t = 1-\delta$. Indeed $G - \epsilon F$ is nonzero on the graphs of $h_{i_0}$ and $h_{i_0+1}$, and \eqref{eqn:grad_H_lbd} says that if $(x_{\min}, y_{\min}) \in \mathrm{int}( U_{i_0}^\delta)$, then locally near this point, $\nodal(G - \epsilon F)$ coincides with the graph of a $C^1$ function over the $x$-axis, which would contradict the definition of $x_{\min}$. By the remarks above, there is a neighborhood $V$ of $(x_{\min}, y_{\min})$ in which $\nodal{(G - \epsilon F)}$ coincides with a curve passing through $(x_{\min}, y_{\min})$, which therefore must coincide with $\gamma \cap V$. Hence $(x_{\min}, y_{\min}) \in \gamma \, \cap (U_{i_0} \setminus U_{i_0}^{\delta/2})$. We have proved that every connected component of $\nodal(G - \epsilon F) \cap \overline{U_{i_0}}$ meets $U_{i_0} \setminus U_{i_0}^{\delta/2}$.

Recall that $\nodal(G - \epsilon F) \cap (U_{i_0} \setminus U_{i_0}^{\delta/2})$ has exactly one connected component, given by the curve of zeros of $G - \epsilon F$ along the trajectories $x \ra (x, y_t(x))$. Since any connected component of $\nodal(G - \epsilon F) \cap \overline{U_{i_0}}$ meets this region, it follows that $\nodal(G - \epsilon F) \cap \overline{U_{i_0}}$ has exactly one connected component and we know that this component is a simple curve. We leave it to the reader to verify that the curve tends to $\{G = 0\}$ near the origin as $\epsilon \da 0$.
\end{proof}

\renewcommand{\baselinestretch}{1}

\bibliographystyle{amsbeta}
\bibliography{bibl}

\end{document}